\documentclass{amsart}
\usepackage{subfiles}
\usepackage[T1]{fontenc}
\usepackage[utf8x]{inputenc}
\usepackage[english]{babel}
\usepackage{amsmath,amsthm,amssymb, latexsym,fancyhdr,stackrel,geometry,enumerate,bbm, amsfonts, wasysym}
\usepackage{srcltx}
\usepackage[left,modulo, pagewise]{lineno}
\usepackage{newclude}
\usepackage{tikzsymbols}
\usepackage{pgf,tikz}
\usepackage{tikz-cd}
\usepackage{subfig}
\usepackage{graphicx}
\usetikzlibrary{shapes}
\usetikzlibrary{cd}
\usetikzlibrary{arrows}
\usetikzlibrary{topaths}
\usetikzlibrary{decorations.pathmorphing}
\usetikzlibrary{shadows.blur}

\date{\today}
\swapnumbers
\theoremstyle{plain}

\newtheorem*{Thm}{Theorem}
\newtheorem*{Lem}{Lemma}
\newtheorem*{Cor}{Corollary}
\newtheorem*{Prop}{Proposition}

\theoremstyle{definition}

\newtheorem*{Rmk}{Remark}
\newtheorem*{Ex}{Example}
\newtheorem*{De}{Definition}
\usepackage[textwidth=2.5cm, textsize=small, colorinlistoftodos]{todonotes}

\newcommand{\K}{\Bbbk}
\newcommand{\kQ}{\K Q}

\renewcommand{\a}{\alpha}
\renewcommand{\b}{\beta}
\renewcommand{\c}{\gamma}
\renewcommand{\d}{ \delta}
\newcommand{\e}{\varepsilon}
\renewcommand{\j}{\theta}
\renewcommand{\k}{\kappa}
\renewcommand{\l}{\lambda}
\newcommand{\s}{\sigma}
\newcommand{\f}{\varphi}

\newcommand{\Dd}{\partial}

\newcommand{\R}{\mathcal{R}}
\newcommand{\C}{\mathcal{C}}

\newcommand{\mor}[3]{$#1\colon #2 \to #3$}

\renewcommand{\dim}[1]{\operatorname{dim}_{\Bbbk} #1 }

\newcommand{\Hom}[3]{\operatorname{Hom}_{#1}(#2,\,#3)}
\newcommand{\Ext}[4]{\operatorname{Ext}_{#1}^{#2}(#3,\,#4)}
\newcommand{\End}[2]{\operatorname{End}_{#1}(#2)}

\newcommand{\Ho}[3]{\operatorname{H}^{#1}(#2,\, #3)}
\newcommand{\HH}[2]{\operatorname{HH}^{#1}(#2)}
\newcommand{\Der}[3]{\operatorname{Der}_{#1}(#2,\, #3)}
\newcommand{\Inn}[2]{\operatorname{Inn}(#1,\, #2)}

\newcommand{\aQ}{\approx_Q}




\newenvironment{enuma}{\begin{enumerate}[$(a)$]}{\end{enumerate}}

\dedicatory{Dedicated to José Antonio de la Peña for his 60th birthday}
\title[Potential and derivations of cluster tilted algebras]{From the potential to the first Hochschild cohomology group of a cluster tilted algebra}

\author[I. Assem]{Ibrahim Assem}
\address{D\'epartement de Math\'ematiques, Universit\'e
  de Sherbrooke, Sherbrooke, Qu\'ebec, Canada}
\email{ibrahim.assem@USherbrooke.ca}

\author[J.C. Bustamante]{Juan Carlos Bustamante}
\address{Departament of Mathematics, Champlain College, Sherbrooke, Québec, Canada}
\email{jcbustamabte@crc-lennox.qc.ca}

\author[S. Trepode]{Sonia Trepode}
\address{CEMIM, FCEYN, Universidad Nacional de Mar del Plata. CONICET,  Argentina}
\email{strepode@mdp.edu.ar}

\author[Y. Valdivieso]{Yadira Valdivieso}
\address{Department of Mathematics. Univesity of Leicester. Leicester. LE1 7RH, U.K.}
\email{yvd1@leicester.ac.uk}

\begin{document}
\maketitle
\begin{abstract}
  The objective of this paper is to give a concrete interpretation of the dimension of the first Hochschild cohomology space of a cyclically oriented or tame cluster tilted algebra in terms of a numerical invariant arising from the potential.
\end{abstract}


\section*{Introduction}

In this paper, we present a concrete computation of the first Hochschild cohomology group of cyclically oriented and tame cluster tilted algebras. Cluster tilted algebras were introduced in  \cite{BMR07} and independently in \cite{CSS06} for type $\mathbb{A}$, as an application of the categorification of the cluster algebras of Fomin and Zelevinsky, see \cite{BMRRT06}. Cyclically oriented cluster tilted algebras were defined in \cite{BT13} where it is shown to be the largest known class of cluster tilted algebras for which a system of minimal relations, in the sense of \cite{BMR06}, is known.

Our motivation is twofold. First, it can be argued that any (co)-homology theory has for object to detect and even compute cycles. In cluster tilted algebras, there are cycles which naturally occur : indeed, any cluster tilted algebra can be represented as the jacobian algebra of a quiver with potential, the latter being a linear combination of cycles in its quiver \cite{K11}. We are interested here in the relation  between the cycles appearing in the potential and the first Hochschild cohomology group of the given cluster tilted algebra. Our second motivation, more \emph{ad hoc}, comes from the very simple formula given in \cite[Theorem 1.2]{ARS15}, for a representation-finite cluster tilted algebra, allowing to read the dimension of the first Hochschild cohomology space directly in the ordinary quiver of the algebra. It is natural to ask for which class of (cluster tilted) algebras does this dimension depend only on the quiver.  Because representation-finite cluster tilted algebras are cyclically oriented, the latter class is a natural candidate. However, cyclically oriented cluster tilted algebras are generally representation-infinite, they may be tame or wild. For tame, not necessarily cyclically oriented cluster tilted algebras, the first Hochschild cohomology group has been studied in \cite{ARS15}. We therefore investigate this class as well.

We first recall from \cite{ABS08} that an algebra $B$ is \emph{cluster tilted} if and only if there exists a tilted algebra $C$ such that $B$ is isomorphic to the trivial extension of $C$ by the $C-C$-bimodule $E=\Ext{C}{2}{DC}{C}$, we then say that $B$ is the \emph{relation extension} of $C$. As seen in \cite{ABDLS-19}, there exists an equivalence relation on the cycles of the Keller potential $W$ of $B$. The number $N_W$ of equivalence classes is called the \emph{potential invariant} of $B$. If $B$ is cyclically oriented, then it follows from \cite{BT13} that $W$, and therefore $N_W$, only depend on the ordinary quiver of $B$. This does not hold true if $B$ is a tame cluster tilted algebra. However, our main result says that if $B$ is a cluster tilted algebra which is cyclically oriented or tame of type $\tilde{\mathbb{D}}$ or $\tilde{\mathbb{E}}$, then the potential invariant $N_W$ equals the dimension of the first Hochschild space of $B$. Moreover $E$ viewed as a $C-C$-bimodule has  $N_W$ indecomposable summands which are pairwise orthogonal bricks so that $N_W$ equals also the dimension of the endomorphism algebra of the bimodule $_CE_C$. We thus state our theorem

\subsection*{Theorem A}{\it Let $B$ a cyclically oriented cluster tilted algebra or a cluster tilted algebra of type $\tilde{\mathbb{D}}$ or\   $\tilde{\mathbb{E}}$, having $N_W$ as potential invariant,  $C$ a tilted algebra such that $B$ is the relation extension of $C$ by $E = \Ext{C}{2}{DC}{C}$. Then we have
  \[ \dim{\HH{1}{B}} = N_W = \dim{\End{C-C}{E}}.\]
  Moreover, the indecomposable summands of $_CE_C$ are pairwise orthogonal bricks.}

\medskip

For a cluster tilted algebra of type $\tilde{\mathbb{A}}$, the situation is slightly different: in this case, the dimension of $\HH{1}{B}$ equals $N_W + \epsilon$, where $\epsilon =3$ if $B$ contains a double arrow, $\epsilon = 2$ if it contains a proper bypass, and $\epsilon =1$ otherwise, see \cite{ARS15, AR09}  or \ref{subsec:TameCase}, below. Our theorem shows that, if $B$ is a cyclically oriented tame cluster tilted algebra, then the dimension of $\HH{1}{B}$ depends only on the quiver of $B$. The formula of \cite[Theorem 1.2]{ARS15} in the representation-finite case is a special case of our theorem. Furthermore, the last statement proves conjecture 4.6 of \cite{ABIS13} for tilted algebras whose relation extensions are cyclically oriented. Another nice consequence of the theorem is that, in this case, the number of indecomposable summands of the bimodule $_CE_C$, and the dimension of $\End{C-C}{E}$ do not depend on $C$, but only on the quiver of $B$.

We also present another point of view. It is known that for gentle algebras, a geometric model allows to compute the Hochschild cohomology group \cite{CSS18, Val15}. Of course, not all the cluster-tilted algebras we are dealing with arise from marked surfaces. This is however the case for cluster-tilted algebras of type $\mathbb D$, which are representation finite and hence cyclically oriented, and those of type $\widetilde{\mathbb D}$, which are tame and not always cyclically oriented. For the notation below, we refer the reader to section 4. We prove the following Theorem.

\subsection*{Theorem B}{\it Let $B$ be a cluster tilted algebra of type $\mathbb D_n$ or $\widetilde{\mathbb D_n}$ and $(S,\tau)$ its geometric realization. Then $$\operatorname{dim}_k \operatorname{HH}^1(B)= \mid \triangle_{\textrm{NRel(p,q)}}\mid +m_p +m_q-m_{p,q}$$
where $\triangle_{\textrm{NRel(p,q)}}$ is the subset of internal non-self-folded triangles which are not related to the puncture $p$ or $q$ and $m_{p,q}$ is equal to one if and only if there exist a triangle $\triangle\in \triangle_{\textrm{Rel(p)}}\cap \triangle_{\textrm{Rel(q)}}$}

\medskip

Recall that the geometric model associated with cluster tilted algebras of type $\mathbb D$ and $\widetilde{\mathbb D}$ involves punctures. One of the features of this theorem is that the first Hochschild cohomology group depends on the triangles incident to punctures.

On the other hand, we also give a geometric description of the notion of admissible cuts of \cite{BFPPT10}, which complete the previous description of \cite{DRS12, ALFP16}.

The paper is organised as follows. Section 1 is devoted to preliminaries, Section 2 to cluster tilted algebras, Section 3 to the proof or our Theorem A, and section 4 to the proof of Theorem B. 



\section{Preliminaries}
\subsection{Notation}\label{subsec:notation} Let $\K$ be an algebraically closed field. It is well-known that any basic and connected finite dimensional $\K-$algebra $C$ can be written as $C\simeq \kQ / I$, with $Q$ a finite connected quiver and $I$ an admissible ideal of $\kQ$. The pair $(Q,I)$ is then called a \emph{bound quiver}, and the isomorphism $C\simeq \K Q/ I$ is a \emph{presentation} of $C$ see \cite{ASS06}. We denote by $Q_0$ the set of points of $Q$ and by $Q_1$ its set of arrows. Following \cite{BG82}, we sometimes consider an algebra $C$ as a category of which the object class $C_0$ is $Q_0$ and the set of morphisms from $x$ to $y$ is $C(x,y) = e_xC e_y$, where $e_x, e_y$ are the primitive idempotents of $C$ associated to $x$ and $y$ respectively. A full subcategory $D$ of $C$ is \emph{convex} if for any $x,y\in D_0$, and any path $x=x_0\to x_1\to\cdots \to x_t = y$ in the category $C$, we have $x_i\in D_0$ for all $i$. An algebra $C$ is  \emph{constricted} if for any arrow \mor{\a}{x}{y} we have $\dim{C(x,y)} =1$.

A \emph{relation} from $x\in Q_0$ to $y\in Q_0$ is a linear combination $\rho = \sum_{i=1}^m \l_i w_i$ where each $\l_i$ is a nonzero scalar and the $w_i$ are pairwise different paths in $Q$ of length at least two from $x$ to $y$. If $C\simeq \K Q/I$, then the ideal $I$ is always generated by finitely many relations. A \emph{system of relations} for a bound quiver algebra $C\simeq \K Q/I$ is a subset $\R$ of $I$ consisting of relations such that $\R$ but no proper subset of it generates $I$ as an ideal \cite{Bo83}. A relation  $\rho = \sum_{i=1}^m \l_i w_i$ is \emph{monomial} if $m=1$ and \emph{minimal} if, for every nonempty proper subset $J$ of $\{1,\ldots,m\}$ we have  $\rho = \sum_{i\in J} \l_i w_i \not\in I$. It is \emph{strongly minimal} if, for any subset $J$ as above and every set of nonzero scalars $\{\l'_j| j \in J\}$, we have  $\rho = \sum_{j\in J}\l'_j w'_j\not\in I$. We have the following lemma.

\begin{Lem}\cite[(1.2)]{ARS15} Let $C = \K Q/I$with $I$ generated by a system of relations $\R= \{\rho_1,\ldots,\rho_t\}$. If each $\rho_i$ is a linear combination of paths that do not contain oriented cycles, then $\R$ can be replaced by a system of strongly minimal relations $\R' = \{ \rho'_1,\ldots, \rho'_t\}$ such that each $\rho'_i$ is a linear combination of the paths appearing in $\rho_i$.
\end{Lem}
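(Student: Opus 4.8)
The plan is to induct on the total number of paths appearing in the relations of $\R$, treating one relation $\rho_i$ at a time while keeping the ideal it generates unchanged. Fix $i$ and write $\rho_i = \sum_{j=1}^{m} \l_j w_j$, where the $w_j$ are pairwise distinct paths from some $x$ to some $y$, none containing an oriented cycle. I would first dispose of the case $m=1$: a single path $w_1$ is automatically strongly minimal, since the only nonempty proper subset of $\{1\}$ is empty, so there is nothing to check and we set $\rho_i' = \rho_i$. So assume $m \geq 2$ and that $\rho_i$ fails to be strongly minimal, i.e.\ there is a nonempty proper $J \subsetneq \{1,\dots,m\}$ and nonzero scalars $\{\l_j' \mid j \in J\}$ with $\sigma := \sum_{j\in J}\l_j' w_j \in I$.

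The key step is to use $\sigma$ to simplify $\rho_i$. Choose some index $j_0 \in J$; after rescaling $\sigma$ we may assume its coefficient at $w_{j_0}$ equals $\l_{j_0}$, the coefficient of $w_{j_0}$ in $\rho_i$. Then $\rho_i - \sigma$ is again an element of $I$ (since $\sigma \in I$), it is a linear combination of a proper subset of the paths $\{w_1,\dots,w_m\}$ — the path $w_{j_0}$ has been cancelled — and none of its paths contain oriented cycles. Replacing $\rho_i$ by $\rho_i' := \rho_i - \sigma$ in $\R$ yields a generating set of the same ideal $I$ with strictly fewer total paths. Now I would argue that this new set is still a \emph{system} of relations in the sense of \cite{Bo83}, i.e.\ that no proper subset generates $I$: indeed, modulo $I' := \langle \rho_1,\dots,\widehat{\rho_i},\dots,\rho_t\rangle$ we have $\rho_i \equiv \rho_i'$ because $\sigma \in I \subseteq$ ... — here one must be slightly careful, since $\sigma$ lies in $I$ but perhaps not in $I'$; the cleaner route is to observe that each replacement step preserves the ideal, and then appeal to the hypothesis that the \emph{original} $\R$ was irredundant together with the fact that we only ever subtract elements of $I$, so redundancy cannot be created. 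Iterating, after finitely many steps (the total path count strictly decreases and is a nonnegative integer) every $\rho_i$ becomes strongly minimal, and by construction each resulting $\rho_i'$ is a linear combination of paths that already appeared in $\rho_i$.

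The main obstacle I anticipate is the bookkeeping around the word ``system'': one must ensure the replacement process does not accidentally make one of the relations redundant (so that the new $\R'$ still has no proper generating subset) and that strong minimality, once achieved for $\rho_i$, is not destroyed when later relations $\rho_{i'}$ ($i' \neq i$) are modified. The hypothesis that all paths avoid oriented cycles is what makes the termination argument work — it guarantees that only finitely many paths from $x$ to $y$ exist, so the procedure of repeatedly cancelling a path cannot loop, and it also prevents pathologies where subtracting an element of $I$ introduces genuinely new paths outside the original support. A careful statement would phrase the induction on the pair (number of relations not yet known to be strongly minimal, total number of paths in those relations) ordered lexicographically, which cleanly handles both the per-relation reduction and the interaction between relations.
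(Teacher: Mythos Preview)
The paper does not supply its own proof of this lemma: it is quoted verbatim from \cite[(1.2)]{ARS15} as a preliminary fact and no argument is given. There is therefore nothing in the present paper to compare your proposal against.

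For what it is worth, your reduction strategy --- while $\rho_i$ is not strongly minimal, pick a witnessing element $\sigma\in I$ supported on a proper subset of its paths, rescale so as to cancel one path, and replace $\rho_i$ by $\rho_i-\sigma$ --- is exactly the standard argument one expects for this kind of statement, and is along the lines of what appears in the cited source. Two small remarks. First, termination is already forced by the strict decrease in the number of paths of $\rho_i$, so you do not need the absence of oriented cycles for that; the acyclicity hypothesis is rather what guarantees that distinct parallel paths stay linearly independent modulo the radical filtration, so that ``strongly minimal'' is a meaningful, stable notion and the process cannot reintroduce a cancelled path through some hidden relation. Second, your concern about preserving irredundancy of the generating set is legitimate and your sketch there is indeed hand-wavy; the clean way to handle it is to note that the replacement $\rho_i \mapsto \rho_i' = \rho_i - \sigma$ is invertible over the two-sided ideal (adding back $\sigma\in I$ recovers $\rho_i$), so the two $t$-element sets generate the same ideal and one is irredundant if and only if the other is.
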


Two paths $u,v$ in a quiver $Q$ are \emph{parallel} if the have they same source and the same target, and \emph{antiparallel} if the source (or the target) of $u$ is the target (or the source, respectively) of $v$. A quiver is \emph{acyclic} if it contains no oriented cycles. Algebras with acyclic quivers are called \emph{triangular}. For more notions or results of representation theory, we refer the reader to \cite{ASS06}.

\subsection{Hochschild cohomology} Let $C$ be an algebra and $E$ a $C-C$-bimodule which is finite dimensional over $\Bbbk$. The \emph{Hochschild complex} is the complex

\[
  \begin{tikzcd}
    0 \arrow[r] &E\arrow[r, "b^1"] & \Hom{\Bbbk}{C}{E}\arrow[r, "b^2"]& \cdots \arrow[r] &\Hom{\Bbbk}{C^{\otimes i}}{E} \arrow[r,  "b^{i+1}"] &\Hom{\Bbbk}{C^{\otimes i+1}}{E}\arrow[r, "b^{i+2}"]&\cdots
  \end{tikzcd}
\]

where $C^{\otimes i}$ is defined inductively by $C^{\otimes 1} = C$ and $C^{\otimes i } = C^{\otimes (i-1)} \otimes_\Bbbk C$  for $i>1$. The map $b^1 : E \to \Hom{\Bbbk}{C}{E}$ is defined by $(b^1x)(c) =cx - xc$ for $x\in E,\ c\in C$, and $b^{i+1}$ is defined by
\begin{align*}
  (b^{i+1}f)(c_0\otimes \cdots \otimes c_i) & = c_0 f(c_1 \otimes\cdots\otimes c_i)+ \sum_{j=1}^{i}(-1)^j f(c_0\otimes \cdots \otimes c_{j-1}c_j \otimes \cdots \otimes c_i) \\
                                            & + (-1)^{i+1} f(c_0\otimes \cdots \otimes c_{i-1})c_i
\end{align*}
for a $\Bbbk$-linear map $f:C^{\otimes i} \to E$ and elements $c_0,\ldots, c_i\in C$.

The $i^{\rm th}$ cohomology group of this complex is the $i^{\rm th}$ \emph{Hochschild cohomology group of $C$} with coefficients in $E$, denoted $\Ho{i}{C}{E}$. If $_CE_C = _CC_C$ we write $\HH{i}{C}$ instead of $\Ho{i}{C}{C}$. For instance, $\HH{0}{C}$ is the centre of the algebra $C$.

Let $\Der{}{C}{E}$ be the vector space of all \emph{derivations}, that is, $\Bbbk$-linear maps $d : C\to E$ such that for any $c,c'\in C$ we have \[d(cc') = cd(c') + d(c)c'.\]

A derivation is \emph{inner} if there exists $x\in E$ such that $d(c) = cx-xc$ for any $c\in C$. Letting $\Inn{C}{E}$ denote the subspace of $\Der{}{C}{E}$ consisting of all inner derivations, we have
\[\Ho{1}{C}{E} = \frac{\Der{}{C}{E}}{\Inn{C}{E}}.\]

Further, given a complete set of primitive orthogonal idempotents $\{e_1,\ldots,e_n\}$ of $C$, a derivation $d : C\to E$ is called \emph{normalised} if $d(e_i) =0$ for all $i$. Let $\Der{0}{C}{E}$ be  the subspace of $\Der{}{C}{E}$ consisting of the normalised derivations, and ${\rm Inn}_0(C,E) = \Der{0}{C}{E} \cap \Inn{C}{E}$. Then we also have (see \cite{Hap89}).

\[\Ho{1}{C}{E} = \frac{\Der{0}{C}{E}}{{\rm Inn}_0(C,E)}.\]

When we deal with a derivation, we may always assume implicitly that it is normalised.

Finally, we recall that an algebra $C$ is \emph{simply connected} if it is triangular and every presentation of $C$ as a bound quiver algebra has a trivial fundamental group. It is \emph{strongly simply connected} if for any full convex subcategory $D$ of $C$ one has $\HH{1}{D}  = 0$. This is equivalent to requiring that any full convex subcategory of $C$ is simply connected \cite{Skow92}.



\section{Cluster tilted algebras}

\subsection{Bound quivers}\label{subsec:BoundQuivers} Cluster tilted algebras were originally defined as endomorphism rings of tilting objects in the cluster category \cite{BMR07}. We use the following equivalent definition \cite{ABS08}. Let $C$ be a triangular algebra of global dimension at most two. Its trivial extension $\tilde{C} = C\ltimes E$ by the so-called \emph{relation bimodule} $E= \Ext{C}{2}{DC}{C}$ with the natural $C$-actions, is called the \emph{relation extension} of $C$. If $C$ is tilted of type $Q$, then $\tilde{C}$ is called \emph{cluster tilted of type $Q$}. Assume $C = \Bbbk Q/I$ and $\R = \{ \rho_1,\ldots, \rho_k\}$ is a system of relations for $I$, then the quiver $\tilde{Q}$ of $\tilde{C}$ is as follows:

\begin{enumerate}
  \item[(a)] $\tilde{Q}_0 = Q_0$,
  \item[(b)] For $x,y\in Q_0$, the set of arrows in $\tilde{Q}$ from $x$ to $y$ equals the set of arrows of $Q$ from $x$ to $y$, called \emph{old arrows} plus, for each relation $\rho_i$ from $y$ to $x$, an additional arrow $\a_i : x\to y$ (called \emph{new arrow}), see \cite[Theorem 2.6]{ABS08}.
\end{enumerate}

A \emph{potential} on a quiver is a linear combination of oriented cycles in the quiver. The \emph{Keller potential} on $\tilde{Q}$ is the sum
\[W = \sum_{i=1}^k \a_i \rho_i\]
where $\a_i,\ \rho_i$ are as in (b), above. Oriented cycles are considered up to cyclic  permutation : two potentials are \emph{cyclically equivalent} if their difference lies in the $\Bbbk$-vector space generated by all elements of the form $\b_1\b_2\cdots \b_m - \b_m\b_1\cdots \b_{m-1}$, where $\b_1\ldots\b_m$ is an oriented cycle in $\tilde{Q}$. For an arrow $\b$, the \emph{cyclic partial derivative} $\Dd_\b$  is the $\Bbbk$-linear map defined on  an oriented cycle $\b_1\b_2\ldots\b_m$ by

\[\Dd_\b(\b_1\b_2\ldots\b_m) = \sum_{\b = \b_i}\b_{i+1} \cdots \b_{m} \b_1 \cdots \b_{i-1}\]
and extended by linearity to $W$. Thus $\Dd_\b(W)$ is invariant under cyclic permutations. The \emph{jacobian algebra} $\mathcal{J}(\tilde{Q},W)$ is the one given by the quiver $\tilde{Q}$ bound by all cyclic partial derivatives of the Keller potential with respect to each arrow of $\tilde{Q}$. If $C$ is a tilted algebra, so that $\tilde{C} = C\ltimes E$ is cluster tilted, then $\tilde{C}\simeq \mathcal{J}(\tilde{Q},W)$, see, for instance, \cite{K11}.

Clearly, the Keller potential $W$ depends on the relations, and thus on the presentation of the algebra. For instance, if $C$ is given by the quiver $Q$
\[\begin{tikzcd}[row sep = normal, column sep = normal]
    1 \arrow[r,"\a"] &  2\arrow[r, "\b", shift  left = 0.5ex] \arrow[r, "\c"', shift right=0.5ex] & 3\\
  \end{tikzcd} \]
and we consider the two sided ideals $I_1 = \langle \a \b \rangle$ and $I_2 = \langle \a\b - \a \c\rangle$, then $\K Q/I_1 \simeq \K Q/I_2$. In the first case $\tilde{C}$ is given by the quiver $\tilde{Q}$

\[\begin{tikzcd}[row sep = normal, column sep = normal]
    1 \arrow[r,"\a"] &  2\arrow[r, "\b", shift  left = 0.5ex] \arrow[r, "\c"', shift right=0.5ex] & 3 \arrow[ll, "\eta"', bend right = 45]\\
  \end{tikzcd} \]

with potential $W_1 = \a \b \eta$, and in the second case by the same quiver $\tilde{Q}$ but with potential $W_2 = \a \b \eta - \a \c \eta$.

\subsection{Sequential walks} Let $C = \Bbbk Q/I$ be a tilted algebra and $w = uw'v$  a reduced walk in $Q$. We say that the subwalks $u,v$ \emph{point to the same direction} if both $u,v$ or both $u^{-1}, v^{-1}$ are paths in $Q$. A reduced walk $w=uw'v$ with $u,v$ pointing to the same direction is a \emph{sequential walk} if there is a relation $\rho = \sum_i \l_i u_i$ with $u=u_1$ or $u=u_1^{-1}$, there is a relation $\sigma = \sum_j \mu_jv_j$ with $v=v_1$ or $v=v_1^{-1}$ respectively, and no subpath of $w'$ or $(w')^{-1}$ is involved in (is a branch of) a relation  $\nu = \sum_l \xi_l w_l$.

Let $\tilde{C}$ be the relation extension of $C$. A walk $w=\a w'\b$ is \emph{$C-$sequential} if $w'$ consists of old arrows, $\a,\b$ are new arrows corresponding to old relations $\rho = \sum_i \l_i u_i$ and $\sigma = \sum_j \mu_j v_j$ such that for any $i,j$ the walk $u_iw'v_j$ is sequential. We need essentially the following result.

\begin{Lem}\cite[(2.5)]{ARS15}\label{lem:seq-walks} Let $C$ be a tilted algebra. Then the bound quiver of its relation extension $\tilde{C}$ contains no $C-$sequential walk.
\end{Lem}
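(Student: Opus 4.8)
The plan is to argue by contradiction, exploiting that the relation extension is a \emph{trivial} extension. Suppose the bound quiver $(\tilde Q,\tilde I)$ of $\tilde C$ contained a $C$-sequential walk $w=\a w'\b$, where $\a,\b$ are new arrows attached to relations $\rho=\sum_i\l_i u_i$ and $\sigma=\sum_j\mu_j v_j$ of the fixed system, $w'$ is a path of old arrows, and every walk $u_iw'v_j$ is sequential. The first move is the structural observation that, since $\tilde C=C\ltimes E$, the relation bimodule $E$ is the two-sided ideal of $\tilde C$ generated by the new arrows and satisfies $E^2=0$; hence $\a w'\b\in E\tilde CE\subseteq E^2=0$ in $\tilde C$, that is, $\a w'\b$ lies in the ideal $\tilde I=\langle\Dd_\c W:\c\in\tilde Q_1\rangle$.

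Next I would make the generators of $\tilde I$ explicit. The cyclic derivative of $W$ with respect to a new arrow $\a_i$ is the old relation $\rho_i$, a linear combination of paths of old arrows of length at least two; the derivative with respect to an old arrow $\c$ is a linear combination of paths of the form $v\,\a_l\,u$, one for each branch $u\c v$ of each relation $\rho_l$ in which $\c$ occurs (with $\a_l$ the new arrow attached to $\rho_l$), and in particular each such path carries exactly one new arrow. Writing $\a w'\b=\sum_k c_k\,p_kg_kq_k$ with the $g_k$ among these generators and the $p_k,q_k$ paths, and using that distinct paths are linearly independent in $\Bbbk\tilde Q$, I obtain that the path $\a w'\b$ coincides with some summand $p\,s\,q$, where $s$ is one of the paths occurring in a generator $g$.

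The heart of the matter is then a short case analysis on $g$. If $g$ is an old relation, then $s$ is one of its branches, a path of old arrows of length at least two; since $\a,\b$ are the only new arrows of $\a w'\b$ and occupy its two ends, $p$ must begin with $\a$ and $q$ must end with $\b$, which forces $s$ to be a subpath of $w'$ --- and then $w'$ contains a branch of a relation, contradicting the definition of a sequential walk. If instead $g=\Dd_\c W$, then, as $\a w'\b$ has a new arrow only as its first and last letter, the single new arrow of $s$ must be $\a$ or $\b$; say it is $\a$. Then $\rho_l=\rho$, and comparing $p\,s\,q$ with $\a w'\b$ forces $p$ to be trivial and $s=\a u$ with $u\c$ a branch of $\rho$; hence $u_i=u\c$ for some $i$, while $u$ is a nonempty prefix of $w'$. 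Consequently $u_i^{-1}$ begins by retracing that prefix, so the walk $u_iw'v_j$ is not reduced, contradicting its being sequential. The case where the new arrow of $s$ is $\b$ is symmetric, with $\sigma$, the $v_j$, and the terminal part of $w'$ taking the places of $\rho$, the $u_i$, and its initial part. In every case we reach a contradiction, so $(\tilde Q,\tilde I)$ contains no $C$-sequential walk.

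The genuine difficulty is concentrated in this last step: one must keep precise track of the position of the lone new arrow produced by $\Dd_\c W$, handle the degenerate configurations in which a prefix, a suffix, or one of $p,q$ is trivial and the case $\a=\b$ (here using that branches of relations have length at least two, and that distinct relations of the chosen system give distinct new arrows, so that an extreme new arrow really does determine $\rho$ or $\sigma$), and verify that ``$w'$ contains a branch of a relation'' and ``$u_iw'v_j$ is not reduced'' match the definition of a $C$-sequential walk exactly. It costs nothing to assume from the start, invoking \cite[(1.2)]{ARS15} and the acyclicity of the quiver of a tilted algebra, that the chosen system of relations is strongly minimal, which streamlines these verifications.
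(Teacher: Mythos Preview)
The paper does not supply a proof of this lemma; it merely cites \cite[(2.5)]{ARS15}. So there is no argument in the paper to compare with. Your attempt, however, has a genuine gap.

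The gap is in your very first sentence: you take $w'$ to be a \emph{path} of old arrows. In the paper's definition $w=\a w'\b$ is only a \emph{walk}, and ``$w'$ consists of old arrows'' means that the middle segment is a walk whose letters are old arrows or their formal inverses. That this is the intended reading is clear from how the lemma is actually applied later: in the proof of Lemma~\ref{Lem:DerivCintoE} the candidate $C$-sequential walk is $\b v\a^{-1}u\b$, whose middle part $v\a^{-1}u$ contains the inverse of the old arrow $\a$; in case~(1) of that same proof one meets the walk $\eta u_1^{-1}\a(v'')^{-1}\c$. If $w'$ were required to be an oriented path, these applications would be trivially vacuous and the lemma would be useless at those places.

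Your entire strategy rests on interpreting $\a w'\b$ as an element of $\Bbbk\tilde Q$ and then placing it inside $E\tilde CE\subseteq E^2=0$, hence inside $\tilde I$, so that it must be hit by some generator $p\,g\,q$. This is perfectly correct when $w'$ is a path, but meaningless otherwise: if $w'$ is a genuine walk (containing an inverse arrow), then $\a w'\b$ is not an element of the path algebra at all, and there is nothing to put into $\tilde I$. Thus the proposal covers only a special case of the statement, and precisely not the case that is needed for the applications in Section~\ref{sec:TheProof}.
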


\subsection{Tame cluster tilted algebras}\label{subsec:TameCT} Because of lemma \ref{subsec:notation}, if $C = \K Q/I$ is a tilted algebra having $B= \tilde{C} = \K \tilde{Q} / \tilde{I}$ as relation extension, then one may choose a system of strongly minimal relations as generating set for $\tilde{I}$. Moreover, if $\rho = \sum \l_i w_i$ is a strongly minimal relation lying in $\tilde{I}$ but not in $I$, then each of the $w_i$ contains exactly one new arrow $\a_i$, because of lemma \ref{lem:seq-walks}, and each new arrow appears in this way. If this is the case, then we write $\a_i | w_i$.

This bring us to our next definition. We define a relation $\aQ$ on the set $\tilde{Q}_1 \setminus Q_1$ of new arrows by setting $\a \aQ \b$ if $\a,\b$ are equal or else there exists a strongly minimal relation $\rho = \sum \l_i w_i$ in $\tilde{I}$ and indices $i,j$ such that $\a | w_i$ and $\b | w_j$. We next let $\sim_Q$ be the least equivalence relation containing $\aQ$, that is, its transitive closure. The \emph{relation invariant} $N_{B,C}$ of $B= \tilde{C}$ with respect to $C$ is the number of equivalence classes of new arrows with respect to $\sim_Q$.

Assume now that $B$ is tame. Because of \cite{BMR07}, the tame representation-infinite cluster tilted algebras are just those of euclidean type, and the representation-finite are those of Dynkin type. We have the following theorem.

\begin{Thm}\label{thm:TameCT}
  Let $C$ be a tilted algebra, and $B$ its relation extension. Then :
  \begin{enuma}
    \item \cite[(6.3)]{ABIS13}, \cite[(5.7)]{AGST-16}  There exists a short exact sequence of vector spaces \[ \begin{tikzcd}
        0\arrow[r]&\Ho{1}{B}{E}\arrow[r]&\HH{1}{B} \arrow[r]&\HH{1}{C} \arrow[r] & 0.
      \end{tikzcd} \]
    \item \cite[5.5]{ARS15} $\Ho{1}{B}{E} \simeq \Ho{1}{C}{E} \oplus \End{C-C}{E}$, where the endomorphisms of $E$ are the $C-C$-bimodule endomorphisms.

    \item \cite[(1.1)]{ARS15} If $B$ is tame, then $\Ho{1}{B}{E} = \K^{N_{B,C}}$.
    \item \cite[(3.2)(3.8)]{ARS15} If $B$ is of type $\tilde{\mathbb{A}}$ and $\R$ is a system of relations for $C$, then $N_{B,C} = |\R|$ and does not depend on the choice of $C$.

    \item \cite[(4.3)(5.6)(5.7)]{ARS15} If $B$ is of type $\tilde{\mathbb{D}}$ or $\tilde{\mathbb{E}}$, then one can assume that $C$ is constricted and then $\Ho{1}{C}{E} = 0$ and $\Ho{1}{B}{E} = \End{C-C}{E} = \K^{N_{B,C}}$.
  \end{enuma}
  In particular, in the latter case, $E$ is, as $C-C$-bimodule, the direct sum of $N_{B,C}$ pairwise orthogonal bricks.

\end{Thm}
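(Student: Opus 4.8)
The five assertions $(a)$ through $(e)$ are imported verbatim from the cited references, so the only thing left to argue is the concluding statement, and I would deduce it formally from $(e)$ (which, in euclidean types $\tilde{\mathbb D}$ and $\tilde{\mathbb E}$, already packages $(b)$ and $(c)$). By $(e)$ one may replace $C$ by a constricted tilted algebra with the same relation extension $B$; then $\Ho{1}{C}{E} = 0$ and $\End{C-C}{E} \cong \K^{N_{B,C}}$, and the point to emphasise is that this last isomorphism is one of $\K$-algebras — equivalently, that the finite dimensional algebra $\End{C-C}{E}$ is commutative, semisimple and split, i.e.\ a product of $N_{B,C}$ copies of $\K$. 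This is exactly what the structural analysis of \cite{ARS15} yields in these types.

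Granting this, the argument is the standard idempotent bookkeeping. Write $1 = f_1 + \cdots + f_{N_{B,C}}$ for the image in $\End{C-C}{E}$ of the complete set of primitive orthogonal idempotents of $\K^{N_{B,C}}$. Each $f_i$ is a nonzero idempotent endomorphism of the finite dimensional $C-C$-bimodule $E$, and since the $f_i$ are pairwise orthogonal and sum to the identity they produce a direct sum decomposition $E = \bigoplus_{i=1}^{N_{B,C}} E_i$ of $C-C$-bimodules, with $E_i = f_iE \neq 0$ for every $i$ — the Peirce decomposition attached to a complete family of orthogonal idempotents in an endomorphism ring.

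It then remains to identify the summands, and here I would use the usual identification of Hom-spaces between summands with corner subspaces of the endomorphism ring, $\Hom{C-C}{E_i}{E_j} \cong f_j\,\End{C-C}{E}\,f_i$. Since $\End{C-C}{E}$ is commutative, $f_j\,\End{C-C}{E}\,f_i = \End{C-C}{E}\,f_jf_i$, which equals $\End{C-C}{E}\,f_i \cong \K$ when $i = j$ and $0$ when $i \neq j$. Thus each $E_i$ has endomorphism algebra $\K$, hence is indecomposable and therefore a brick, while $\Hom{C-C}{E_i}{E_j} = 0 = \Hom{C-C}{E_j}{E_i}$ for $i \neq j$, so the $E_i$ are pairwise orthogonal. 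This is the asserted decomposition of $_CE_C$ into $N_{B,C}$ pairwise orthogonal bricks.

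I do not expect any genuine obstacle in this last step, as it is purely formal. The entire weight of the statement sits in $(c)$ and $(e)$, namely in the vanishing $\Ho{1}{C}{E} = 0$ for a constricted $C$ and in the identification of $\End{C-C}{E}$ with $\K^{N_{B,C}}$ as a $\K$-algebra, both established in \cite{ARS15} for euclidean types $\tilde{\mathbb D}$ and $\tilde{\mathbb E}$. The only care needed is to invoke the algebra structure of $\End{C-C}{E}$ rather than merely its dimension; once that is in hand, the Peirce decomposition above does the rest.
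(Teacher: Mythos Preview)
Your proposal is correct. In the paper this theorem carries no proof at all: parts $(a)$--$(e)$ are pure citations, and the concluding ``In particular'' is simply appended without argument, presumably as part of what is being imported from \cite{ARS15}. Your Peirce-decomposition argument is a clean and self-contained way to extract the brick decomposition from the algebra isomorphism $\End{C-C}{E}\cong\K^{N_{B,C}}$, and you are right to flag that one needs the algebra structure, not just the dimension.

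It is worth noting that elsewhere in the paper (the Proposition in \S2.6) the authors obtain the same conclusion by a slightly different count: they first know, via the potential decomposition of \S2.5, that $E$ has exactly $N_W=N_{B,C}$ indecomposable summands, and then observe that $\dim{\End{C-C}{E}}\geq N_{B,C}$ with equality precisely when the summands are pairwise orthogonal bricks; equality is then forced by $(e)$. That route needs only the dimension of $\End{C-C}{E}$, at the cost of importing the summand count from the potential. Your route avoids that input but leans on the algebra structure. Both are fine; yours is arguably more direct for the statement as phrased.
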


\subsection{Cyclically oriented cluster tilted algebras} Let $Q$ be a quiver. A \emph{chordless cycle} in $Q$ is the full subquiver generated by a set of points $\{x_1, x_2,\ldots x_t\}$ which is topologically a cycle, that is , the edges between the $x_i$'s are precisely those of the form $x_i$ --- $x_{i+1}$ (with $x_{t+1}=x_1$), see \cite{BGZ06}. The quiver $Q$ is \emph{cyclically oriented} if each chordless cycle in $Q$ is an oriented cycle, see \cite{BT13}. A cluster tilted algebra is  \emph{cyclically oriented} if it has a cyclically oriented quiver.

In particular, cyclically oriented cluster tilted algebras contain no multiple arrows in their quiver. For instance, as shown in \cite{BMR06}, the representation-finite cluster tilted algebras are cyclically oriented.

We now give an example of a family of cyclically oriented cluster tilted algebras which can be representation-finite, tame or wild.

\begin{Ex}Consider the algebra $C_t$ given by the quiver
  \[\begin{tikzcd}[row sep = normal, column sep = normal]
      & 1 \arrow[ddr,"\b_1"] & \\
      & 2 \arrow[dr,"\b_2"'] & \\
      0\arrow[uur, "\a_1"] \arrow[ur, "\a_2"'] \arrow[dr, "\a_t"']  &  \vdots                    &t+1\\
      & t\arrow[ur,"\b_t"'] & \\
    \end{tikzcd}\]
  with $t\geqslant 1$ bound by the relation $\sum_{i=1}^t \a_i \b_i =0$. If $t\leqslant 2$, then $C_t$ is a representation-finite tilted algebra. If $t=3$, then it is tame concealed of type $\tilde{\mathbb{D}}_4$, while if $t > 3$, then $C_t$ is wild concealed of type
  \[\begin{tikzcd}
      & &0 \arrow[dll] \arrow[dl] \arrow[dr]&\\
      1 & 2& \ldots &t+1.
    \end{tikzcd}
  \]
  Thus, $C_t$ is tilted for any $t$. Its relation extension $\tilde{C}_t$ is given by the quiver
  \[\begin{tikzcd}[row sep = normal, column sep = normal]
      & 1 \arrow[ddr,"\b_1"] & \\
      & 2 \arrow[dr,"\b_2"'] & \\
      0\arrow[uur, "\a_1"] \arrow[ur, "\a_2"'] \arrow[dr, "\a_t"']  &  \vdots                    &t+1 \arrow[ll, "\gamma", near start]\\
      & t\arrow[ur,"\b_t"'] & \\
    \end{tikzcd}\]

  If $t\geqslant 3$, it is cluster concealed, tame for $t=3$, wild if $t>3$. It has Keller potential
  \[ W = \sum_{i=1}^t \gamma (\a_i \b_i).\]
  It is easily seen to be cyclically oriented.
\end{Ex}

Before stating the main properties of cyclically oriented cluster tilted algebras we recall from \cite{BMR06} that a \emph{system of minimal relations} on a bound quiver $(Q,I)$ is a system of relations whose elements belong to $I$ but not to $FI + IF$, where $F$ is the ideal of $\Bbbk Q$ generated by the arrows. The set of relations described in Section \ref{subsec:BoundQuivers} for cluster tilted algebras is usually not a system of minimal relations, see \cite{BT13}. The problem of finding a system of minimal relations for cluster tilted algebras is still open in general. It is only solved for representation-finite cluster tilted algebras \cite{BMR06}, for cluster tilted algebras of type $\tilde{\mathbb{A}}$, see \cite{ABCP09},  and for cyclically oriented cluster tilted algebras \cite{BT13}.

Let $Q$ be a cyclically oriented  quiver. A subset of $Q_1$ consisting of exactly one arrow from each chordless oriented cycle of $Q$ is called an \emph{admissible cut} of $Q$. If $Q$ is equipped with a potential $W$, then the \emph{algebra of the cut} is the quotient of the jacobian algebra $\mathcal{J}(Q,W)$ which is obtained by deleting the arrows of the cut, see \cite{BFPPT10}. Finally a path $\gamma$ in $Q$ which is antiparallel to an arrow $\xi$ is called a \emph{shortest path} if the full subquiver generated by the oriented cycle $\xi \gamma$ is chordless.

\begin{Thm}\cite{BT13}\label{Thm:TeoBT}
  Let $B$ be a cyclically oriented cluster tilted algebra, then :
  \begin{enuma}
    \item (4.2) The arrows of $Q$ occurring in a chordless cycle are in bijection with elements of a system of minimal relations for any presentation of $B$. Let $\xi$ be such an arrow and $\gamma_1,\ldots, \gamma_t$ be the shortest paths antiparallel to $\xi$. Then the corresponding relation is of the form $\sum_{i=1}^t a_i \gamma_i$ where the $a_i$ are nonzero scalars. Moreover the subquiver of $Q$ restricted to the points involved in the $\gamma_i$ looks as follows :

    %

    \[\begin{tikzcd}
        & \cdot \arrow[r]&  \cdot \arrow[r, dotted, no head,  "\gamma_1"] \arrow[d, dotted, no head, shift left = 4.5ex]&\cdot \arrow[r] &\cdot \arrow[dr] & \\
        \cdot \arrow[ur]\arrow[r] &\cdot \arrow[r] & \cdot\arrow[r,  dotted, no head, "\gamma_t"']& \cdot \arrow[r] &\cdot \arrow[r]&\cdot \arrow[lllll, "\xi", bend left = 30]
      \end{tikzcd}
    \]

    In particular, the paths $\gamma_i \xi$ share only the endpoints

    \item (3.4) Any chordless cycle is of the form $\xi \gamma$ where $\xi$ is an arrow and $\gamma$ a shortest path antiparallel to $\xi$.

    \item (4.7)(4.8) Assume $C$ has global dimension two. Then $B\simeq \tilde{C}$ if and only if $C$ is the quotient of $B$ by an admissible cut. Moreover, any such $C$ is strongly simply connected.

  \end{enuma}

\end{Thm}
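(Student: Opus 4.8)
The plan is to work throughout with the jacobian presentation $B\simeq\mathcal{J}(\tilde Q,W)$, with $\tilde Q$ the quiver of $B$ and $W=\sum_i\a_i\rho_i$ the Keller potential of some relation extension $\tilde C=B$, and to deduce the three statements from the combinatorics of chordless cycles in the cyclically oriented quiver $\tilde Q$. Since $B$ is cyclically oriented, $\tilde Q$ has no loops or multiple arrows and every chordless cycle is oriented; the arrows of $\tilde Q$ split into \emph{old} arrows (those of the quiver of the tilted algebra $C$) and \emph{new} arrows, one new arrow $\a_i\colon x\to y$ for each relation $\rho_i$ (from $y$ to $x$) of a system of relations of $C$. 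For (b), let $c$ be a chordless cycle; it is oriented, say $c=a_1\cdots a_m$, and for \emph{any} factorisation $c=\xi\gamma$ the path $\gamma$ is already a shortest path antiparallel to $\xi$, since the full subquiver on $c=\xi\gamma$ is chordless by hypothesis. The substance is thus the existence of a \emph{canonical} such $\xi$, and I would obtain it by showing that exactly one of the $a_j$ is a new arrow: a chordless cycle cannot consist only of old arrows (the relevant $C$ may be taken triangular, with no chordless oriented cycle in its quiver), and it cannot contain two new arrows, for tracing the old subpath joining them would produce a $C$–sequential walk, contradicting Lemma~\ref{lem:seq-walks}. This simultaneously exhibits a bijection between new arrows and chordless cycles, hence shows that the set of new arrows is an admissible cut of $\tilde Q$; this will be reused in (c).

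For (a), fix an arrow $\xi$ lying on a chordless cycle, and let $\gamma_1,\dots,\gamma_t$ be the shortest paths antiparallel to $\xi$, i.e. the paths completing $\xi$ to a chordless cycle. If $\xi=\a_i$ is the (unique, by (b)) new arrow of such a cycle, then $\partial_{\a_i}W=\rho_i$ because $\a_i$ occurs in no other term of $W$; using that the chordless cycles $\xi\gamma_j$ pairwise meet only in the two endpoints of $\xi$ --- a consequence of chordlessness together with $\tilde Q$ being cyclically oriented --- one checks that $\rho_i=\sum_j a_j\gamma_j$ with every $a_j\neq0$, and drawing the $\gamma_j$'s yields the displayed ``fan'' picture, whence the $\gamma_j\xi$ share only endpoints. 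If $\xi$ is an old arrow on the cycle, deleting it from the jacobian relations (equivalently, working in the ideal of $C$) gives a relation of the same shape by the same fan analysis. To get the bijection with a system of \emph{minimal} relations for an arbitrary presentation of $B$, one argues in two steps: every element of a minimal system is, modulo $F\tilde I+\tilde I F$, of the form $\partial_\b W$ with $\b$ on a chordless cycle; and the family of all such $\partial_\xi W$ is independent modulo $F\tilde I+\tilde I F$, because the support of each $\partial_\xi W$ is confined to the chordless cycles through $\xi$, so no $\K\tilde Q$–linear combination of arrows can carry one of these derivatives into the span of the others. Presentation independence follows since a change of presentation alters each $\rho_i$ only within the span of the paths parallel to it, leaving the chordless cycles, and hence the whole skeleton, unchanged.

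For (c), assume $C$ has global dimension two. If $C=B/J$ for an admissible cut $J$ of the cyclically oriented algebra $B$, then \cite{BFPPT10} (rephrased through the jacobian presentation and the equality $W=\sum_i\a_i\rho_i$) gives $\tilde C\simeq\mathcal{J}(\tilde Q,W)=B$; conversely, if $B=\tilde C$, part (b) shows the new arrows form an admissible cut $J$, and by the construction of $\tilde Q$ the quotient $B/J$ is the quiver of $C$ bound by the old relations, so $B/J\simeq C$. For strong simple connectedness of such a $C$: it is triangular, since cutting every chordless cycle of a cyclically oriented quiver cuts every oriented cycle; and for a full convex subcategory $D$ and any presentation one uses (a) to present the relations of $D$ as restrictions of fans $\sum_j a_j\gamma_j$, so that the quiver of $D$ is a gluing of such fans along paths, on which an explicit induction over the fans shows every normalised derivation is inner. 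Hence $\HH{1}{D}=0$ for every full convex $D$, equivalently $C$ is strongly simply connected.

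The step I expect to be the main obstacle is the minimality assertion in (a): proving simultaneously that the derivatives $\partial_\xi W$, over all arrows $\xi$ occurring in chordless cycles, generate $\tilde I$, that none of them lies in $F\tilde I+\tilde I F$, and that this is independent of the presentation. This is precisely where one genuinely needs the global geometry of cyclically oriented quivers: that distinct chordless cycles interact only in the constrained way pictured in (a), and that the support of each relation cannot escape its own cycles. The ``fan'' picture is the combinatorial heart of the argument; once it is in place, the statements in (b) and (c) are comparatively formal, and the delicate point is making the independence and minimality arguments robust under arbitrary presentations.
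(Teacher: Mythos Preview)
The paper does not prove this theorem: it is quoted verbatim from \cite{BT13} (with the specific internal references (4.2), (3.4), (4.7), (4.8) indicated), and no proof is given here. So there is no ``paper's own proof'' to compare your proposal against; the result is used as a black box throughout Section~3.

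That said, a few remarks on your sketch. Your approach is to transport everything through a fixed relation-extension presentation $B=\tilde C$ and the old/new arrow dichotomy, then appeal to Lemma~\ref{lem:seq-walks}. This is different in spirit from \cite{BT13}, which works directly with the combinatorics of chordless cycles and the jacobian ideal, without first fixing a tilted algebra $C$. Two points where your argument is thin: (i) the claim that a chordless cycle cannot contain two new arrows is not an immediate consequence of the absence of $C$-sequential walks, since the definition of $C$-sequential requires that \emph{no} subpath of the intermediate old walk be a branch of a relation, and you have not verified this for an arbitrary chordless cycle; (ii) your minimality argument in~(a) (independence of the $\partial_\xi W$ modulo $F\tilde I+\tilde I F$, and presentation independence) is asserted rather than proved, and this is exactly the technical core of \cite[(4.2)]{BT13}. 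Your strong simple connectedness argument in~(c) via ``induction over fans'' is also only a gesture; the actual proof in \cite{BT13} goes through the separation property and a careful analysis of convex subcategories. None of this matters for the present paper, which simply cites the result.
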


We warn the reader that in ($c$) above, the algebra of the cut $C$ is not necessarily tilted : as pointed out in \cite{BT13, BFPPT10} it may be iterated tilted of global dimension 2.

It follows from ($a$) above that if $\xi$ is an arrow and $\rho_\xi$ is the relation corresponding to it, then the Keller potential is
\[W = \sum_{\xi \in Q_1} \xi \rho_\xi.\]

Because of ($b$), it is the sum of all chordless cycles. Thus, it is completely determined by the quiver.

Finally, it follows from the last statement of ($c$) above that any admissible cut of a cyclically oriented cluster tilted algebra does not contain a bypass, see \cite[3.9]{BT13}. We recall that a \emph{bypass} is a pair $(\a,p)$ where $\a$ is an arrow and $p$ a path parallel to $\a$. It is a \emph{proper bypass} if the length of $p$ is at least two.

\subsection{Direct decomposition of the potential}\label{sec:DecPotential}Let $(Q,W)$ be a quiver with potential. Following \cite{ABDLS-19}, we define an equivalence relation $\sim_W$ between the oriented cycles which appear as summands of the potential as follows. If $\c, \c'$ are two summands of $W$, we set $\c\approx_W\c'$ in case there exists an arrow which is common to $\c$ and $\c'$. Then $\sim_W$ is defined to be the least equivalence relation containing $\approx_W$. Thus, $\sim_W$ is the transitive closure of $\approx_W$. The number $N_W$ of equivalence classes of cycles in the potential under $\sim_W$ is called the \emph{potential invariant} of $(Q,W)$.

A sum decomposition of the potential \[W = W' + W''\] is called \emph{direct} if, whenever $\c'$ is a cycle in $w'$, $\c''$ a cycle in $W''$, then we have $\c' \not\sim_W \c''$. In this case we write $W = W' \oplus W''$. Thus $N_W$ equals the number of indecomposable direct summands of the potential.

The motivation for introducing these concepts is their relation with the direct sum decompositions of the $C-C$-bimodule $E = \Ext{C}{2}{DC}{C}$ when $C$ is a tilted algebra.

\begin{Thm}\cite{ABDLS-19} Let $C$ be a tilted algebra, $E = \Ext{C}{2}{DC}{C}$ and $B = C\ltimes E$. Further, let $W$ be the Keller potential of $B$.
  \begin{enuma}
    \item (1.2.2) Assume $W = W'\oplus W''$ and denote by $E',\ E''$ the $C-C$-bimodules generated by the classes of new arrows appearing in a cycle of $W', W''$ respectively. Them $E = E'\oplus E''$ as $C-C$-bimodules.
    \item (1.3.1) Conversely, if $B$ is cyclically oriented or of type $\tilde{\mathbb{A}}$ and $E=E'\oplus E''$ as $C-C$-bimodules, then there is a decomposition $W = W' \oplus W''$ of $W$ such that $E', E''$ are the $C-C$-bimodules generated the classes of arrows belonging to $W', W''$ respectively.
  \end{enuma}

\end{Thm}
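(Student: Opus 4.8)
The plan is to prove the two implications by entirely different means: part (a) is essentially formal, while part (b) needs the explicit combinatorics of the two families of algebras and contains the real difficulty.

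\emph{Part (a).} Since $B=C\ltimes E$ one has $E\cdot E=0$, so, identifying $B$ with $\mathcal{J}(\tilde{Q},W)$, the bimodule $E$ is spanned by the classes of the paths of $\tilde{Q}$ that use exactly one new arrow. I would make this precise by grading $\K\tilde{Q}$ by the number of new arrows a path uses: each cyclic derivative $\partial_\gamma W$ is then homogeneous (of degree $0$ if $\gamma$ is new, of degree $1$ if $\gamma$ is old and occurs in a cycle of $W$, and zero otherwise), so $\tilde{I}$ is a homogeneous ideal and $E\simeq(\K\tilde{Q})_1/(\tilde{I})_1$, the degree-one part. Now a direct decomposition $W=W'\oplus W''$ partitions the arrows occurring in cycles of $W$, since an arrow lying in a cycle of $W'$ and in a cycle of $W''$ would make those cycles $\approx_W$-related; write $\mathcal{A}'$, $\mathcal{A}''$ for the new arrows occurring in a cycle of $W'$, $W''$ respectively. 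Every new arrow $\alpha_i$ lies in exactly one of them (it occurs in the cycle $\alpha_i\rho_i$), so $(\K\tilde{Q})_1=P'\oplus P''$ according to the side of its new arrow, and $P'$, $P''$ are sub-bimodules since multiplying a path by an old arrow does not change its new arrow. Finally, each generator of $(\tilde{I})_1$ — an element $u(\partial_\gamma W)v$ with $\gamma$ old and $u,v$ paths of old arrows, or an element $u\rho_i v$ in which exactly one of $u,v$ passes through a new arrow — is homogeneous for this splitting: in the first case because all cycles of $W$ through a fixed old arrow are $\sim_W$-equivalent hence on one side, in the second because the element uses a single new arrow. Hence $(\tilde{I})_1=((\tilde{I})_1\cap P')\oplus((\tilde{I})_1\cap P'')$ and $E=E'\oplus E''$, with $E'$, $E''$ the images of $P'$, $P''$.

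\emph{Part (b): reduction.} Fix a bimodule decomposition $E=E'\oplus E''$. It suffices to prove: (i) for every new arrow $\alpha$ the class $\bar\alpha$ lies entirely in $E'$ or entirely in $E''$; and (ii) if $\alpha\aQ\beta$ then $\bar\alpha$, $\bar\beta$ lie on the same side. Granting these, put $\mathcal{A}'=\{\alpha\text{ new}:\bar\alpha\in E'\}$ and $\mathcal{A}''$ likewise; by (ii) and transitivity these are $\sim_Q$-saturated, and for $B$ cyclically oriented the bookkeeping of Theorem \ref{Thm:TeoBT} (the minimal relation attached to an old arrow $\delta$ runs exactly through the new arrows of the chordless cycles containing $\delta$) shows that the $\sim_Q$-classes of new arrows match the $\sim_W$-classes of the summands of $W$, so the corresponding grouping of cycles is a direct decomposition $W=W'\oplus W''$. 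Part (a) applied to it gives $E=\hat{E}'\oplus\hat{E}''$ with $\hat{E}'$ the sub-bimodule generated by $\{\bar\alpha:\alpha\in\mathcal{A}'\}$; since $\hat{E}'\subseteq E'$, $\hat{E}''\subseteq E''$ and $\hat{E}'\oplus\hat{E}''=E=E'\oplus E''$, necessarily $\hat{E}'=E'$ and $\hat{E}''=E''$, which is the assertion. For type $\tilde{\mathbb{A}}$ one argues in the same way, using that $B$ is gentle.

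\emph{Part (b): assertion (ii).} This one is cheap and comes from projecting a relation. Choose a strongly minimal relation $\rho=\sum_i\lambda_i w_i$ in $\tilde{I}$ witnessing $\alpha\aQ\beta$, say $\alpha\mid w_1$ and $\beta\mid w_2$ (strongly minimal systems for $\tilde{I}$ are available by \cite[(1.2)]{ARS15}). Then $\sum_i\lambda_i\bar{w}_i=0$ in $B$, each $\bar{w}_i$ lying in the sub-bimodule generated by the unique new arrow of $w_i$; if $\bar\alpha\in E'$ while $\bar\beta\in E''$, the bimodule projection $E\to E'$ kills $\bar{w}_2$ together with every $\bar{w}_i$ whose new arrow is on the $E''$-side, and fixes the rest, yielding $\sum_{i\in J}\lambda_i w_i\in\tilde{I}$ for a proper nonempty subset $J$, contradicting the minimality of $\rho$.

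\emph{Part (b): assertion (i), the obstacle.} This is where I expect essentially all the work to go. Since every cycle of $W$ has length $\ge 3$ and each $\rho_i=\partial_{\alpha_i}W$ has length $\ge 2$, every generator of $(\tilde{I})_1$ has total length $\ge 2$, hence $\operatorname{top}(E)=\bigoplus_\alpha\K\bar\alpha$ is a direct sum of simple bimodules indexed by the new arrows, pairwise non-isomorphic when $B$ has no multiple arrows (the cyclically oriented case). So $E=E'\oplus E''$ partitions the new arrows, and for $\alpha$ on the $E'$-side one gets $\bar\alpha=\pi'(\bar\alpha)+v$ with $v\in E''\cap\operatorname{rad}(E)$ lying in the same space $e_xEe_y$ as $\bar\alpha$; assertion (i) thus reduces to showing that this space is one-dimensional, i.e. that every path through a single new arrow which is parallel to $\alpha$ is, modulo $\tilde{I}$, a scalar multiple of $\bar\alpha$. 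For $B$ cyclically oriented I would extract this dimension bound from the local description in Theorem \ref{Thm:TeoBT}: the list of chordless cycles through a given arrow, the shape $\sum a_i\gamma_i$ of the attached relation, and the fact that the $\gamma_i\xi$ meet only at their endpoints, leave very little room for such parallel paths, and the jacobian relations annihilate what remains. For type $\tilde{\mathbb{A}}$ the same bound is read off the string combinatorics of the gentle algebra $B$ \cite{ABCP09}, the offending configurations being precisely the double arrows excluded from Theorem~A. Ruling out in this way a "twisted" splitting that would send a new arrow partly into each summand is the heart of the matter.
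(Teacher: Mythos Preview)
The paper does not prove this statement: it is quoted from \cite{ABDLS-19} (note the citation immediately after \textbf{Theorem}) and used without proof as input for the rest of Section~\ref{sec:DecPotential} and beyond. There is thus no proof in the present paper to compare your attempt against.

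On its own merits, your outline is largely sound. Part~(a) works as you describe: the grading by number of new arrows is well defined, and the key point that all cycles of $W$ through a fixed arrow are $\approx_W$-related forces each degree-one generator of $\tilde I$ to lie in $P'$ or in $P''$. For part~(b), the reduction to assertions (i) and (ii) is correct, and your argument for (ii) via projecting a strongly minimal relation is fine once (i) is established. Your approach to (i) in the cyclically oriented case---reducing to $\dim e_xEe_y=1$ for a new arrow $\alpha\colon x\to y$, then excluding any parallel path $u\beta v$ with $\beta\ne\alpha$ new by a chordless-cycle and $C$-sequential-walk case analysis---is essentially the same combinatorics the paper carries out in Lemma~\ref{Lem:EndoE} (there for the different purpose of showing bimodule endomorphisms act diagonally on new arrows), so you could lift that argument almost verbatim.

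The one soft spot is type~$\tilde{\mathbb A}$. Your argument for (i) via the top of $E$ uses that the simple bimodule summands $\Bbbk\bar\alpha$ are pairwise non-isomorphic, which fails exactly when two new arrows are parallel; in that situation a decomposition $E=E'\oplus E''$ need not respect the basis $\{\bar\alpha\}$. Your closing remark that these configurations are ``excluded from Theorem~A'' is misplaced: part~(b) of the cited theorem covers type~$\tilde{\mathbb A}$ without restriction, and the paper's own Theorem in \ref{subsec:TameCase} explicitly allows double arrows in $B$. That case needs a separate treatment (for instance, an explicit analysis of the finitely many $\tilde{\mathbb A}$ shapes with a double arrow, or a change of arrow basis compatible with the given splitting).
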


As an easy consequence of (a), there is an injection from the set of equivalence classes of cycles into the set of indecomposable direct summands of $_CE_C$. 

Actually, it is easy to see that this injection is actually a bijection. Indeed, assume $E= \oplus_{i=1}^s E_i$, where the $E_i$ are indecomposable $C$-$C$-bimodules. For each $i$, with $1\leq i\leq s$, $\textrm{top}E_i\neq 0$, hence $E_i$ contains a new arrow $\alpha_i$ in its support. Denoting by $\rho_i$ the relation on $C$ corresponding to $\alpha_i$, the cycle $w_i=\alpha_i\rho_i$ is a summand of the Keller potential, and the equivalence class of $w_i$ maps into $E_i$ under the injection. This indeed follows from the concrete description of this injection, see \cite{ABDLS-19}. In particular, the number $s$ of indecomposable direct summands of $E$ as a $C$-$C$-bimodule is equal to the relation invariant $N_W$.

\subsection{A lower bound}

As a consequence of Theorem \ref{thm:TameCT} and the remarks following it, we deduce a lower bound for the dimension of the first Hochshchild cohomology group of a cluster tilted algebra.

\begin{Prop}
Let $C$ be a tilted algebra, and $B$ its relation extension by $E= \operatorname{Ext}^2_C(DC,C)$. Then we have
\begin{itemize}
\item[(a)] $\dim \HH{1}{B} \geq \dim \HH{1}{C} + N_W$
\item[(b)] if $B$ is not hereditary, then $\dim \HH{1}{B}\neq 0$.
\item[(c)] We have $\dim \HH{1}{B}=\dim \HH{1}{C} + N_W$ if and only if:
\begin{itemize}
    \item[i)] $\operatorname{H}^1(C', E)=0$
    \item[ii)] $E$ decomposes as a direct sum of pairwise orthogonal bricks.
\end{itemize}
\end{itemize}
\end{Prop}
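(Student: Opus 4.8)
The plan is to derive everything from Theorem \ref{thm:TameCT} together with the bijection recorded in Section \ref{sec:DecPotential}. First I would invoke part (a) of Theorem \ref{thm:TameCT}, the short exact sequence $0\to \Ho{1}{B}{E}\to \HH{1}{B}\to \HH{1}{C}\to 0$, which immediately gives the dimension formula $\dim\HH{1}{B} = \dim\HH{1}{C} + \dim\Ho{1}{B}{E}$. So statements (a) and (c) reduce to understanding $\dim\Ho{1}{B}{E}$, and I would do this via part (b) of the same theorem, namely the decomposition $\Ho{1}{B}{E}\simeq \Ho{1}{C}{E}\oplus \End{C-C}{E}$. Combining these, $\dim\HH{1}{B} = \dim\HH{1}{C} + \dim\Ho{1}{C}{E} + \dim\End{C-C}{E}$, and since $\Ho{1}{C}{E}$ and $\End{C-C}{E}$ are finite-dimensional with nonnegative dimensions, we get $\dim\HH{1}{B}\geq \dim\HH{1}{C} + \dim\End{C-C}{E}$.

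For part (a), it then remains to identify $\dim\End{C-C}{E}$ with $N_W$, or at least to bound it below by $N_W$. Here I would use the bijection established at the end of Section \ref{sec:DecPotential}: writing $E = \bigoplus_{i=1}^s E_i$ as a sum of indecomposable $C$-$C$-bimodules, one has $s = N_W$. Since $\End{C-C}{E}$ contains the $s$ orthogonal idempotents projecting onto the summands $E_i$ (these are linearly independent because the $E_i$ are nonzero), we get $\dim\End{C-C}{E}\geq s = N_W$, which yields (a). For part (b), if $B$ is not hereditary then $C$ is not hereditary either (a relation extension of a hereditary algebra is hereditary, being equal to the algebra itself), so $E\neq 0$, hence $N_W\geq 1$, and $\dim\HH{1}{B}\geq \dim\HH{1}{C} + N_W\geq 1 > 0$ by (a). (One should double-check the degenerate case: $B$ not hereditary forces at least one relation, hence at least one chordless-cycle-type summand in the Keller potential, so $N_W\geq 1$ indeed; this is the only place where a small argument is needed.)

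For part (c), equality $\dim\HH{1}{B} = \dim\HH{1}{C} + N_W$ holds if and only if $\dim\Ho{1}{C}{E} + \dim\End{C-C}{E} = N_W$. Since $\dim\Ho{1}{C}{E}\geq 0$ and $\dim\End{C-C}{E}\geq N_W$ always, the sum equals $N_W$ precisely when both $\Ho{1}{C}{E} = 0$ \emph{and} $\dim\End{C-C}{E} = N_W$. The first condition is exactly (c)(i). The second condition, $\dim\End{C-C}{E} = s$ where $s$ is the number of indecomposable summands, is a standard characterization: for a finite-dimensional module (or bimodule) $E = \bigoplus E_i$, the endomorphism algebra has dimension equal to the number of indecomposable summands if and only if each $E_i$ is a brick (i.e. $\End{C-C}{E_i} = \K$) and $\Hom{C-C}{E_i}{E_j} = 0$ for $i\neq j$, i.e. the $E_i$ are pairwise orthogonal bricks; this is (c)(ii). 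So I would spell out this last equivalence as a short lemma-style remark and assemble the pieces.

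The main obstacle I anticipate is not conceptual but bookkeeping: I must be careful that the identification $s = N_W$ from Section \ref{sec:DecPotential} is used with the correct notation (the excerpt briefly writes $N_{B,C}$ in Theorem \ref{thm:TameCT} and $N_W$ in Section \ref{sec:DecPotential}; these agree by the bijection argument given there, and I should state this compatibility explicitly), and that in part (c) the statement writes $\operatorname{H}^1(C',E)$ rather than $\operatorname{H}^1(C,E)$ — I would clarify that $C'$ here denotes the tilted algebra $C$ (or, in the notation of Theorem \ref{thm:TameCT}(e), the constricted choice of cut). The only genuinely nontrivial input, the short exact sequence and the bimodule splitting, is already available from Theorem \ref{thm:TameCT}, so the proof is essentially an assembly argument; the delicate point is making sure no hypothesis of tameness or cyclic orientation is silently used, since this proposition is stated for an arbitrary tilted algebra $C$.
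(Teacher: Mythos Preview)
Your proposal is correct and follows essentially the same route as the paper: combine parts (a) and (b) of Theorem \ref{thm:TameCT} to obtain $\dim\HH{1}{B}=\dim\HH{1}{C}+\dim\Ho{1}{C}{E}+\dim\End{C-C}{E}$, use the bijection of Section \ref{sec:DecPotential} to identify the number of indecomposable summands of $_CE_C$ with $N_W$, and bound $\dim\End{C-C}{E}\geq N_W$ via the projection idempotents, with equality precisely when the summands are pairwise orthogonal bricks. The paper's phrasing of (b) is cosmetically different (it says $W$ has a nonzero summand rather than $E\neq 0$), and you are right that the $C'$ in (c)(i) should read $C$.
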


\begin{proof}
  Because of Theorem \ref{thm:TameCT} (a) and (b), we have $\dim \HH{1}{B}=\dim \HH{1}{C} + \dim \operatorname{H}^1(C,E) + \dim \End{}{E}$.
  
  Now, let $E=\oplus_{i=1}^{N_W}E_i$ be a direct sum decomposition of the $C-C$-bimodule $E$ into indecomposable summands. Because the identity morphism on each $E_i$ induces an endomorphism of $E$, we have $\dim \End{C-C}{E}\geq N_W$ and equality holds if and only if $E$ is the direct sum of pairwise orthogonal bricks, thus $\dim \HH{1}{B}\geq \dim \HH{1}{C}+N_W$ and equality holds if and only if $\operatorname{H}^1(C,E)=0$ and $\dim \End{C-C}{E}=N_W$, that is, if and only if $E$ is the direct sum of pairwise orthogonal bricks. This proves $(a)$ and $(c)$. Finally, if $B$ is not hereditary, then its Keller potential $W$ contains at least a nonzero summand, so that $N_W\neq 0$ which implies $\HH{1}{B}\neq 0$, thus proving $(b)$.
 
\end{proof}

It follows from the results of \cite{ARS15} that if $B$ is a tame cluster tilted algebra, then the two conditions of $(c)$ are satisfied, see \cite{ARS15} (5.6), (5.8) and (5.9). As we shall now see, they are also satisfied if $B$ is cyclically oriented. So, for both of these classes, the equality $(c)$ between the dimensions holds.

\subsection{Cycle and arrow equivalences}\label{subsec:CycArrowEquiv}  We now prove that the potential invariant $N_W$ equals the relation invariant $N_{B,C}$ defined in \ref{subsec:TameCT}.

\begin{Lem}
  Let $C$ be a tilted algebra, and $B$ its relation extension. If $W$ is the Keller potential of $B$, then $N_W = N_{B,C}$. If moreover $B$ is tame, then $N_W$ equals the number of indecomposable summands of $E = \Ext{C}{2}{DC}{C}$.
\end{Lem}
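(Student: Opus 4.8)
The plan is to show that the two equivalence relations — $\sim_W$ on the summands (chordless/relation cycles) of the Keller potential, and $\sim_Q$ on the new arrows — are really the same relation under the natural correspondence between cycles of $W$ and new arrows. Recall from Section \ref{subsec:BoundQuivers} that the Keller potential is $W = \sum_{i=1}^k \alpha_i\rho_i$, where the $\alpha_i$ range over the new arrows and $\rho_i$ is the old relation (from the target of $\alpha_i$ to its source) giving rise to $\alpha_i$. So the summands of $W$ are exactly the cycles $w_i = \alpha_i\rho_i$, one for each new arrow $\alpha_i$, and the map $\alpha_i \mapsto w_i$ is a bijection between new arrows and summands of $W$. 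It therefore suffices to prove: for new arrows $\alpha_i, \alpha_j$, one has $\alpha_i \sim_Q \alpha_j$ if and only if $w_i \sim_W w_j$. Since both $\sim_Q$ and $\sim_W$ are defined as transitive closures of the relations $\aQ$ and $\approx_W$ respectively, it is enough to compare $\aQ$ and $\approx_W$ at the level of generating relations — more precisely, to show that $\alpha_i \aQ \alpha_j$ implies $w_i \sim_W w_j$ and that $w_i \approx_W w_j$ implies $\alpha_i \sim_Q \alpha_j$; transitivity then upgrades this to equality of the two equivalence relations.

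First I would do the direction $w_i \approx_W w_j \Rightarrow \alpha_i \sim_Q \alpha_j$. If $w_i = \alpha_i\rho_i$ and $w_j = \alpha_j\rho_j$ share an arrow $\beta$, there are two cases. If $\beta$ is new, then by the bijection it equals $\alpha_i$ (the unique new arrow of $w_i$, by Lemma \ref{lem:seq-walks}) and also $\alpha_j$, so $\alpha_i = \alpha_j$ and trivially $\alpha_i \sim_Q \alpha_j$. If $\beta$ is old, then $\beta$ appears in both $\rho_i$ and $\rho_j$; here I would invoke Lemma \ref{subsec:notation} to pass to a strongly minimal system of relations for $\tilde I$, and the description of Section \ref{subsec:TameCT} showing each such relation $\rho = \sum \lambda_l w_l$ with $\alpha_l \mid w_l$ distributes the new arrows one per path. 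The point is that the old relation $\rho_i$ (a relation of $C$) and the path structure of the $w_l$'s in a strongly minimal relation of $\tilde I$ are linked: a strongly minimal relation of $\tilde I$ containing $w_i$ as one of its $w_l$'s will, if $\beta$ is shared, also contain a branch involving $\alpha_j$, giving $\alpha_i \aQ \alpha_j$. (In the cyclically oriented case this is cleanest via Theorem \ref{Thm:TeoBT}(a): the relation attached to an arrow $\xi$ is $\sum a_i\gamma_i$ over shortest antiparallel paths, and two new arrows become $\aQ$-related exactly when their chordless cycles overlap in an old arrow.) Conversely, for $\alpha_i \aQ \alpha_j \Rightarrow w_i \sim_W w_j$, I would take the strongly minimal relation $\rho = \sum \lambda_l w_l$ of $\tilde I$ with $\alpha_i \mid w_l$ and $\alpha_j \mid w_m$ for some $l,m$; tracing $w_l$ and $w_m$ back to the Keller cycles they sit inside, consecutive paths in such a relation share old arrows, so the corresponding Keller summands are $\approx_W$-chained, whence $w_i \sim_W w_j$.

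The main obstacle I anticipate is the bookkeeping in the old-arrow case: relating a \emph{strongly minimal} relation of $\tilde I$ (whose paths each carry exactly one new arrow, by Lemmas \ref{subsec:notation} and \ref{lem:seq-walks}) to the \emph{Keller} presentation $W = \sum \alpha_i\rho_i$ (whose cycles carry exactly one new arrow by construction), and checking that the overlap patterns match up — i.e. that an overlap detected in one presentation is detected in the other. This is exactly where Lemma \ref{lem:seq-walks} (no $C$-sequential walk) is essential: it forbids paths with two new arrows and so guarantees the new-arrow-to-cycle correspondence is a genuine bijection in both presentations, letting one transfer overlaps between them. Once $N_W = N_{B,C}$ is established, the final assertion is immediate: if $B$ is tame, then by the remarks following Theorem \ref{Thm:TeoBT} (or directly by Theorem \ref{thm:TameCT}(c),(d),(e)) the bimodule $E = \Ext{C}{2}{DC}{C}$ decomposes into $N_{B,C}$ indecomposable summands, so the number of such summands equals $N_W$.
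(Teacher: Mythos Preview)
Your overall plan is the paper's: match cycles of $W$ with new arrows and show $\approx_W$ and $\aQ$ coincide under this correspondence. But there is a setup error and two genuine gaps in the execution. The setup error is that you treat $w_i=\alpha_i\rho_i$ as a single cycle; when $\rho_i$ is non-monomial, $\alpha_i\rho_i$ is a \emph{sum} of several cycles, all containing $\alpha_i$. The map $\gamma\mapsto\alpha_\gamma$ from cycles to new arrows is therefore only a surjection, which descends to a bijection on equivalence classes because cycles with the same new arrow are trivially $\approx_W$-related.

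The substantive gaps are the two implications. For $\gamma\approx_W\sigma\Rightarrow\alpha_\gamma\aQ\alpha_\sigma$ with shared \emph{old} arrow $\beta$, you assert that some strongly minimal relation of $\tilde I$ ``containing $w_i$'' will also involve $\alpha_j$, but give no mechanism for producing such a relation. The missing idea is to take the cyclic derivative $\partial_\beta W$: since $\beta$ lies on both $\gamma$ and $\sigma$, this relation has branches carrying $\alpha_\gamma$ and $\alpha_\sigma$ respectively, and Lemma~\ref{subsec:notation} then yields a strongly minimal relation with the same property. For the converse $\alpha_\gamma\aQ\alpha_\sigma\Rightarrow\gamma\approx_W\sigma$, given a strongly minimal relation $\rho=\sum\lambda_l w_l$ from $x$ to $y$ with $\alpha_\gamma\mid w_i$ and $\alpha_\sigma\mid w_j$, your claim that ``consecutive paths in such a relation share old arrows'' is unjustified and unnecessary. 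The point is rather that $\rho$, being a relation of the jacobian algebra, is a cyclic derivative of $W$, so there is an arrow $\eta\colon y\to x$ (in $C$) with $\eta\rho$ a summand of $W$; then the cycles $\eta w_i$ and $\eta w_j$ already share $\eta$, giving $\gamma\approx_W\sigma$ directly. The tame statement then follows from Theorem~\ref{thm:TameCT}(e), as you say.
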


\begin{proof} Because of lemma \ref{lem:seq-walks}, each cycle $\c$ of $W$ contains exactly one new arrow $\a_\c$. The correspondence $\c \leftrightarrow \a_\c$ is actually bijective, because each new arrow lies on a cycle of $W$. We first claim that $\c \approx_W \sigma$ implies $\a_\c \approx_Q \a_\s$. Indeed, the hypothesis says that the cycles $\c$ and $\s$ share an arrow, say $\a$. If $\a = \a_\c$, then, because of lemma \ref{lem:seq-walks} we also have $\a = \a_\s$. If $\a \ne \a_\c$ then, for the same reason $\a \ne \a_\s$. Then the cyclic derivatives of the cycles containing $\a$ yield a minimal relation $\rho = \sum a_i w_i$ and two indices $i,j$ such that $\a_\c | w_i,\ \a_\s | w_j$. Because of lemma \ref{subsec:notation} there exists a strongly minimal relation  having the same property. Therefore $\a_\c \approx_Q \a_\s$, as required.

  Conversely, assume now $\a_\c \approx_Q \a_\s$. Then there exists a strongly minimal relation $\rho = \sum a_i w_i$ from $x$ to $y$, say, and indices $i,j$ such that $\a_\c | w_i$ and $\a_\s |w_j$. This implies the existence of an arrow $\eta : y \to x$ (actually in $C$) so that the summand of $W$ corresponding to $\rho$ is actually $\eta \rho = a_i(\eta w_i) + a_j (\eta w_j) + \sum_{k\ne i,j} a_k (\eta w_k)$. Therefore $\c = \eta w_i$ and $\s = \eta w_j$ and these two cycles $\c, \s$ share the arrow $\eta$. Hence $\c \approx_W \s$.

  As a consequence, if $\c,\s$ are two cycles in $W$, then we have $\c \sim_W \s$ if and only if $\a_\c \sim_Q \a_\s$. Therefore $N_W = N_{B,C}$.

  If $B$ is tame, then it follows from theorem \ref{thm:TameCT}(e) that $N_{BC}$ equals the number of indecomposable summands of $E$, hence the last statement.

\end{proof}


\section{Proof of Theorem A}\label{sec:TheProof}

\subsection{The cyclically oriented case} Let $C$ be a tilted algebra and $E= \Ext{C}{2}{DC}{C}$ be such that $B=C\ltimes E$ is cyclically oriented. Because of theorem \ref{thm:TameCT}(a) there exists a short exact sequence of vector spaces

\[ \begin{tikzcd}
    0\arrow[r]&\Ho{1}{B}{E}\arrow[r]&\HH{1}{B} \arrow[r]&\HH{1}{C} \arrow[r] & 0.
  \end{tikzcd}
\]

This gives $\HH{1}{B} \simeq \HH{1}{C} \oplus \Ho{1}{B}{E}$ as vector spaces. Because of theorem \ref{Thm:TeoBT} above, $C$ is strongly simply connected, so $\HH{1}{C}=0$.

Moreover, \cite[4.8]{AGST-16} gives $\Ho{1}{B}{E} \cong \Ho{1}{C}{E} \oplus \End{C-C}{E}$, so that $\HH{1}{B} \cong \Ho{1}{C}{E} \oplus \End{C-C}{E}$. We start by proving that $\Ho{1}{C}{E} = 0$. This will imply that $\HH{1}{B} \cong \End{C-C}{E}$. We shall complete the proof of the main theorem by proving that the indecomposable summands of $E$ are pairwise orthogonal bricks, in \ref{subsec:EndoE} and \ref{subsec:OrthBrocks}, respectively.

\begin{Lem}\label{Lem:DerivCintoE}
  Let $C$ be a tilted algebra such that $B = C \ltimes E$ is cyclically oriented. Then $\Ho{1}{C}{E} = 0$.
\end{Lem}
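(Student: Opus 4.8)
The plan is to show that every normalised derivation $d\colon C\to E$ is inner, in fact zero, by exploiting the very rigid structure of the relations in the cyclically oriented cluster tilted algebra $B$ described in Theorem \ref{Thm:TeoBT}. Since $C$ is triangular, a normalised derivation $d$ is completely determined by its values $d(\a)\in e_{s(\a)}Ee_{t(\a)}$ on the arrows $\a$ of the quiver $Q$ of $C$. The bimodule $E=\Ext{C}{2}{DC}{C}$ has, as a basis indexed by pairs (new arrow $\xi$, path in $C$ antiparallel to $\xi$), the paths $\gamma$ running ``around'' each chordless cycle $\xi\gamma$ of the quiver $\tilde Q$ of $B$; concretely $e_xEe_y$ is spanned by the residues of paths $p$ from $x$ to $y$ in $Q$ such that $\xi p$ lies on a chordless cycle for some new arrow $\xi\colon y\to x$. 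The key structural input is Theorem \ref{Thm:TeoBT}(a): the points involved in the shortest paths $\gamma_1,\dots,\gamma_t$ antiparallel to a fixed $\xi$ form a quiver that is essentially a union of paths sharing only their two endpoints, so there is very little room for a derivation to be nonzero.

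The main steps, in order, are: (1) fix a normalised derivation $d$ and an arrow $\a$ of $Q$; by the support description of $E$, $d(\a)$ is a linear combination of (residues of) paths $p$ in $Q$ parallel to $\a$ such that $\xi\cdot(\text{the path through }\a)$ is chordless — equivalently, $p$ replaces $\a$ inside some shortest path $\gamma_i$ antiparallel to some $\xi$. (2) Apply $d$ to each defining relation $\rho_\xi=\sum_i a_i\gamma_i$ of $C$: since $d$ is a derivation, $0=d(\rho_\xi)=\sum_i a_i\,d(\gamma_i)$, and by the Leibniz rule $d(\gamma_i)=\sum$ (substitute $d$ of one arrow of $\gamma_i$, leaving the rest intact). (3) Use the picture in Theorem \ref{Thm:TeoBT}(a) — the $\gamma_i\xi$ share only endpoints, so the paths obtained by such substitutions, once multiplied back by $\xi$, either are not cyclic at all or reduce to chordless cycles we can identify — together with the fact (Theorem \ref{thm:TameCT}(e)/Theorem \ref{Thm:TeoBT}) that $C$ may be assumed constricted, so $\dim C(x,y)=1$ along each arrow, to conclude that the only way $\sum_i a_i\, d(\gamma_i)$ can vanish in $E$ is for each individual substitution term to vanish. (4) Finally invoke Lemma \ref{lem:seq-walks}: the absence of $C$-sequential walks forces that a nonzero value $d(\a)$ supported on such a substituted path would produce a chordless cycle whose relation, read through the Keller potential $W=\sum_\xi \xi\rho_\xi$, exhibits a forbidden configuration; hence $d(\a)=0$ for every arrow $\a$, and $d=0$.

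The main obstacle I expect is step (3): controlling, for a fixed arrow $\a$, all the shortest paths $\gamma_i$ (possibly antiparallel to several different new arrows $\xi$) through which $\a$ could contribute to $E$, and checking that the linear relations coming from $d(\rho_\xi)=0$ over all relevant $\xi$ are enough to kill $d(\a)$ rather than merely constraining it. This is where the global combinatorics of the cyclically oriented quiver — in particular the fact that two chordless cycles meet in at most one arrow or a connected path of arrows, and Theorem \ref{Thm:TeoBT}(b) that every chordless cycle is of the simple form $\xi\gamma$ — has to be used carefully; one likely needs an induction on the length of the arrow's position within the cycle, or on a suitable ordering of the chordless cycles, peeling off the relations one at a time. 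A secondary technical point is to make sure the reduction to $C$ constricted is legitimate here (it is, by Theorem \ref{thm:TameCT}(e) in the euclidean case and automatic in the representation-finite case), and that the argument does not secretly use tameness, since the statement is for the cyclically oriented case in general.
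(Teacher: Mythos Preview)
Your description of $E$ in step~(1) is backwards, and this undermines the rest of the plan. An element of $e_iEe_j$ is a linear combination of paths $u\beta v$ where $u,v$ are paths in $C$ and $\beta$ is a \emph{new} arrow; it is not an old path in $Q$ antiparallel to some new arrow. The shortest paths $\gamma_i$ you describe are the branches of the relations $\rho_\xi$ and live in $C$, not in $E$. With the correct picture, a nonzero $d(\alpha)$ has a summand $u\beta v$ with $\beta$ new and $u,v$ old, and your step~(3) becomes the assertion that the equations $\sum_i a_i\,d(\gamma_i)=0$ in $E$ --- each $d(\gamma_i)$ being a sum of terms where one old arrow is replaced by some $u\beta v$ --- force every individual term to vanish. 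That is a combinatorial statement about how paths through new arrows can sit parallel to old arrows, and your outline gives no mechanism for it beyond the hope that an induction works. (Your worry about constrictedness is a red herring: $C$ is strongly simply connected by Theorem~\ref{Thm:TeoBT}(c), hence has no bypasses, hence is constricted; but this does not do the work you need.)

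The paper's argument is quite different and never touches the relations $\rho_\xi$. It fixes one nonzero value $\delta(\alpha)=u\beta v$ and studies the closed walk $u\beta v\alpha^{-1}$ in $\tilde Q$, chosen of minimal length among all walks of this shape. By Lemma~\ref{lem:seq-walks} this walk is not $C$-sequential, so some subpath of $u$ or $v$ is a branch of a relation in $C$; replacing that branch by the corresponding new arrow $\gamma$ yields a nonoriented cycle $\beta v'\gamma^{-1}v''\alpha^{-1}u$. A case analysis on potential chords --- using only triangularity of $C$, the absence of bypasses, the absence of $C$-sequential walks, and the minimality assumption --- shows this cycle is chordless, contradicting the hypothesis that $B$ is cyclically oriented. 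No system of linear equations and no induction over chordless cycles is needed.
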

\begin{proof}
  It suffices to show that $\Der{0}{C}{E} =0$. Let thus $\d \in \Der{0}{C}{E}$ be nonzero. Then, there exists an old arrow $\a$ from $i$ to $j$, say, such that $\d(\a)\ne 0$. We show that this leads to a contradiction.

  Because $\d$ is normalised, we have
  \[\d(\a)=  \d(e_i \a e_j) = e_i \d(\a) e_j \in e_i E e_j.\]

  Then there exist old  paths $u : i\rightsquigarrow x $ and $v : y\rightsquigarrow j$ as well as a new arrow $\b : x \to y$ such that $\d(\a) = u\b v$. Indeed, $u$ and $v$ consist solely of old arrows, because $E^2 = 0$. If both $u,v$ are trivial paths, then $\a \b^{-1}$ is a chordless cycle in the quiver of $B$, contradicting the fact that $B$ is cyclically oriented. Hence, $u$ or $v$ is nontrivial. On the other hand, $u$ and $v$ do not intersect each other, otherwise the old arrow $\a$ would be a bypass of a path in $C$, and this is impossible because $C$ is strongly simply connected, see Theorem \ref{Thm:TeoBT}, (c). We then have a closed walk $u\b v \a^ {-1}$ in $B$ with $u,v$ old paths and $\b$ a new arrow. Without loss of generality we may assume that this closed walk is of minimal length among all the closed walks of the form $u'\b'v' \a^{-1}$ in $B$ with $u',v'$ old paths and $\b'$ a new arrow.

  \[\begin{tikzcd}[row sep = large, column sep = large]
      i\arrow[r, "\a"]\arrow[ddd, rightsquigarrow, "u"']& j\\
      &\cdot \arrow[d, "\gamma"', bend right]
      \arrow[u, rightsquigarrow, "v''"]\\
      & \cdot \arrow[u, bend right, rightsquigarrow, "\ \ \ \ \ v"'] \\
      x\arrow[r, "\b"]      & y \arrow[u, rightsquigarrow, "v'"]
    \end{tikzcd}
  \]

  Because $\b v \a^{-1}u \b$ is not a $C-$ sequential walk in $B$, there is a subpath of $v\a^ {-1}u$ which is involved in (is a branch of) a relation on $C$. Because $\a$ points to the opposite direction to $u$ or  $v$, this is a subpath of $v$ or of $u$. The absence of $C-$sequential walks in $B$ implies that there is at most one, hence exactly one such subpath that lies either on $v$ or on $u$.

  Assume that this subpath lies on $v$ and $\c$ is the corresponding new arrow. Then there exist subpaths $v'$ and $v''$ of $v$ such that $v'\c^{-1}v''$ is parallel to $v$. We claim that the cycle $\b v'\c^{-1}v''\a^{-1}u$ is chordless, which will give a contradiction to the fact that $B$ is cyclically oriented. Indeed, if his is not the case, then there exists a chord $\eta : a \to b$ with $a,b$ points on the cycle $\b v' \c^{-1}v''\a^{-1}u$.

  We have several possibilities.

  \begin{enumerate}
    \item Both $a,b$ lie on $u$. Assume that $\eta$ is parallel to a subpath of $u$. Then $\eta$ cannot be a new arrow for, otherwise, there exists a relation in $C$ from $b$ to $a$, and, in this case, the subpath of $u$ parallel to $\eta$ together with one branch of the relation constitute an oriented cycle entirely consisting of old arrows which contradicts the triangularity of $C$. Then $\eta$ is an old arrow. But then, it is a bypass to a subpath of $u$, which contradicts Theorem \ref{Thm:TeoBT}(c).

          Assume now that $\eta$ is antiparallel to a subpath of $u$. Then $\eta$ cannot be an old arrow, because $C$ is triangular. Hence, $\eta$ is a new arrow. Consequently, there exists a subpath $u_1$ of $u$ such that we have a $C-$sequential walk $\eta u_1^{-1} \a (v'')^{-1} \c$ in $B$, a contradiction.

    \item The proof is entirely similar if $a,b$ lie on $v', v''$.

    \item Assume $a$ lies on $u$ and $b$ lies on $v$. If $\eta$ is an old arrow, then $\a$ would be a bypass of a path of the form $u_1 \eta v_1$, with $u_1, v_1$ subpaths of $u,v$ respectively. Because this path consists only of old arrows, this yields a contradiction to theorem  \ref{Thm:TeoBT} (c). Therefore $\eta$ is a new arrow. But we may replace the cycle $u\b v\a^{-1}$ by the shorter one $u_1\eta v_1 \a^{-1}$, a contradiction to the minimality (again here $u_1$ or $v_1$ is non trivial for, otherwise, we have a double arrow).

    \item Assume $a$ lies on $v$ and $b$ lies on $u$. If $\eta$ is a new arrow, then there is a relation in $C$ from $b$ to $a$ hence a path $w$ in  $C$ from $b$ to $a$. But then $\a$ is a bypass to $u_1 w v_1$, where $u_1, v_1$ are subpaths of $u,v$ respectively. Because $u_1 w v_1$ consists only of old arrows, this is a path in $C$, hence we get a contradiction to theorem \ref{Thm:TeoBT} (c). Consequently $\eta$ is an old arrow. We thus have a cycle of the form $u_1 \eta^{-1} v_1 \a^{-1}$ with $u_1,v_1$ subpaths of $u,\ v$ respectively, and the cycle consisting entirely of old arrows. Take a cycle of minimal length of the form $u'_1 \eta'^{-1}v'_1\a^{-1}$ with $u'_1, v'_1$ subpaths of $u,v$ respectively and the cycle consisting entirely of old arrows. Notice that $u'_1$ or $v'_1$ is nontrivial, because otherwise the cycle $u'_1 \eta'^{-1} v'_{1}\a^{-1}$ would reduce to an oriented cycle $\eta' \a^{-1}$ in $C$, a contradiction to triangularity. But then the cycle $u'_1 \eta'^{-1} v'_1 \a^{-1}$ is chordless but not oriented.This contradiction completes the proof.

  \end{enumerate}

\end{proof}

\subsection{Indecomposable summands of $_CE_C$}\label{subsec:summandsE} The previous lemma implies that, if $B$ is cyclically oriented, then $\HH{1}{B} = \End{C-C}{E}$. We thus turn to the computation of the latter.

Because $B$ is cyclically oriented, its Keller potential $W$ is the sum of all chordless cycles in the quiver of $B$. Two chordless cycles $\c',\c''$ are equivalent if and only if there exists a sequence of chordless cycles $\c' = \c_1,\c_2\ldots \c_t = \c''$ such that for each $i$, the cycles $\c_i$ and $\c_{i+1}$ share an arrow. Because of the last statement of Theorem \ref{Thm:TeoBT} (a), $\c_i$ and $\c_{i+1}$ cannot share more than one arrow, so they share exactly one.

\begin{Lem}
  The number of indecomposable summands of $E$ as a $C-C$-bimodule equals the potential invariant $N_W$.
\end{Lem}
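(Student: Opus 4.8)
The plan is to exhibit a direct sum decomposition of $_CE_C$ with exactly $N_W$ indecomposable summands, obtained by transporting the finest direct decomposition of the Keller potential through the theorem of \cite{ABDLS-19}. Recall from the discussion preceding the statement that, because $B$ is cyclically oriented, $W$ is the sum of all chordless cycles of the quiver of $B$, that these chordless cycles are precisely the summands of $W$ on which $\sim_W$ is defined, and that $N_W$ is the number of their $\sim_W$-classes.

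First I would fix the \emph{coarse} decomposition. Grouping the chordless cycles according to $\sim_W$, I write $W = \bigoplus_{\mathcal{C}} W_{\mathcal{C}}$, where $\mathcal{C}$ ranges over the $N_W$ equivalence classes and $W_{\mathcal{C}}$ is the sum of the cycles in $\mathcal{C}$; this is a direct decomposition of the potential. By part $(a)$ of the theorem of \cite{ABDLS-19} (applied repeatedly) it induces a direct sum decomposition of $C$-$C$-bimodules $E = \bigoplus_{\mathcal{C}} E_{\mathcal{C}}$, in which $E_{\mathcal{C}}$ is the subbimodule generated by the classes of the new arrows occurring in the cycles of $\mathcal{C}$. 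Each $E_{\mathcal{C}}$ is nonzero, because by Lemma \ref{lem:seq-walks} every cycle of $\mathcal{C}$ carries a new arrow, which then lies in $E_{\mathcal{C}}$; and, these new arrows being pairwise distinct arrows of the quiver, the bimodule top of $E_{\mathcal{C}}$ is precisely their linear span. In particular, by the Krull--Schmidt theorem, the number $s$ of indecomposable summands of $_CE_C$ is at least $N_W$.

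The heart of the argument — and the step where I expect the real work and the use of the cyclically oriented hypothesis — is the reverse inequality, for which it suffices to prove that each $E_{\mathcal{C}}$ is indecomposable. Here I would invoke part $(b)$ of the theorem of \cite{ABDLS-19}, which applies precisely because $B$ is cyclically oriented. Assume $E_{\mathcal{C}} = E' \oplus E''$. Then $E = E' \oplus \widehat{E}$ with $\widehat{E} = E'' \oplus \bigoplus_{\mathcal{C}' \neq \mathcal{C}} E_{\mathcal{C}'}$, so part $(b)$ yields a direct decomposition $W = W' \oplus \widehat{W}$ with $E'$, $\widehat{E}$ generated by the new arrows occurring in the cycles of $W'$, $\widehat{W}$ respectively. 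The cycles of $W'$ and those of $\widehat{W}$ partition the summands of $W$, with no $\sim_W$-relation between the two parts; since all cycles of $\mathcal{C}$ are pairwise $\sim_W$-equivalent, they all lie in the same part, say $\widehat{W}$ (the other case being symmetric). Then no new arrow occurring in a cycle of $\mathcal{C}$ can occur in a cycle of $W'$: such a cycle would share an arrow with a cycle of $\mathcal{C}$, hence be $\sim_W$-equivalent to it, hence lie in $\widehat{W}$, contradicting the partition. Consequently $\operatorname{top}E'$, spanned by the new arrows occurring in cycles of $W'$, meets $\operatorname{top}E_{\mathcal{C}}$, spanned by the new arrows of $\mathcal{C}$, only in $0$; as $E' \subseteq E_{\mathcal{C}}$, this forces $\operatorname{top}E' = 0$, so $E' = 0$ by Nakayama. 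Thus $E_{\mathcal{C}}$ is indecomposable, and $s = N_W$.

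The main obstacle is not conceptual but a matter of careful bookkeeping in this last step: I must keep straight which new arrows generate which of $E'$, $E''$, $E_{\mathcal{C}}$, and I must rely on the fact, recorded after Theorem \ref{Thm:TeoBT}, that each chordless cycle of the quiver of $B$ contains exactly one new arrow, in order to legitimately compare the ``supports'' of $E'$ and $E_{\mathcal{C}}$ through their tops. I would therefore check the argument against the explicit description of the bimodule decompositions given in \cite{ABDLS-19}. Finally, combining the identity just proved with the earlier lemma that $N_W = N_{B,C}$ re-expresses the count through the relation invariant, which is the form needed afterwards.
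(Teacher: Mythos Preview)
Your proof is correct. Both you and the paper begin by transporting the finest direct decomposition $W=\bigoplus_{\mathcal C}W_{\mathcal C}$ through part~(a) of the theorem of \cite{ABDLS-19} to obtain $E=\bigoplus_{\mathcal C}E_{\mathcal C}$ with $N_W$ nonzero summands. The difference lies in how one shows that each $E_{\mathcal C}$ is indecomposable. The paper simply invokes the bijection between indecomposable summands of $W$ and those of $_CE_C$ already recorded at the end of \S\ref{sec:DecPotential}: that passage argues directly (using part~(a) and the fact that every indecomposable summand of $E$ contains a new arrow in its top, hence is captured by some equivalence class) that the count matches, with the details deferred to the concrete description in \cite{ABDLS-19}. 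You instead give a self-contained argument: assuming $E_{\mathcal C}=E'\oplus E''$, you apply part~(b) of the \cite{ABDLS-19} theorem to the induced decomposition of $E$, obtain a compatible direct decomposition $W=W'\oplus\widehat W$, and then use the connectedness of $\mathcal C$ under $\sim_W$ together with a comparison of tops to force one of $E',E''$ to vanish. Your route makes the role of the cyclically oriented hypothesis explicit (it enters precisely through the applicability of part~(b)), at the cost of a longer argument; the paper's route is a one-line citation but leans on the earlier general discussion. One minor remark: your aside that the new arrows carried by the cycles of $\mathcal C$ are ``pairwise distinct'' is not needed and is not literally what follows from Lemma~\ref{lem:seq-walks} alone---two cycles in $\mathcal C$ may well share their new arrow---but this does not affect your argument, since all you use is that these arrows span $\operatorname{top}E_{\mathcal C}$.
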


\begin{proof}
  Write $W = W_1 \oplus W_2 \oplus \cdots \oplus W_s$ where the $W_i$ are the indecomposable summands of $W$. Each $W_i$ is the sum of equivalent chordless cycles and no cycle which is a summand of $W_i$ is equivalent to a cycle which is a summand of $W_j$ for $j\ne i$. Therefore the number $s$ of summands of $W$ equals the number of equivalence classes of cycles, which is precisely the potential invariant $N_W$. Because, as pointed out at the end of section  \ref{sec:DecPotential}, there is a bijection between the indecomposable summands of the potential and those of $_CE_C$, we infer the statement.
\end{proof}

We have proven in lemma \ref{subsec:CycArrowEquiv} that the same statement holds true for tame cluster tilted algebras.

\subsection{Endomorphisms of $E$}\label{subsec:EndoE} Let thus $E_1,\ldots, E_{N_W}$ denote the indecomposable summands of $_CE_C$. In order to prove that $\HH{1}{B} \cong \End{C-C}{E}$ is $N_W-$dimensional, we need to prove that the $E_i$ are pairwise orthogonal bricks in the category of $C-C$-bimodules. We start with the following lemma.

\begin{Lem}\label{Lem:EndoE}
  With the above notation, for every nonzero $\d\in\Hom{C-C}{E_i}{E}$ and every new arrow $\a$ in $E_i$, we have
  \[ \d(\a) = \l_\a \a\] 
  for some scalar $\l_\a\in \K$
\end{Lem}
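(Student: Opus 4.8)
The plan is to work with the $C$-$C$-bimodule map $\d\colon E_i\to E$ and a new arrow $\a\colon x\to y$ lying in the support of $E_i$, and to pin down $\d(\a)$ by exploiting the very rigid shape of the quiver of $B$ near a chordless cycle described in Theorem \ref{Thm:TeoBT}(a)--(b). First I would note that since $\a\in e_x E_i e_y$ and $\d$ is a bimodule map, $\d(\a)\in e_x E e_y$; since $E^2=0$, any element of $e_x E e_y$ is a linear combination of paths of the form $u\b v$ with $u,v$ old paths and $\b$ a single new arrow. So we may write $\d(\a)=\sum_k \l_k\, u_k\b_k v_k$. The goal is to show that the only surviving term is the trivial-path term $\l_\a\a$, i.e. that every term with $u_k$ or $v_k$ nontrivial, or with $\b_k\ne\a$, must vanish.

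The key step is a bypass/chordless-cycle argument, entirely parallel in spirit to the proof of Lemma \ref{Lem:DerivCintoE}. Suppose some term $u\b v$ with $(u,v)\ne(e_x,e_y)$ or $\b\ne\a$ occurs with nonzero coefficient in $\d(\a)$. Then $u\b v$ is a path in $\tilde Q$ antiparallel to $\a$, so $\a^{-1}u\b v$ (read appropriately) is a closed walk through the new arrow $\b$. If $u$ and $v$ were both trivial and $\b\ne\a$ we would get a double arrow $x\rightrightarrows y$, contradicting that $B$ is cyclically oriented (no multiple arrows). If $u$ or $v$ is nontrivial, I would first argue $u$ and $v$ cannot meet (else $\a$ is a bypass of an old path, contradicting the strong simple connectedness of $C$ from Theorem \ref{Thm:TeoBT}(c)); then, choosing such a term with $u\b v$ of minimal length and running through the same case analysis as in Lemma \ref{Lem:DerivCintoE} — chords with both endpoints on $u$, both on $v$, or one on each, each subcase eliminated using triangularity of $C$, the bypass-freeness from \ref{Thm:TeoBT}(c), and the absence of $C$-sequential walks (Lemma \ref{lem:seq-walks}) — one shows $\a^{-1}u\b v$ can be taken chordless, contradicting cyclic orientedness. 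Hence the only possible surviving term is $\l_\a\a$ itself, which is exactly the claim. (One should also observe $\b$ must lie in $E_i$: the minimal chordless cycle produced shares the arrow $\a$ with a cycle of $W_i$, forcing $\b=\a_{\c}$ for a cycle $\c\sim_W$ the one through $\a$; but for the statement as phrased it is enough to conclude $\d(\a)\in\K\a$.)

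The main obstacle is the bookkeeping in the chord case-analysis: one must be careful that the minimality is invoked for the right quantity (length of $u\b v$, respectively of an auxiliary old cycle when a chord is an old arrow antiparallel to a subpath), and that in the "one endpoint on $u$, one on $v$" subcases the replacement cycle produced is genuinely shorter and still of the admissible form, so that no circularity creeps in. I expect this to be essentially a transcription of the argument already carried out in Lemma \ref{Lem:DerivCintoE}, with $\d(\a)=u\b v$ there replaced by a single monomial summand of $\d(\a)$ here, so the reuse of that argument is the natural way to keep the proof short.
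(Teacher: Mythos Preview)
Your plan matches the paper's proof: express $\d(\a)$ as a combination of monomials $u\b v$ with one new arrow each, take a nontrivial one of minimal length, and derive a non-oriented chordless cycle from the closed walk $u\b v\a^{-1}$ via a chord case-analysis, contradicting cyclic orientedness.

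There is one genuine slip. Your reason why $u$ and $v$ cannot meet --- ``$\a$ would be a bypass of an old path, contradicting strong simple connectedness of $C$'' --- is lifted from Lemma~\ref{Lem:DerivCintoE}, where $\a$ was \emph{old}. Here $\a$ is a \emph{new} arrow, hence not in $C$, so no bypass in $C$ is produced and Theorem~\ref{Thm:TeoBT}(c) does not apply. The paper's argument is instead by triangularity: if $u$ and $v$ share a vertex $z$, the old subpaths $x\to z$ (from $u$) and $z\to y$ (from $v$) compose with a branch of the relation corresponding to $\a$ (an old path $y\to x$) to give an oriented cycle in $C$.

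Be aware also that the chord analysis is not a literal transcription of Lemma~\ref{Lem:DerivCintoE}. There, because $\a$ was old, a preliminary step was needed: one first used the absence of $C$-sequential walks to locate an auxiliary new arrow $\c$ on $u$ or $v$, and then analysed the longer cycle $\b v'\c^{-1}v''\a^{-1}u$. Here both $\a$ and $\b$ are already new, so one works directly with $u\b v\a^{-1}$; the four chord cases use the same toolkit (triangularity, bypass-freeness, absence of $C$-sequential walks, minimality), but the specific contradictions are arranged differently --- for instance, when the chord is an old arrow going from a point of $u$ to a point of $v$, the paper closes an oriented cycle in $C$ using a branch of the relation $\rho_1$ corresponding to $\a_1$, rather than invoking a bypass. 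The paper itself remarks on this difference immediately after the proof.
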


\begin{proof}
  Denote by $\{\a_1,\ldots,\a_t\}$ the set of new arrows and by $\{\rho_1,\ldots, \rho_t\}$ the corresponding relations in $C$, so that the Keller potential is
  \[W = \sum_{i=1}^t \a_i \rho_i\]

  We may assume that the equivalence class of the chordless cycle $\a_1 \rho_1 $ contains as new arrows $\a_1, \ldots ,\a_r$ with $r\leqslant t$ For $\d$ as in the statement and any $i$ with $1\leqslant i \leqslant r$ we have
  \[ \d(\a_i) = \sum_j \l_{ij} u_{ij} \a_j v_{ij}\]

  where the $\l_{ij}$ are scalars and $u_{ij}, v_{ij}$ are paths such that $\a_i$ and $u_{ij}\a_j v_{ij}$ are parallel. Note that the absence of double arrows in $B$ implies that, for each $j$, the path $u_{ij}$ or the path $v_{ij}$ is nontrivial. Moreover, the fact that $E^2 =0$ implies that $u_{ij}, v_{ij}$ are paths in $C$.

  We claim that $\l_{ij}=0$ when $i\ne j$ and this implies that $\d(\a_i) = \l_{ii}\a_i$ (for otherwise, the nontriviality of $u_{ij}$ or $v_{ij}$ would  imply a contradiction to the triangularity of $C$). We may assume without loss of generality that $i=1$ so that $j\ne 1$. The paths $u_{1j}, v_{1j}$ do not intersect for, otherwise, we have an oriented cycle in $C$, a contradiction. We consider the cycle $u_{1j}\a_j v_{1j}\a_1^{-1}$. This is a cycle which we may assume of minimal length among all cycles of the form $u'\a_jv'a_1^{-1}$ with $u',v'$ paths in $C$. If it is chordless, then we are done because it is not oriented. Therefore we may assume that it has a chord $\b : a \to b$. We study the different possibilities for $b$. For ease of notation, we set $u=u_{1j}, v=v_{1j}$.

  \begin{enumerate}
    \item Both $a$ and $b$ lie on $u$. Assume that $\b : a\to b$ is an old arrow. If $\b$ is parallel to $u$ then it is a bypass of a subpath of $u$, a contradiction to Theorem \ref{Thm:TeoBT} (c). If $\b$ is antiparallel to $u$, then it generates with the subpath of $u$ from $b$ to $a$ an oriented cycle in $C$, a contradiction to its triangularity. Therefore $\b$ is a new arrow. If $\b : a \to b$ is parallel to $u$, then it corresponds to a relation from $b$ to $a$ and so generates an oriented cycle in $C$, a contradiction to triangularity. But then $\b$ is antiparallel to  $u$ and $\b$ together with $\a_1$ yield a $C-$sequential walk in $B$, another contradiction.

    \item The situation is exactly similar if $\b$ is an arrow between two points of $v$.

    \item Assume $a$ lies on $v$ and $b$ lies on $u$. If $\b:a \to b$ was a new arrow, then it would form with $\a_1$ (or $\a_j$) a $C-$sequential walk in $B$. Therefore it is an old arrow. But then this is a $C-$sequential walk of the form $\a_1 v_1^{-1}\b u_1^{-1} \a_1$ with $u_1, v_1$ subpaths of $u,v$ respectively. Therefore there is no such chord $\b$.

    \item The only possibility left is that $a$ lies on $u$ and $b$ lies on $v$. If $\b : a \to b$ is an old arrow, then there is an oriented cycle in $C$ consisting of $\b$, a subpath of $v$, a branch of the relation $\rho_1$ corresponding to $\a_1$ and a subpath of $u$. This contradiction implies that $\b$ is  a new arrow. But then we have a cycle of the form $u'\b v'\a_1^{-1}$ with $u',v'$ paths in $C$ and $\b,\a_1$ new arrows, a contradiction the the assumed minimality of the cycle $u \a_j v\a_1^{-1}$.
  \end{enumerate}
\end{proof}

This proof resembles that of lemma \ref{Lem:DerivCintoE}. The essential difference is that in \ref{Lem:DerivCintoE} the arrow $\a$ is old while the arrow $\b$ is new, but here both of the arrows $\a_1$ and $\a_j$ are new.

\subsection{Pairwise orthogonal bricks}\label{subsec:OrthBrocks} We now prove that the indecomposable summands of $_CE_C$ are pairwise indecomposable bricks.

\begin{Lem} With the above notation, we have \[\dim{\Hom{C-C}{E_i}{E_j}} = \begin{cases} 0 & \mbox{ if } i \neq j, \\
      1 & \mbox{ if } i = j.\end{cases}\]

\end{Lem}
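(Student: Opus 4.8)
The plan is to leverage Lemma \ref{Lem:EndoE} to pin down every bimodule morphism $E_i\to E_j$ on new arrows, and then show that, unless $i=j$, no nonzero such morphism can be compatible with the relations that $E$ satisfies. First I would recall that the new arrows appearing in $E_i$ are exactly the $\alpha_c$ for $c$ a chordless cycle in the $i$-th equivalence class; write $\{\alpha_1,\dots,\alpha_r\}$ for those in $E_i$. By Lemma \ref{Lem:EndoE}, any nonzero $\delta\in\Hom{C-C}{E_i}{E}$ satisfies $\delta(\alpha_k)=\lambda_k\alpha_k$ for scalars $\lambda_k$, where I must be slightly careful: the lemma is stated for $\d\in\Hom{C-C}{E_i}{E}$, and $\Hom{C-C}{E_i}{E_j}$ sits inside it as the subspace of morphisms whose image lies in $E_j$. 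Since $\delta(\alpha_k)=\lambda_k\alpha_k$ lies in $E_i$ (the bimodule generated by $\alpha_k$ is a summand of $E_i$), the only way this can lie in $E_j$ is to be zero when $i\neq j$; hence $\Hom{C-C}{E_i}{E_j}=0$ for $i\neq j$, giving the first case immediately.

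For the case $i=j$, the upper bound $\dim\Hom{C-C}{E_i}{E_i}\le 1$ is the real content. Again by Lemma \ref{Lem:EndoE}, $\delta$ is determined by the scalars $\lambda_1,\dots,\lambda_r$ attached to the new arrows $\alpha_1,\dots,\alpha_r$ of the $i$-th class; the plan is to show all the $\lambda_k$ must coincide. Here I would use the combinatorial structure of the equivalence class recalled just before the lemma: any two cycles in the class are connected by a chain of chordless cycles, consecutive ones sharing exactly one arrow (by Theorem \ref{Thm:TeoBT}(a), the last statement). So it suffices to treat two chordless cycles $c_k=\alpha_k\rho_k$ and $c_l=\alpha_l\rho_l$ sharing an arrow $\eta$. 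Since each cycle contains exactly one new arrow (Lemma \ref{lem:seq-walks}), $\eta$ is old, and it is a common branch: $\rho_k$ contains a path factoring through $\eta$ and so does $\rho_l$. Writing $\rho_k=\sum a_p u_p$ and using that $\delta$ is a $C$-$C$-bimodule map fixing the old arrows, I would apply $\delta$ to the relation $\partial_\eta(W)=0$ in $B$ — equivalently to the identity in $E$ coming from the fact that $E$ is the relation bimodule — in which the summands are exactly the $\alpha_k\rho_k'$ and $\alpha_l\rho_l'$ (the cofactors of $\eta$), forcing $\lambda_k=\lambda_l$. Concretely: $0=\delta(\partial_\eta W)=\partial_\eta(\delta\text{-twisted }W)$ has the terms $\lambda_k$ (times a path) from the $k$-cycle and $\lambda_l$ (times the same or a parallel path) from the $l$-cycle, and since these are linearly independent in $e_xEe_y$ unless their coefficients cancel, we get $\lambda_k=\lambda_l$. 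Running along the connecting chain gives $\lambda_1=\dots=\lambda_r=:\lambda$, so $\delta=\lambda\cdot\mathrm{id}_{E_i}$, proving $\dim\Hom{C-C}{E_i}{E_i}\le 1$. The reverse inequality is clear since $\mathrm{id}_{E_i}$ is a nonzero endomorphism.

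The main obstacle I anticipate is the last step: making precise why the two relevant summands of $\partial_\eta W$, coming from the two adjacent chordless cycles, are linearly independent in the appropriate $e_xEe_y$, so that applying the twisted derivation really forces $\lambda_k=\lambda_l$ rather than merely a linear relation among several $\lambda$'s. This rests on Theorem \ref{Thm:TeoBT}(a)--(b): a chordless cycle is $\xi\gamma$ for a shortest path $\gamma$, the shortest paths antiparallel to a fixed arrow share only endpoints, and the new arrow of the cycle is its unique non-old arrow; together these control exactly which parallel paths can occur and prevent unwanted coincidences or cancellations. One must also check the bookkeeping that $\eta$, being the shared old arrow, is genuinely a branch of both relations $\rho_k,\rho_l$ in the direction making $\partial_\eta$ nontrivial on both cycles — this is where sharing \emph{exactly} one arrow (not more) is used, via the last sentence of Theorem \ref{Thm:TeoBT}(a). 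Once that linear-independence is secured, the propagation along the connecting chain is routine, and the lemma follows.
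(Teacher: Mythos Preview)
Your overall strategy matches the paper's: use Lemma~\ref{Lem:EndoE} to reduce to showing that all the scalars $\lambda_k$ coincide, then force this by applying $\delta$ to a relation in $E$. The case $i\neq j$ is handled exactly as in the paper. For $i=j$, the paper organises the final step a little differently. It picks $\alpha_1$, observes that $\alpha_1$ lies in a chordless cycle, so some old arrow of that cycle is antiparallel to a relation in $B$ involving $\alpha_1$; by the lemma in~\ref{subsec:notation} this relation may be taken \emph{strongly minimal}, say $\rho=\sum_l\mu_l\, w_l\alpha_l w'_l$. Applying $\delta$ gives $\sum_l\mu_l\lambda_l\, w_l\alpha_l w'_l=0$, and subtracting $\lambda_1\rho$ leaves a relation supported on the proper subset $\{l:\lambda_l\neq\lambda_1\}$, which strong minimality forces to be empty. (The paper then leaves the propagation across the whole equivalence class implicit; your explicit chain argument is a genuine improvement in clarity on that point.)

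The gap in your version is precisely the step you flag as the obstacle. The summands of $\partial_\eta W$ are \emph{not} linearly independent in $e_xEe_y$ --- they satisfy the relation $\partial_\eta W=0$ itself --- so ``linearly independent unless their coefficients cancel'' is not correct as stated, and the combinatorics of Theorem~\ref{Thm:TeoBT}(a)--(b) alone (paths sharing only endpoints) gives linear independence in $\Bbbk\tilde Q$, not in $E$. What you actually need is that $\partial_\eta W$ is, up to scalar, the \emph{only} relation among those summands; this is exactly strong minimality. The fix is to invoke the lemma in~\ref{subsec:notation} (the branches of $\partial_\eta W$ are shortest paths and hence contain no oriented cycles, so the lemma applies) to replace $\partial_\eta W$ by a strongly minimal relation; then all $\lambda_l$ occurring in it are forced equal at once, and your chain argument carries this across the entire equivalence class. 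Note also that $\partial_\eta W$ may have more than two terms (one per chordless cycle through $\eta$), so the pairwise phrasing ``$\lambda_k=\lambda_l$'' should really be ``all $\lambda_l$ in this relation are equal''; strong minimality delivers this directly.
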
\label{Lem:OrhBricks}

\begin{proof}
  Assume first that $i\ne j$. Then $\Hom{C-C}{E_i}{E_j} = 0$ follows directly from lemma \ref{Lem:EndoE}. Therefore we just need to prove that for each $i$ with $1 \leqslant i \leqslant N_W$ we have
  \[ \End{C-C}{E_i} \simeq \Bbbk. \]

  As $C-C$-bimodule, $E_i$ is generated by those new arrows $\a_1,\ldots,\a_r$ which occur in the chordless cycles in the equivalence class corresponding to $E_i$. It follows from Lemma \ref{Lem:EndoE} that for every $\d \in \End{C-C}{E_i}$, and every $j$ with $1\leqslant j \leqslant r$  we have
  \[ \d(\a_j) = \l_j \a_j\]
  for some scalar $\l_j$. We claim that $\l_j = \l_1$. This will establish the statement.

  The new arrow $\a_1$ belongs to a cycle in the potential. Because there are no loops in the quiver of $B$, each arrow appearing in the potential is antiparallel to a relation. Therefore there exists a relation in $B$ involving the arrow $\a_1$. Because of lemma \ref{subsec:notation}, we may assume this relation to be strongly minimal, that is a relation of the form
  \[ \rho = \sum_{l=1}^r \mu_l (w_l\a_l w'_l)\]
  where $w_l, w'_l$ are old paths, the $\mu_l$ are nonzero scalars and, for every proper subset $J \subset \{1,\ldots, r\}$ and every set of nonzero scalar $\mu_l'$ with $l\in J$, we have
  \[ \rho = \sum_{l\in J} \mu'_l (w_l\a_l w'_l)\ne 0. \]
  Applying $\d$ to the relation $\rho$ yields
  \[ 0 = \d(\rho) =\sum_{l=1}^r \mu_l\l_l (w_l\a_l w'_l) \]
  subtracting from this expression $\l_1 \rho$ we get
  \[\sum_{l\ne 1} \mu_l (\l_l - \l_1) (w_l\a_l w'_l) =0.  \]
  The strong minimality of $\rho$ and the fact that $\mu_l\ne 0$ for every $l$ imply that $\l_l = \l_1$ for every $l$.

\end{proof}

The previous two lemmata can be interpreted as saying that every derivation of $B$ is diagonalisable.

\subsection{Proof of the main theorem in the cyclically oriented case}\label{subsec:ProofMain} For the benefit of the reader we repeat the statement of the theorem in the cyclically oriented case.

\begin{Thm} Let $B$ be a cyclically oriented cluster tilted algebra having $N_W$ as potential invariant, $C$ a tilted algebra such that $B$ is the relation extension of $C$and $E = \Ext{C}{2}{DC}{C}$. Then we have
  \[ \dim{\HH{1}{B}} = N_W = \dim{\End{C-C}{E}}.\]

  Moreover, the indecomposable summands of $_CE_C$ are pairwise orthogonal bricks.
\end{Thm}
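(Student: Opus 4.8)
The plan is to assemble the theorem from the pieces that have already been established in Sections 2 and 3. For the cyclically oriented case, the starting point is the short exact sequence of Theorem \ref{thm:TameCT}(a), together with Theorem \ref{Thm:TeoBT}(c), which tells us that any tilted algebra $C$ with $B = C\ltimes E$ is strongly simply connected, hence $\HH{1}{C} = 0$. This already yields $\HH{1}{B}\simeq \Ho{1}{B}{E}$. Applying Theorem \ref{thm:TameCT}(b) (or \cite[4.8]{AGST-16}) gives $\Ho{1}{B}{E}\cong \Ho{1}{C}{E}\oplus\End{C-C}{E}$, and Lemma \ref{Lem:DerivCintoE} shows $\Ho{1}{C}{E} = 0$, so $\HH{1}{B}\cong\End{C-C}{E}$.

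It then remains to identify $\dim\End{C-C}{E}$ with $N_W$ and to prove the orthogonal-bricks statement. Here I would invoke Lemma \ref{Lem:OrhBricks}: if $E = \bigoplus_{i=1}^{s} E_i$ is the decomposition into indecomposable $C$-$C$-bimodule summands, then $\dim\Hom{C-C}{E_i}{E_j} = \delta_{ij}$, which says precisely that the $E_i$ are pairwise orthogonal bricks, and hence $\dim\End{C-C}{E} = s$. Finally, the number $s$ of indecomposable summands of $_CE_C$ equals $N_W$: this is the content of the Lemma in \ref{subsec:summandsE}, which uses the bijection between indecomposable summands of the potential and of $_CE_C$ recorded at the end of Section \ref{sec:DecPotential}, and the fact that the Keller potential of a cyclically oriented $B$ is the sum of all chordless cycles, so that its indecomposable direct summands correspond exactly to the $\sim_W$-classes. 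Chaining these identifications gives $\dim\HH{1}{B} = \dim\End{C-C}{E} = s = N_W$, with the orthogonal-bricks assertion as a byproduct.

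For the tame cases of type $\tilde{\mathbb D}$ or $\tilde{\mathbb E}$, I would argue slightly differently, relying directly on Theorem \ref{thm:TameCT}(a), (e): one may assume $C$ constricted, in which case $\Ho{1}{C}{E} = 0$, $\HH{1}{C} = 0$, and $\Ho{1}{B}{E} = \End{C-C}{E} = \K^{N_{B,C}}$, with $E$ a direct sum of $N_{B,C}$ pairwise orthogonal bricks. Combining with the short exact sequence again gives $\dim\HH{1}{B} = N_{B,C}$, and Lemma \ref{subsec:CycArrowEquiv} identifies $N_{B,C}$ with $N_W$. Thus in all the asserted cases $\dim\HH{1}{B} = N_W = \dim\End{C-C}{E}$ and the indecomposable summands of $_CE_C$ are pairwise orthogonal bricks.

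The proof is essentially a bookkeeping exercise once the hard technical lemmata are in hand, so no single step of the final assembly is a serious obstacle; the genuine work has already been done in Lemma \ref{Lem:DerivCintoE} (vanishing of $\Ho{1}{C}{E}$) and Lemma \ref{Lem:EndoE}/Lemma \ref{Lem:OrhBricks} (the brick and orthogonality statements), whose proofs rest on the combinatorics of chordless cycles and the absence of $C$-sequential walks. The one point that requires a little care in the write-up is making sure the cyclically oriented argument and the tame-$\tilde{\mathbb D}$/$\tilde{\mathbb E}$ argument are stated as genuinely separate cases, since in the latter the vanishing of $\Ho{1}{C}{E}$ comes from constrictedness (Theorem \ref{thm:TameCT}(e)) rather than from Lemma \ref{Lem:DerivCintoE}.
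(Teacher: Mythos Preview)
Your proposal is correct and follows essentially the same route as the paper: the short exact sequence of Theorem \ref{thm:TameCT}(a) together with $\HH{1}{C}=0$ from Theorem \ref{Thm:TeoBT}(c), then the splitting $\Ho{1}{B}{E}\cong\Ho{1}{C}{E}\oplus\End{C-C}{E}$, Lemma \ref{Lem:DerivCintoE} for the vanishing of $\Ho{1}{C}{E}$, Lemmata \ref{Lem:EndoE} and \ref{subsec:OrthBrocks} for the orthogonal-bricks statement, and Lemma \ref{subsec:summandsE} for $s=N_W$. The only remark is that the theorem as stated here covers only the cyclically oriented case, so your additional paragraph on types $\tilde{\mathbb D}$ and $\tilde{\mathbb E}$ is superfluous for this particular statement (it belongs instead to Section \ref{subsec:TameCase}).
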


\begin{proof}
  The proof was outlined at the beginning  of section \ref{sec:TheProof}. It follows from lemma \ref{Lem:DerivCintoE} that $\Ho{1}{C}{E} = 0$. On the other hand, lemmata \ref{subsec:EndoE} and \ref{subsec:OrthBrocks} show that the indecomposable summands of $_CE_C$ are pairwise orthogonal bricks. The fact that their number equals the potential invariant follows from lemma \ref{subsec:summandsE}.
\end{proof}


\subsection{The representation-finite case} Let $Q$ be the quiver of a representation-finite cluster tilted algebra. An arrow in $Q$ is called an \emph{inner arrow} if it belongs to two chordless cycles. We deduce from our main result above the following corollary, which is \cite[Theorem 1.2]{ARS15}.

\begin{Cor} Let $B$  be a representation-finite cluster tilted algebra and $Q$ its quiver. Then the dimension of $\HH{1}{B}$ equals the number of chordless cycles in $Q$ minus the number of inner arrows in $Q$.

\end{Cor}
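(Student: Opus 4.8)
The plan is to combine the main theorem just proved with an elementary count of cycles and arrows in the quiver of a representation-finite cluster tilted algebra $B$. By the main theorem, $\dim \HH{1}{B} = N_W$, the potential invariant, which (since $B$ is representation-finite, hence cyclically oriented by \cite{BMR06}) equals the number of equivalence classes of chordless cycles in $Q$ under the relation $\sim_W$ generated by ``sharing an arrow''. Recall from Theorem \ref{Thm:TeoBT}(a) that two chordless cycles share \emph{at most one} arrow, and by definition an inner arrow is one lying in exactly two chordless cycles (an arrow in a representation-finite cluster tilted quiver lies in at most two chordless cycles, since the relevant quivers have no arrow belonging to three or more chordless cycles). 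So the plan is to set up the incidence structure of chordless cycles and inner arrows as a graph and compute the number of connected components.

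Concretely, I would form the graph $G$ whose vertices are the chordless cycles of $Q$ and whose edges are the inner arrows, an inner arrow $\xi$ giving the edge between the two chordless cycles it belongs to (well-defined since an inner arrow belongs to exactly two such cycles, and any two cycles sharing an arrow share exactly that one arrow by Theorem \ref{Thm:TeoBT}(a)). Then $N_W$ is exactly the number of connected components of $G$, since $\c \sim_W \c'$ precisely when $\c$ and $\c'$ are joined by a path in $G$. The key combinatorial input is that $G$ is a \emph{forest}: the quivers of representation-finite cluster tilted algebras have the property that distinct chordless cycles cannot form a ``cycle of cycles'' — this follows from the block decomposition of such quivers established in \cite{BMR06} (each quiver is glued from blocks of a few simple shapes along vertices, and the gluing pattern along inner arrows is tree-like). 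Granting that $G$ is a forest with $c$ vertices (the number of chordless cycles) and $a$ edges (the number of inner arrows), the number of connected components is $c - a$, which gives
\[ \dim \HH{1}{B} = N_W = (\text{number of chordless cycles}) - (\text{number of inner arrows}). \]

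The step I expect to be the main obstacle is justifying that $G$ is a forest, equivalently that there is no sequence of distinct chordless cycles $\c_1, \c_2, \ldots, \c_k = \c_1$ with consecutive ones sharing an inner arrow and no repetitions — i.e. that the chordless cycles do not themselves enclose a larger cycle in $Q$. I would argue this by contradiction: such a configuration would produce, after deleting the inner arrows involved, a non-oriented chordless cycle or a proper bypass in $Q$, contradicting either the cyclic orientation of $Q$ (Theorem \ref{Thm:TeoBT}) or the fact that any admissible cut of $B$ is strongly simply connected and bypass-free (Theorem \ref{Thm:TeoBT}(c) and the remark following it). Alternatively, and perhaps more cleanly, one invokes the explicit classification of representation-finite cluster tilted quivers from \cite{BMR06} as quivers built from the standard blocks, where the tree structure of the gluing is manifest. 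With that in hand the corollary is immediate from the main theorem.
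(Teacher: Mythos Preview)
Your proposal is correct and follows essentially the same route as the paper: both invoke the main theorem to identify $\dim \HH{1}{B}$ with $N_W$, reinterpret $\sim_W$ as adjacency of chordless cycles via inner arrows, and then perform a count. The paper's proof is considerably terser than yours: it records the key fact that in the representation-finite case all relations are monomial or binomial (so every arrow lies on at most two chordless cycles, making ``inner arrow'' well-defined as one lying on exactly two), restates $\sim_W$ in terms of chains of cycles linked by inner arrows, and then simply asserts that the number of classes equals (chordless cycles) $-$ (inner arrows).

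What you flag as the main obstacle --- that the incidence graph $G$ of chordless cycles along inner arrows is a forest --- is exactly the point the paper's proof passes over in its final ``Therefore''. Your formulation makes explicit the step that is being used: without acyclicity of $G$ one would only get (chordless cycles) $-$ (inner arrows) $+ b_1(G)$. The paper does not spell this out, effectively leaving it to the monomial/binomial structure and the reference to \cite{ARS15}, where the result originates. Your two suggested justifications (a contradiction via a non-oriented chordless cycle or bypass arising from a putative cycle of cycles, or the block decomposition of \cite{BMR06}) are both reasonable; the second is the cleaner route and matches the spirit of how such facts are usually handled in this literature.
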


\begin{proof}
  In the representation-finite case, relations are monomial or binomial relations. Two chordless cycles $\c'$ and $\c''$ are equivalent if and only if there is a sequence of chordless cycles $\c' = \c_1, \c_2,\ldots, \c_t = \c''$ such that for every $i$, the cycle $\c_i$ shares exactly one arrow with $\c_{i+1}$. That is, $\c_i$ is connected to $\c_{i+1}$ by an inner arrow. Therefore, the total number of equivalence classes equals the number of chordless cycles minus the number of inner arrows.
\end{proof}

\subsection{Relation with the fundamental group} For the definition and properties of the fundamental group of a bound quiver, we refer, for instance, to \cite{Skow92}.

\begin{Cor} Let $B = \K \tilde{Q} / \tilde{I}$ be a cyclically oriented cluster tilted algebra. Then
  \begin{enuma}
    \item The fundamental group $\pi_1(\tilde{Q}, \tilde{I})$ does not depend on the presentation of $B$,
    \item We have $\HH{1}{B} \cong \Hom{}{\pi_1(\tilde{Q}, \tilde{I})}{\K^+}$.
  \end{enuma}
\end{Cor}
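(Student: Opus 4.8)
The plan is to derive the corollary as a fairly direct consequence of the main theorem (the cyclically oriented case of Theorem A) together with standard facts relating the first Hochschild cohomology group, the group of characters of the fundamental group, and the invariance properties we have already established. For part $(a)$, I would first recall that for a bound quiver algebra $B = \K\tilde Q/\tilde I$ one has a presentation-dependent fundamental group $\pi_1(\tilde Q,\tilde I)$, and that by Theorem \ref{Thm:TeoBT}$(c)$ any admissible cut $C$ of $B$ is strongly simply connected, in particular simply connected. The key structural input is that, for a cyclically oriented cluster tilted algebra, the Keller potential $W$ — hence the ideal $\tilde I$ generated by its cyclic partial derivatives — is completely determined by the quiver $\tilde Q$ (this is the consequence of Theorem \ref{Thm:TeoBT}$(a)$–$(b)$ noted in the text right after that theorem). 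So although a priori there could be many presentations of $B$, the system of minimal relations is, up to the obvious normalisation, canonical, and therefore $\pi_1(\tilde Q,\tilde I)$ computed from this minimal system does not depend on the presentation.

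For part $(b)$, I would invoke the general principle, due to work of Assem--de la Peña and others (and used in \cite{Skow92}), that for a triangular algebra given by a bound quiver admitting a system of minimal relations, there is an exact sequence relating $\HH{1}{B}$ to $\Hom{}{\pi_1(\tilde Q,\tilde I)}{\K^+}$; more precisely, when $B$ is \emph{constricted} (here $\dim_\K B(x,y)=1$ for every arrow, which holds because cyclically oriented quivers have no multiple arrows, so $B$ has no parallel old arrows, and the new arrows introduced are governed by minimal relations) one gets an isomorphism $\HH{1}{B}\cong\Hom{}{\pi_1(\tilde Q,\tilde I)}{\K^+}$. Thus it suffices to check the constricted hypothesis and cite the relevant result. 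Alternatively, and perhaps more in the spirit of this paper, I would compute $\dim_\K\Hom{}{\pi_1(\tilde Q,\tilde I)}{\K^+}$ directly: since $\K$ is algebraically closed and $\K^+$ is torsion-free, $\Hom{}{\pi_1}{\K^+}\cong\Hom{}{\pi_1^{\mathrm{ab}}\otimes_{\mathbb Z}\K}{\K}$, and $\pi_1^{\mathrm{ab}}\otimes\K$ is a $\K$-vector space whose dimension equals (number of arrows of $\tilde Q$) $-$ (rank of the boundary map coming from the minimal relations) $-$ (number of points) $+ 1$, i.e.\ the first Betti number of the CW-complex attached to $(\tilde Q,\tilde I)$. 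One then matches this count against $N_W$: the minimal relations are in bijection with the arrows occurring in chordless cycles (Theorem \ref{Thm:TeoBT}$(a)$), and a careful bookkeeping of how these relations identify homotopy classes of walks shows the Betti number equals the number of equivalence classes of chordless cycles under $\sim_W$, which is $N_W$. Combined with the main theorem, $\dim\HH{1}{B}=N_W=\dim\Hom{}{\pi_1(\tilde Q,\tilde I)}{\K^+}$.

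Having the equality of dimensions, the isomorphism in $(b)$ follows once one produces a natural injective (or surjective) map between the two spaces; the canonical candidate is the map $\HH{1}{B}\to\Hom{}{\pi_1(\tilde Q,\tilde I)}{\K^+}$ sending the class of a normalised derivation $d$ to the character $w\mapsto$ (the scalar by which $d$ scales along the closed walk $w$) — this is well defined precisely because, as Lemmata \ref{Lem:EndoE} and \ref{Lem:OrhBricks} show, every derivation of $B$ is ``diagonalisable'', i.e.\ acts on each new arrow $\a$ by a scalar $\l_\a$ and on old arrows by $0$; the assignment $\a\mapsto\l_\a$ then descends to a homomorphism on $\pi_1$ because $d$ kills the minimal relations. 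Surjectivity can be obtained either by the dimension count just performed or by directly exhibiting, for each character, a diagonal derivation realising it.

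The main obstacle I anticipate is part $(a)$ and the bookkeeping in $(b)$: making rigorous the claim that the fundamental group is presentation-independent requires being careful about what ``the minimal system of relations'' means up to change of basis, and the comparison of the first Betti number of $(\tilde Q,\tilde I)$ with $N_W$ requires one to verify that each chordless-cycle relation reduces the rank of homology by exactly one \emph{and} that two chordless cycles sharing an arrow get identified in homology exactly as in $\sim_W$ — in other words that the ``homological gluing'' of cycles coincides with the combinatorial $\approx_W$. This is where one must use the precise shape of the subquiver in Theorem \ref{Thm:TeoBT}$(a)$ (the paths $\gamma_i\xi$ share only endpoints, so distinct chordless cycles overlap in a controlled way) to rule out spurious extra relations among homotopy classes. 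Once that matching is established, the rest is formal.
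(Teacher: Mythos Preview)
For part $(a)$ your argument is essentially the paper's: the presentation of $B$ is determined, up to scalars, by the quiver alone (this is exactly \cite[(4.2)]{BT13}, i.e.\ Theorem \ref{Thm:TeoBT}$(a)$), and scalars do not affect the homotopy relation, so $\pi_1(\tilde Q,\tilde I)$ is presentation-independent.

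For part $(b)$ you take a much longer route than the paper and your first proposed approach has a gap. The Assem--de la Pe\~na type result you allude to is stated for \emph{triangular} algebras, and $B$, being cluster tilted with chordless oriented cycles, is not triangular; moreover ``no multiple arrows'' does not imply constricted, since constricted asks that $\dim_\K B(x,y)=1$ for each arrow $x\to y$, a condition on \emph{all} parallel paths, which you have not verified. Your alternative Betti-number count could in principle be pushed through, but it is unnecessary. The paper's proof of $(b)$ is a single line: Lemma \ref{Lem:EndoE} shows that every (normalised) derivation of $B$ is diagonalisable, and then the isomorphism $\HH{1}{B}\cong\Hom{}{\pi_1(\tilde Q,\tilde I)}{\K^+}$ is a direct citation of \cite[Corollary 3]{PS01}. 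You do arrive at the diagonalisability observation at the end of your proposal, but you do not recognise that this fact alone, via \cite{PS01}, finishes the proof without any constrictedness hypothesis, explicit map, or dimension matching.
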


\begin{proof}
  \begin{enuma}
    \item This follows from the fact that, up to scalars, the presentation of $B$ is determined by its quiver, see \cite[(4.2)]{BT13}.
    \item In lemma \ref{Lem:EndoE} we established that any derivation is diagonalizable. The conclusion then follows from \cite[Corollary 3]{PS01}.
  \end{enuma}

\end{proof}

\subsection{The tame case}\label{subsec:TameCase}

Let $C$ be a tilted algebra of euclidean type, $E= \Ext{C}{2}{DC}{C}$ and $B = \tilde{C}$ have Keller potential $W$. It follows from Lemma \ref{subsec:CycArrowEquiv} that $N_W = N_{B,C}$ and that this common value, that we denote by $N$, is the number of indecomposable summands of $_CE_C$.  The following theorem, of which part (a) was already proven in \cite{ARS15} and \cite{AR09} gives the dimension of the first Hochschild cohomology space.

\begin{Thm}
  Let $B$ be a cluster tilted algebra, and $C$ a tilted algebra such that $B=\tilde{C}$, then
  \begin{enuma}
    \item If $B$ is of type $\tilde{\mathbb{A}}$, then $\dim{\HH{1}{B}} = N+ \e$ where

    \[\e = \begin{cases} 3 & \text{if and only if } B \text{ contains a double arrow,}             \\
        2 & \text{if and only if } B \text{ contains a hereditary proper bypass,} \\
        1 & \text{otherwise.}\end{cases}  \]

    \item If $B$ is of type $\tilde{\mathbb{D}}$ or $\tilde{\mathbb{E}}$, then $\dim{\HH{1}{B}} = N$.

  \end{enuma}
\end{Thm}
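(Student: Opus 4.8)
The plan is to split $\HH{1}{B}$ by means of the short exact sequence $0\to\Ho{1}{B}{E}\to\HH{1}{B}\to\HH{1}{C}\to 0$ of Theorem \ref{thm:TameCT}(a), to recognise its left-hand term as $\K^{N}$, and thereby to reduce the whole statement to the computation of $\dim{\HH{1}{C}}$. Concretely: since $B$ is cluster tilted of euclidean type it is tame, so Theorem \ref{thm:TameCT}(c) together with Lemma \ref{subsec:CycArrowEquiv} gives $\Ho{1}{B}{E}\cong\K^{N_{B,C}}\cong\K^{N}$; and the above sequence, being one of $\K$-vector spaces, splits, whence $\dim{\HH{1}{B}}=N+\dim{\HH{1}{C}}$ --- which is case $(c)$ of the lower bound proposition above, now in force because $B$ is tame. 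Moreover, by Theorem \ref{thm:TameCT}(e), in the cases $\tilde{\mathbb{D}}$ and $\tilde{\mathbb{E}}$ we are free to take $C$ constricted. So everything comes down to showing that $\dim{\HH{1}{C}}=0$ when $B$ is of type $\tilde{\mathbb{D}}$ or $\tilde{\mathbb{E}}$, and that $\dim{\HH{1}{C}}=\e$ when $B$ is of type $\tilde{\mathbb{A}}$.

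For part (b) I would prove that a constricted tilted algebra $C$ whose relation extension is cluster tilted of type $\tilde{\mathbb{D}}$ or $\tilde{\mathbb{E}}$ is strongly simply connected; then $\HH{1}{C}=0$ directly from the definition, just as in the cyclically oriented case this vanishing came from Theorem \ref{Thm:TeoBT}(c). (Should strong simple connectedness be hard to obtain directly, it is enough to know that $C$ is simply connected and that every derivation of $C$ is diagonalisable, as in Lemma \ref{Lem:EndoE}, which again forces $\HH{1}{C}=0$.) The route I have in mind is combinatorial: being an admissible cut, the quiver of $C$ is obtained from that of $B$ by deleting one arrow from each chordless cycle, and every relation of $C$ inherited from a chordless cycle of $B$ is a binomial, "commutativity type" relation identifying the paths it bounds, so that the fundamental group of every presentation is trivial. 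The step I expect to be the real work is doing this uniformly over the classification of cluster tilted algebras of these euclidean types: for $\tilde{\mathbb{E}}$ the mutation classes are finite, but for $\tilde{\mathbb{D}}_n$ an argument valid for all $n$ is needed --- alternatively the type $\tilde{\mathbb{D}}$ subcase can be read off the geometric model developed in Section 4.

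For part (a), $C$ is a tilted algebra of type $\tilde{\mathbb{A}}$, hence a gentle algebra whose quiver has, in its underlying graph, exactly one cycle with trees attached. I would compute $\dim{\HH{1}{C}}$ from the description of the Hochschild cohomology of gentle algebras (see \cite{CSS18,Val15}), equivalently following \cite{AR09,ARS15}: the unique cycle always accounts for a summand of dimension $1$, there is an additional summand of dimension $1$ precisely when $B$ contains a hereditary proper bypass, and of dimension $2$ precisely when $B$ contains a double arrow --- the content of \cite{AR09,ARS15} being exactly this dictionary between the local shape of $\tilde{Q}$ at the chordless cycle through which the cut passes and the extra summands of $\HH{1}{C}$. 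Substituting into $\dim{\HH{1}{B}}=N+\dim{\HH{1}{C}}$ then yields $\dim{\HH{1}{B}}=N+\e$ with $\e$ as in the statement; thus, modulo the identification $N=N_W$ of Lemma \ref{subsec:CycArrowEquiv}, part (a) is a reformulation of the results of \cite{AR09,ARS15}.
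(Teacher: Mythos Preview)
Your overall strategy matches the paper's exactly: both use the short exact sequence of Theorem \ref{thm:TameCT}(a), identify $\Ho{1}{B}{E}$ with $\K^{N}$ via Theorem \ref{thm:TameCT}(c) and Lemma \ref{subsec:CycArrowEquiv}, and reduce everything to computing $\dim{\HH{1}{C}}$. For part (a) your treatment via the gentle/constricted dichotomy is essentially what the paper does, citing \cite[(4.3)]{ARS15} for the two non-constricted shapes and reading off $\dim{\HH{1}{C}}\in\{1,2,3\}$ accordingly.

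For part (b), however, the paper's argument is much shorter than the combinatorial one you sketch: since $C$ is tilted of type $\tilde{\mathbb{D}}$ or $\tilde{\mathbb{E}}$, it is simply connected by \cite{AS88}, and then $\HH{1}{C}=0$ follows from \cite{AMP01} (see also \cite{LeMeurTop09}), which gives the implication ``simply connected $\Rightarrow$ vanishing first Hochschild cohomology'' in the tame setting. No constrictedness, no case-by-case analysis over mutation classes, and no geometric model are needed.

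Your proposed combinatorial route for (b) also has a genuine obstruction as written: you describe $C$ as obtained from $B$ by ``deleting one arrow from each chordless cycle'' with the surviving relations all binomial, but that is precisely the cyclically oriented picture of admissible cuts (Theorem \ref{Thm:TeoBT}), and not every cluster tilted algebra of type $\tilde{\mathbb{D}}$ or $\tilde{\mathbb{E}}$ is cyclically oriented---the paper's own example of type $\tilde{\mathbb{D}}_4$ in the Examples subsection exhibits a nonoriented chordless cycle $(\c,\a\b)$. In such cases $C$ is not an admissible cut in the sense you invoke, its relations need not be commutativity relations coming from chordless cycles of $B$, and the fundamental-group computation you outline would not go through as stated. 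The citations \cite{AS88} and \cite{AMP01} used in the paper bypass this difficulty entirely.
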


\begin{proof}
  \begin{enuma}
    \item Assume $B$ is of type $\tilde{\mathbb{A}}$. Because of theorem \ref{thm:TameCT}  we have $\HH{1}{B} \simeq \HH{1}{C} \oplus \K^N$. If $B$ is constricted, so is $C$ and then $\HH{1}{C} \simeq \K$. Otherwise, $B$ contains either a double arrow or a hereditary bypass, an thus is of one of the two forms of \cite[(4.3)]{ARS15} or their respective duals. In the first case, $\HH{1}{C} \simeq \K^3$ and in the second, $\HH{1}{C} \simeq \K^2$.

    \item Assume now that $B$ is of type $\tilde{\mathbb{D}}$ or $\tilde{\mathbb{E}}$. Then $C$ is a tilted algebra of type  $\tilde{\mathbb{D}}$ or $\tilde{\mathbb{E}}$. Because of \cite{AS88}, $C$ is simply connected. Because of \cite{AMP01}, see also \cite{LeMeurTop09}, we have $\HH{1}{C} = 0$. Therefore, Theorem \ref{thm:TameCT}(c) yields in this case $\HH{1}{B} \simeq \K^N$.

  \end{enuma}

\end{proof}

\subsection{Invariants are invariant} The following obvious corollary arises from the fact that $\dim{\HH{1}{B}}$ depends only on $B$.

\begin{Cor}Let $B$ be a tame cluster tilted algebra, $C_1, C_2$ be tilted algebras such that $B\simeq \tilde{C}_1 \simeq \tilde{C}_2$. For each $i=1,2$, let $E_i = \Ext{C_i}{2}{DC_i}{C_i}$ and $W_i$ the Keller potential arising from the relations of $C_i$. Then

  \begin{enuma}
    \item $N_{B, C_1} = N_{B,C_2}$.
    \item $N_{W_1} = N_{W_2}$ and does not depend on the presentation.
    \item The $C-C$-bimodules $E_1$ and $E_2$ have the same number of indecomposable summands.
    \item $\dim{\End{C_1-C_1}{E_1}} = \dim{\End{C_2-C_2}{E_2}}$.

  \end{enuma}

\end{Cor}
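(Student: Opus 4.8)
The plan is to rest everything on one structural fact: $\dim{\HH{1}{B}}$ is an invariant of the algebra $B$ alone, depending neither on a presentation of $B$ nor on the choice of a tilted algebra $C$ with $B \simeq \tilde{C}$. Combining this with the explicit formulas already proved --- Theorem~A in the cyclically oriented case (Subsection~\ref{subsec:ProofMain}), which in particular covers every representation-finite cluster tilted algebra, and the theorem of Subsection~\ref{subsec:TameCase} in the euclidean case --- together with Lemma~\ref{subsec:CycArrowEquiv}, the four equalities will drop out almost formally.

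I would first handle the case in which $B$ is representation-finite or of euclidean type $\tilde{\mathbb{D}}$ or $\tilde{\mathbb{E}}$. When $B$ is representation-finite it is cyclically oriented, and Theorem~A gives $\dim{\HH{1}{B}} = N_{W_i}$ for $i=1,2$; when $B$ is of type $\tilde{\mathbb{D}}$ or $\tilde{\mathbb{E}}$, part (b) of the theorem of Subsection~\ref{subsec:TameCase} gives the same equality. Since the left-hand side is independent of $i$, we get $N_{W_1}=N_{W_2}$, and this common value is determined by $B$; so (b) holds. By Lemma~\ref{subsec:CycArrowEquiv} we have $N_{W_i}=N_{B,C_i}$, which gives (a), and the last assertion of that lemma identifies $N_{W_i}$ with the number of indecomposable summands of $E_i$ as a $C_i$-$C_i$-bimodule, which gives (c). For (d), parts (b) and (c) of Theorem~\ref{thm:TameCT} give $\dim{\Ho{1}{C_i}{E_i}} + \dim{\End{C_i-C_i}{E_i}} = \dim{\Ho{1}{B}{E_i}} = N_{B,C_i}$; since each of the $N_{B,C_i}$ indecomposable summands of $E_i$ contributes its identity endomorphism, $\dim{\End{C_i-C_i}{E_i}} \geq N_{B,C_i}$, and hence $\dim{\End{C_i-C_i}{E_i}} = N_{B,C_i}$ (incidentally forcing $\Ho{1}{C_i}{E_i}=0$); now (d) follows from (a).

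For $B$ of euclidean type $\tilde{\mathbb{A}}$ the argument runs identically except that part (a) of the theorem of Subsection~\ref{subsec:TameCase} gives $\dim{\HH{1}{B}} = N_{B,C_i} + \e$, not $N_{B,C_i}$. Here the correction $\e\in\{1,2,3\}$ is prescribed entirely by the quiver of $B$ --- by whether it contains a double arrow, a hereditary proper bypass, or neither --- and the quiver of $B$ is intrinsic to $B$, so $\e$ takes the same value for $C_1$ and for $C_2$. Subtracting it from the presentation-independent number $\dim{\HH{1}{B}}$ yields $N_{B,C_1}=N_{B,C_2}$, i.e. (a); alternatively, (a) is immediate here from Theorem~\ref{thm:TameCT}(d). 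Parts (b), (c), (d) then follow exactly as in the previous paragraph.

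I do not foresee a real obstacle: the corollary is essentially a bookkeeping consequence of results already in hand. The single point that needs care is the $\tilde{\mathbb{A}}$ case, where $N_{B,C}$ cannot be read off $\dim{\HH{1}{B}}$ directly but only after discarding the term $\e$ --- and this step is legitimate precisely because the trichotomy ``double arrow / hereditary proper bypass / neither'' is a property of the quiver of $B$, hence of $B$ itself.
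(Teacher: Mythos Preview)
Your proposal is correct and follows exactly the approach the paper intends: the paper gives no formal proof at all, merely stating that ``the following obvious corollary arises from the fact that $\dim{\HH{1}{B}}$ depends only on $B$,'' and you have spelled out precisely how each item (a)--(d) is extracted from that invariance together with the formulas of Theorem~A, the theorem of Subsection~\ref{subsec:TameCase}, and Lemma~\ref{subsec:CycArrowEquiv}. Your handling of the $\tilde{\mathbb{A}}$ case via the quiver-determined correction $\e$ (or alternatively via Theorem~\ref{thm:TameCT}(d) directly) is exactly the care the paper tacitly assumes, and your derivation of (d) through the inequality $\dim{\End{C_i-C_i}{E_i}}\geq N_{B,C_i}$ is the argument of the lower-bound proposition in Subsection~2.6, which the paper has already established.
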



\subsection{Examples}
\begin{enuma}

  \item Consider the tilted algebra $C$ given by the quiver
  \[ \begin{tikzcd}[row sep = small]
      1   &            &8\arrow[dl, "\omega"]  &  &  \\
      &7\arrow[ul, "\j"]\arrow[d, "\kappa"]  &      &  &\\
      &4\arrow[dl, "\b"']        &      &  &\\
      2  &            &6\arrow[ul, "\a"']\arrow[dl,"\c"' ]  &  &\\
      &5\arrow[ul, "\d"']\arrow[dl,"\mu"]    &      &  10\arrow[ll, "\rho"']&\\
      3  &            &9\arrow[ul, "\l"]  &  &\\
    \end{tikzcd}\]

  bound by $\omega \j = 0, \kappa \b =0$, $\a \b + \c \d = 0$, $\rho \d=0,  \rho \mu =0$. It is representation-infinite of (euclidean) type $\tilde{\mathbb{E}}_8$ and its relation extension $\tilde{C}$ is given by the quiver

  \[ \begin{tikzcd}[row sep = small]
      1\arrow[rr, "\phi"]  &            &8\arrow[dl, "\omega"]  &  &  \\
      &7\arrow[ul, "\j"]\arrow[d, "\kappa"]  &      &  &\\
      &4\arrow[dl, "\b", near start]        &      &  &\\
      2\arrow[uur,"\xi"]\arrow[rr, "\e"] \arrow[rrrd, "\eta", near end]  &            &6\arrow[ul, "\a"']\arrow[dl,"\c"', near start]  &  &\\
      &5\arrow[ul, "\d"]\arrow[dl,"\mu"']    &      &  10\arrow[ll, "\rho"]&\\
      3\arrow[rrru, "\s"', near end, bend right=45]  &            &9\arrow[ul, "\l", near start]  &  &\\
    \end{tikzcd}\]

  with Keller potential
  \[ W = \omega \j \f + \kappa \b \xi +\a \b \e + \c \d \e + \rho \d \eta + \rho \mu \s.\]
  It is easily seen to be cyclically oriented.

  %
  %
  %

  Further, there are 2 equivalence classes of chordless cycles, namely $\mathcal{S}_1 = \{\omega \j \f\}$ and $\mathcal{S}_2 = \{ \k \b \xi, \a \b \e, \c \d \e, \rho \d \eta, \rho \mu \s \}$. These may be represented in the diagram below. The bottom line represents the intersection of the corresponding cycles

  \[\begin{tikzcd}[column sep = small]
      \omega \j \f  & \k \b \xi\arrow[dr, no head] & &\a \b \e && \c \d \e  && \rho \d \eta&& \rho \mu \s\\
      &    & \b\arrow[ur, no head] &      &\e \arrow[ur, no head]\arrow[ul, no head]&    &\d\arrow[ur, no head]\arrow[ul, no head]&    & \rho\arrow[ur, no head]\arrow[ul, no head]&&
    \end{tikzcd}
  \]
  Applying our theorem, we get $\dim{\HH{1}{\tilde{C}}} = 2$.

  The connected components of the diagram are precisely the equivalence classes. Accordingly, the bimodule $_CE_C$ has two indecomposable summands $E=E_1 \oplus E_2$, with $E_i$ corresponding to $\mathcal{S}_i$. Then $E_1 = C\f C$ is a simple module while $E_2$ is $13-$dimensional with top corresponding to the new arrows $\e, \eta$ and $\s$.

  \item  Consider the tilted algebra $C$ given by the quiver
  \[\begin{tikzcd}[column sep = small]
      5 \arrow[dr, "\delta"]  &   &4 \arrow[dr, "\b"]  & \\
      1 \arrow[r, "\e"']    &2\arrow[ur,"\a"] \arrow[rr, "\c"']& & 3
    \end{tikzcd}
  \]
  bound by $\e \c = 0$, $\d \a \b + \d \c = 0$. It is representation-infinite of euclidean type $\tilde{\mathbb{D}}_4$ and its relation extension $\tilde{C}$ is given by the quiver
  \[\begin{tikzcd}
      5 \arrow[dr, "\delta"]  &   &4 \arrow[dr, "\b"']  & \\
      1 \arrow[r, "\e"']    &2\arrow[ur,"\a"] \arrow[rr, "\c"']& & 3\arrow[lllu,"\l"', bend right = 60] \arrow[lll, "\mu",bend left = 30]
    \end{tikzcd}
  \]
  with Keller potential $W = \l \d \a \b + \l \d \c + \mu \e \c$. It is not cyclically oriented, because it contains the nonoriented chordless cycle given by the parallel paths $(\c, \a \b)$.

  In this case, the same diagram as in example (a) show that we have just one equivalence class
  \[\begin{tikzcd}
      \l \d \a \b \arrow[dr, no head] &     & \l \d \c \arrow[dr, no head] \arrow[dl, no head] & & \mu \e \c\arrow[dl, no head] \\
      & \l\d   &                  & \c &
    \end{tikzcd}
  \]

  so that $\dim{\HH{1}{\tilde{C}}} = 1$.

  In contrast to the cyclically oriented case, the two cycles $\l\d \a \b$ and $\l\d \c$ have more than one arrow in common.

\end{enuma}


\section{Proof of Theorem B}
In this section, we give a geometric version of the main Theorem for cluster tilted algebras of type $\mathbb D_n$ and $\widetilde{\mathbb D}_n$, therefore we shall work with the geometric model of those algebras, namely the triangulation of a $n$-polygon with one puncture and two punctures, respectively. Recall that a triangulation $\mathbb T$ is any maximal collection of non-crossing \emph{arcs}, which are isotopic classes of curves with endpoints in the vertices of the $n$-polygon or the punctures and which are not isotopic to a point or a boundary segment. We refer to the pair $(S, \mathbb T)$, as a \emph{triangulated punctured $n$-polygon}. Recall that the valency $\operatorname{val}_\tau(x)$ of a puncture $x$ is the number of arcs in $\mathbb T$ incident to $x$, where each loop at $x$ is counted twice.

\subsection{Unreduced potential for $\mathbb D_n$ and $\widetilde{\mathbb D}_n$}
In this case, any triangulation of a punctured $n$-polygon $S$ cuts the surface into \emph{ideal triangles}, but also into a decomposition of puzzle pieces \cite{FST08}, those puzzle pieces are fundamental to construct the quiver with potential of the triangulated surface $(S, \mathbb T)$. In this decomposition, there are seven different triangles which are not self-folded (boundary segments are coloured in grey), the first four types appears on type $\mathbb D$ and the last five type could appear on type $\widetilde{\mathbb D}$.

\begin{figure}[ht!]
\includegraphics[scale=0.2]{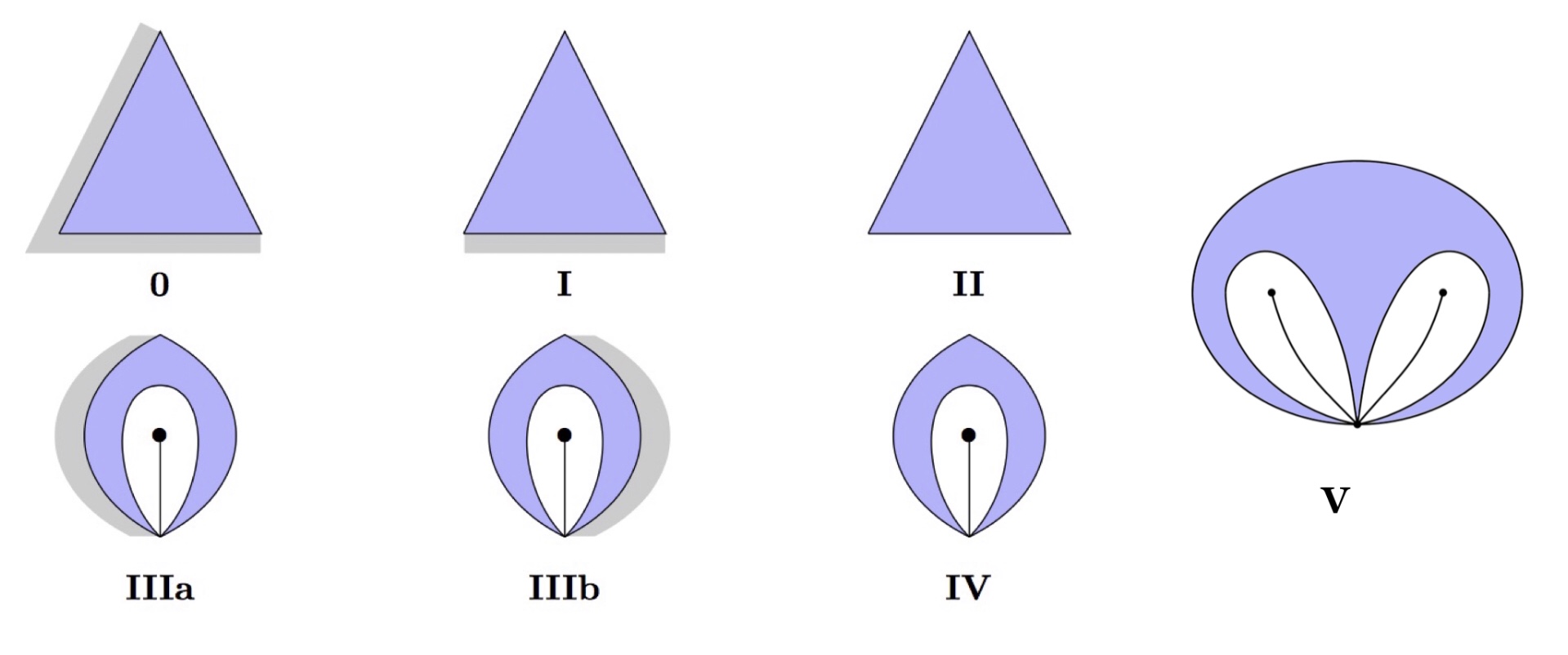}
\caption{Non self-folded triangles}
\label{triangles}
\end{figure}

Therefore the unreduced adjacency quiver $\widehat {Q}(\mathbb T)$ (as defined in \cite{FST08}) is built by gluing blocks corresponding to each kind of triangle, these identifications are allowed only in vertices of type $\bullet$.

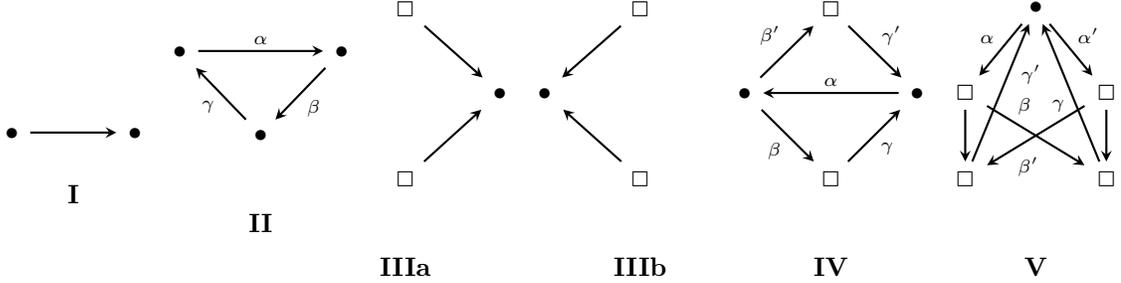
\begin{figure}[ht!]
\begin{tikzcd}[arrow style=tikz,>=stealth,row sep=1em, column sep=1em]
& & \\
\bullet \arrow[rr, thick] & & \bullet \\
& \textbf{I} &\\
\end{tikzcd}
\begin{tikzcd}[arrow style=tikz,>=stealth,row sep=2em, column sep=1.5em]
 & & \\
  \bullet \arrow[rr, thick, "\alpha"] &  & \bullet \arrow[dl, thick, "\beta"]\\
   & \bullet \arrow[ul, thick, "\gamma"] & \\
   &\textbf{II} &\\
   &&\\
\end{tikzcd}
\begin{tikzcd}[arrow style=tikz,>=stealth,row sep=2em, column sep=1.5em]
 \Box \arrow[dr, thick]  &\\
                     & \bullet \\
  \Box \arrow[ur, thick] & \\
  \textbf{IIIa}&\\
\end{tikzcd}
\begin{tikzcd}[arrow style=tikz,>=stealth,row sep=2em, column sep=1.5em]
         &\Box \arrow[dl, thick]& \\
 \bullet & \\
         & \Box \arrow[ul, thick] \\
         &\textbf{IIIb}\\
\end{tikzcd}
\begin{tikzcd}[arrow style=tikz,>=stealth,row sep=2em, column sep=1.5em]
         &\Box \arrow[dr, thick, "\gamma'"]& \\
\bullet \arrow[ru, thick,"\beta'"] \arrow[rd, thick,"\beta"']  & & \bullet \arrow[ll, thick, "\alpha"']\\
         & \Box \arrow[ur, thick, "\gamma"']& \\
         &\textbf{IV}&\\
\end{tikzcd}
\begin{tikzcd}[arrow style=tikz,>=stealth,row sep=2em, column sep=1em]
         &\bullet \arrow[dr, thick, "\alpha'"] \arrow[ dl, thick, "\alpha"']& \\
\Box \arrow[d, thick] \arrow[rrd, thick, "\beta" near start]  & & \Box \arrow[d, thick] \arrow[dll, thick, "\beta'" near end]\\
          \Box \arrow[uur, thick, "\gamma'"' near end]& &\Box \arrow[luu, thick, "\gamma"] \\
          &\textbf{V}&\\
\end{tikzcd}
\caption{Quivers blocks.}
\label{blocks}
\end{figure}

Following \cite{Labardini-09}, the unreduced potential $\widehat{W}(\mathbb T)$  associated to $\mathbb T$ depends on the blocks of type II, IV, V, and the cycles surrounding punctures. To fix notation, for each non self-folded triangle $\triangle$ of type II, IV or V,  we denote by $C_{\triangle}$ the 3-cycle $\alpha\beta\gamma$ in the block of type II, IV or V respectively, we denote by $C_x$ the cycle surrounding the puncture $x$ and by $C_{pq}$ the 3-cycle $\alpha' \beta' \gamma'$ in the block of type V, which is surrounding both punctures $p$ and $q$. Since there is a bijection between blocks and non self-folded triangles, we write the unreduced potential in terms of non self-folded triangles, as follows:
$$\widehat{W}(\mathbb T)=\sum\limits_{\triangle \textrm{ of type II, IV or V}} C_{\triangle}+ C_p + C_q +C_{pq}.$$


In order to obtain the reduced quiver with potential $(Q_{\mathbb T}, W_{\mathbb T})$ of the triangulated surface $(S,M,\mathbb T)$, one needs an algebraic procedure to delete 2-cycles from the (non-necessarily 2-acyclic) quiver with potential $(\widehat Q_{\mathbb T}, \widehat W_{\mathbb T})$. Such algebraic procedure is provided by Derksen-Weyman-Zelevinsky’s Splitting Theorem, see \cite{DWZ-08}. Recall that the unreduced quiver with potential $(\widehat Q_{\mathbb T},\widehat{W}(\mathbb T)$ is already reduced if and only if $\operatorname{val}_\mathbb T(x)\neq 2$ for every puncture $x$, otherwise the reduction affects the 2-cycle $C_x$ surrounding $x$ and the 3-cycles that share an arrow with the 2-cycle. Observe that a puncture $x$ of valency 2 could be involved in three different type of configurations depicted on Figure \ref{Val2}.

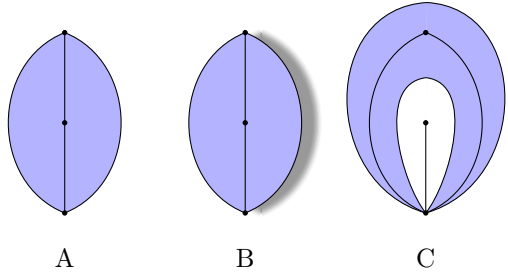
\begin{figure}[ht!]
\centering
\begin{tikzpicture}[scale=0.4]
\draw[draw=black, fill=blue!30!white] (-6,-3).. controls (-8.5, -2) and (-8.5, 2)..(-6,3);
\draw[draw=black, fill=blue!30!white] (-6,-3).. controls (-3.5,-2) and (-3.5, 2)..(-6,3);
\draw[draw=black] (-6,-3) -- (-6,3);
\filldraw[black] (-6,0) circle (2pt)
(-6,3) circle (2pt)
(-6,-3) circle (2pt);
\node at (-6,-4.5) {A};
\draw[draw=black, blur shadow={shadow xshift=6pt,shadow yshift=0pt}, fill=blue!30!white] (0,-3).. controls (-2.5, -2) and (-2.5, 2)..(0,3);
\draw[draw=black, blur shadow={shadow xshift=6pt,shadow yshift=0pt}, fill=blue!30!white] (0,-3).. controls (2.5,-2) and (2.5, 2)..(0,3);
\draw[draw=black] (0,-3) -- (0,3);
\filldraw[black] (0,0) circle (2pt)
(0,3) circle (2pt)
(0,-3) circle (2pt);
\node at (0,-4.5) {B};
\draw[fill=blue!30!white] (6,-3).. controls (2.5,-2) and (2.5, 3.8).. (6.04,4);
\draw[draw=black, fill=blue!30!white] (6,-3).. controls (9.5,-2) and (9.5,3.8).. (6.04,4);
\draw[draw=black, fill=blue!30!white] (6,-3).. controls (3.5, -2) and (3.5, 2)..(6.01,3);
\draw[draw=black, fill=blue!30!white] (6,-3).. controls (8.5,-2) and (8.5, 2)..(6.01,3);
\draw[draw=black, fill=white] (6,-3).. controls (5.2,-2) and (4.3, 1.3).. (6.01, 1.5);
\draw[draw=black, fill=white] (6,-3).. controls (6.8,-2) and (7.7, 1.3).. (6.01, 1.5);
\draw[draw=black] (6,-3) -- (6,0);
\filldraw[black] (6,0) circle (2pt)
(6,3) circle (2pt)
(6,-3) circle (2pt);
\node at (6,-4.5) {C};
\end{tikzpicture}
\caption{Configurations of punctures with valency 2.}\label{Val2}
\end{figure}

We give a geometric interpretation of relation $\sim_W$ in cycles in the reduced potential in terms of internal triangles related to a puncture. Notice that not every cycle of a potential arises from triangles and not every internal triangle gives a cycle in the reduced potential. In this section, we work with two quivers with potential coming from the same surface, the reduced one and the unreduced one.

\begin{De}
Let $(S,M, \mathbb T)$ be a punctured triangulated surface. We said an internal triangles $\triangle$ is \textit{related to a puncture} $x$ if one of the following conditions holds:

\begin{itemize}
\item[a)] a side of  $\triangle$  is incident to the puncture $x$;
\item[b)] the triangle $\triangle$ is part of a block of type IV, and $x$ is the puncture of the digon;
\item[c)] the triangles $\triangle$ is part of a block of type V, and $x$ is one of the puncture of the monogon.
\end{itemize}

Denote by $\overline{\triangle_{\textrm{Rel(x)}}}$ the subset of internal triangles related to the puncture $x$ and by $\triangle_{\textrm{Rel(x)}}$ the subset of non self-folded internal triangles related to the puncture $x$. We set $\triangle\approx\triangle'$ in case there exists a puncture $x$ such that $\triangle$ and $\triangle'$ are related to it. Then $\sim$ is defined to be the least equivalence relation containing $\approx$. Thus, $\sim$ is the transitive closure of $\approx$.
\end{De}

\begin{Rmk}\label{sum}
Denote by $\triangle_\textrm{NRel(x,y)}$ the subset of non self-folded triangles not related to $x$ or $y$.

\begin{itemize}
\item By construction of $\widehat{Q}(\tau)$, the 3-cycles $C_\triangle$ and $C_{\triangle'}$ do not share arrows, for any pair of non self-folded triangles $\triangle$ and $\triangle'$.
\item The subset of non self-folded internal triangles is a disjoint union of $\triangle_{\textrm{Rel(x,y)}}\sqcup \triangle_\textrm{NRel(x,y)}$.
\item If $\operatorname{val}_\tau(x)=1$, then $\mid \overline{\triangle_{\textrm{Rel(x)}}} \mid \leq 2$ and the equality holds if and only if the triangulation has a triangle of type VI or V.
\item By definition, if $\triangle$ is a triangle of type II, IV or V and $\triangle\in\textrm{Rel(x)}$, then the $C_{\triangle}$ is cycle equivalent to $C_x$ in the unreduced potential.
\end{itemize}
\end{Rmk}

\begin{Lem}\label{triangles-related}
Let $\triangle$ and $\triangle'$ be non self-folded internal triangles and  $C_\triangle$ and $C_{\triangle}'$ the 3-cycle associated to each triangle. Then $\triangle$ and $\triangle'$ are related if and only if $C_\triangle$ and $C_{\triangle'}$ are cycle-equivalent in the unreduced potential. 
\end{Lem}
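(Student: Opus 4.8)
The plan is to prove both implications by relating the notion of ``related internal triangles'' directly to the combinatorics of the unreduced quiver blocks and the definition of cycle-equivalence. Recall that two cycles are cycle-equivalent in the unreduced potential $\widehat W(\TT)$ precisely when they lie in the same $\sim_{\widehat W}$-class, i.e.\ they are connected by a chain of cycles each pair of which shares an arrow (up to cyclic rotation). By the first bullet of Remark \ref{sum}, for distinct non self-folded triangles $\triangle,\triangle'$ the $3$-cycles $C_\triangle$ and $C_{\triangle'}$ never share an arrow directly; so any chain witnessing $C_\triangle \sim_{\widehat W} C_{\triangle'}$ must pass through one of the remaining summands of $\widehat W(\TT)$, namely the puncture cycles $C_p$, $C_q$, or the $3$-cycle $C_{pq}$. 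This observation is the hinge of the argument: the only way two triangle-cycles can be cycle-equivalent is by both being linked, through shared arrows, to a puncture cycle.

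First I would prove the forward direction: if $\triangle \approx \triangle'$, then $C_\triangle \sim_{\widehat W} C_{\triangle'}$. By transitivity of $\sim_{\widehat W}$ it suffices to treat the case where $\triangle$ and $\triangle'$ are both related to a single puncture $x$. Here I would go through the three cases (a), (b), (c) of the definition of ``related to $x$'': a direct inspection of the blocks of type II, IV, and V (Figure \ref{blocks}) and of the cycle $C_x$ surrounding $x$ in $\widehat Q(\TT)$ shows that in each case the $3$-cycle $C_\triangle$ shares at least one arrow with $C_x$. This is essentially the content of the last bullet of Remark \ref{sum}. Hence $C_\triangle \approx_{\widehat W} C_x \approx_{\widehat W} C_{\triangle'}$, giving $C_\triangle \sim_{\widehat W} C_{\triangle'}$, and the general case follows by chaining through the punctures appearing along the $\approx$-chain from $\triangle$ to $\triangle'$.

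For the converse I would argue contrapositively on the structure of a chain of cycles realizing $C_\triangle \sim_{\widehat W} C_{\triangle'}$. Such a chain is a sequence $C_\triangle = D_0, D_1, \dots, D_m = C_{\triangle'}$ of summands of $\widehat W(\TT)$ with consecutive terms sharing an arrow. As noted above, the $D_i$ with $0<i<m$ that are themselves triangle-cycles cannot be adjacent to another triangle-cycle, so the chain alternates in a controlled way between triangle-cycles and puncture cycles. I would then use the block structure to show: whenever a triangle-cycle $C_\triangle$ shares an arrow with a puncture cycle $C_x$, the triangle $\triangle$ is related to $x$ (again a case check over blocks II, IV, V and the two ways $C_x$ can be formed, using the valency-2 configurations of Figure \ref{Val2} and Remark \ref{sum}); and whenever $C_{pq}$ shares an arrow with $C_x$, the punctures occurring are exactly $p,q$. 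Concatenating these local statements along the chain yields a chain of $\approx$-relations from $\triangle$ to $\triangle'$, hence $\triangle \sim \triangle'$.

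The main obstacle I anticipate is the bookkeeping in the converse direction: one must be careful that the reduction from $\widehat W(\TT)$ to $W_\TT$ is \emph{not} being used here — the lemma is stated for the \emph{unreduced} potential precisely to avoid that — but one still has to handle the several ways a puncture cycle $C_x$ can arise (valency $1$ with a self-folded triangle attached, valency $2$ in the configurations A, B, C of Figure \ref{Val2}, higher valency) and check that in every such local picture a triangle-cycle sharing an arrow with $C_x$ forces the corresponding triangle to satisfy one of conditions (a)--(c). The self-folded triangles are a potential trap, since they contribute arrows to $\widehat Q(\TT)$ without contributing a $3$-cycle to $\widehat W(\TT)$; but because $\triangle,\triangle'$ are assumed non self-folded and we only ever need that \emph{non} self-folded triangle-cycles attach to $C_x$ in the asserted way, this should be manageable by a finite inspection of the seven block types.
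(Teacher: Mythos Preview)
Your proposal is correct and follows essentially the same approach as the paper: both directions hinge on the observation (Remark~\ref{sum}) that distinct triangle-cycles never share an arrow, so any $\sim_{\widehat W}$-chain must pass through the puncture cycles $C_p$, $C_q$ (or $C_{pq}$), and the local check that $C_\triangle$ shares an arrow with $C_x$ precisely when $\triangle$ is related to $x$. The only organisational difference is that the paper disposes of the block~V case separately at the outset (where the whole equivalence class is the four $3$-cycles of that block) and then runs a minimal-chain argument assuming no type~V triangles, whereas you fold that case into the general block inspection; both routes work.
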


\begin{proof}

First of all, observe that if a triangle $\triangle$ of type V is part of the decomposition of puzzle pieces induced by $\mathbb T$, then the equivalence class $[C_\triangle]$ is the set of the four 3-cycles of the block V, therefore there is no other triangle $\triangle '$ such that $\triangle \sim \triangle '$. Assume that the decomposition of puzzle pieces induced by $\mathbb T$ has no triangles of type V.

By definition $\triangle \approx \triangle '$ if and only if $C_{\triangle}\approx_{\widehat W(\mathbb T)} C_x \approx_{\widehat W_(\mathbb T)} C_{\triangle '}$, then it is clear that if $\triangle'\sim \triangle$, then $C_\triangle\sim_{\widehat W(\mathbb T)} C_{\triangle'}$.





Now, suppose $C_\triangle$ and $C_{\triangle'}$ are cycle-equivalent. Let $\C_\triangle=C_0\approx_{\widehat W(\mathbb T)} C_1 \approx_{\widehat W(\mathbb T)} \dots \C_{k-1} \approx_{\widehat W(\mathbb T)} C_k=C_{\triangle '}$ be a sequence of cycles related by a common arrow such that $C_i\neq C_j$ if and only if $i\neq j$. Since no pair of 3-cycles associated to triangles of type II, IV or V shares arrows, the cycles $C_1$ and $C_{k-1}$ are $C_p$ or $C_q$, therefore $\triangle$ and $\triangle '$ are related to a puncture. If $C_1=C_{k-1}$, then both triangles $\triangle$ and $\triangle ''$ are related to the same puncture, and by definition $\triangle\sim_{\widehat W(\mathbb T)}\triangle'$.

Now suppose $C_1\neq C_{k-1}$, without loss of generality suppose $C_1=C_p$ and $C_{k-1}=C_q$. We claim there is a non self-folded internal triangle $\triangle''$ related to $p$ and $q$. By the minimality conditions over the sequences of cycles and that no pair of 3-cycles associated to non self-folded internal triangles shares arrow, then the sequences are either $\C_\triangle=C_0\approx_{\widehat W(\mathbb T)} C_p \approx_{\widehat W(\mathbb T)}  \C_{q} \approx_{\widehat W(\mathbb T)} C_k=C_{\triangle '}$ or $\C_\triangle=C_0\approx_{\widehat W(\mathbb T)} C_p \approx_{\widehat W(\mathbb T)} C_{\triangle ''} \approx_{\widehat W(\mathbb T)} \C_{q} \approx_{\widehat W(\mathbb T)} C_k=C_{\triangle '}$ where $\triangle ''$ is a non self-folded internal triangle.

Suppose $\C_\triangle=C_0\approx_{\widehat W(\mathbb T)} C_p \approx_{\widehat W(\mathbb T)}  \C_{q} \approx_{\widehat W(\mathbb T)} C_k=C_{\triangle '}$, then $C_p$ and $C_q$ share an arrow, and as a consequence there is a non self-folded internal triangle $\triangle ''$ with two sides incident to $p$ and $q$, as we claim. Finally if $\C_\triangle=C_0\approx_{\widehat W(\mathbb T)} C_p \approx_{\widehat W(\mathbb T)} C_{\triangle ''} \approx_{\widehat W(\mathbb T)} \C_{q} \approx_{\widehat W(\mathbb T)} C_k=C_{\triangle '}$, and by definition $\triangle''$ is related to $p$ and $q$.


\end{proof}

\subsection{Reduced potential and proof of Theorem B}
Until now, we have been working with the equivalence classes of cycles of the unreduced potential $\widehat{W}(\mathbb T)$, it is easy to see that the number of equivalence classes of cycles in the unreduced potential is less than or equal to the number of equivalence classes of cycles in the reduced potential. This subsection is devoted to study unreduced potentials and state and proof Theorem B.

\begin{De}
Let $(S,\mathbb T)$ be a triangulated punctured $n$-polygon and $x$ a puncture in $(S,\mathbb T)$.

We define the \emph{coefficient $m_x$ of $x$} as follows $$m_{x}=
\begin{cases}
1 & \textrm{if  $\operatorname{val}_\tau(x)\geq 3$ or $\mid\overline{ \triangle_{\textrm{Rel(x)}}}\mid \geq 2$} \\
0 &  \textrm{otherwise}
\end{cases}$$
\end{De}

 The following Lemma shows that the coefficient of a puncture is related to the number of equivalence classes of cycle in the reduced potential. In the following lemma, we use right equivalence of cycle in the sense of \cite{DWZ-08}.

\begin{Lem}\label{coefficient}
Let $(S,\mathbb T)$ be a triangulated punctured $n$-polygon, $x$ a puncture in $(S,\mathbb T)$ and $$\varphi:\mathcal{P}(\widehat Q_{\mathbb T}, \widehat W_{\mathbb T})\to \mathcal P(Q_{\mathbb T}, W_{\mathbb T}) \oplus \mathcal P (Q_{triv}, W_{triv})$$ the isomorphism of $k$-algebras induced by the Splitting Theorem. Then $m_x=1$ if and only if $\varphi(C_x)$ is right-equivalent to a cycle in $W_{\mathbb T}$.
\end{Lem}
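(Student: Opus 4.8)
The plan is to read both sides of the equivalence off the local picture of $\mathbb{T}$ around $x$. Since $\varphi$ is the composite of the elementary $2$-cycle reductions of \cite{DWZ-08} performed at the $2$-cycles $C_z$ attached to the valency-$2$ punctures $z$ of $\mathbb{T}$, and each such reduction modifies only the two arrows of the corresponding $C_z$, the whole statement is local near $x$. First I would dispose of the case $\operatorname{val}_\tau(x)\ge 3$: then $C_x$ is an oriented cycle of length $\operatorname{val}_\tau(x)\ge 3$, in particular not a $2$-cycle, and no arrow of $C_x$ can lie on any $C_z$ with $\operatorname{val}_\tau(z)=2$, since an arrow of $C_z$ runs between the two arcs incident to $z$, so such a coincidence would make those two arcs incident to $x$ as well, joining $x$ to $z$ by two parallel arcs---impossible. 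Hence $\varphi$ fixes $C_x$, so $\varphi(C_x)=C_x$ is a term of $W_{\mathbb{T}}$; as $m_x=1$ here, the equivalence holds.

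The heart of the argument is the case $\operatorname{val}_\tau(x)=2$. Let $a_1,a_2$ be the two arcs at $x$; they are flanked by exactly two triangles $\triangle_1,\triangle_2$ meeting $x$, and $|\overline{\triangle_{\textrm{Rel(x)}}}|$ is the number of those that are internal---equivalently, by inspection of the blocks of Figure \ref{blocks}, the number of them of type II, IV or V, and these are exactly the ones contributing a $3$-cycle of $\widehat W_{\mathbb T}$ that shares an arrow with the $2$-cycle $C_x=\alpha\beta$. I would then run the reduction in each of the three configurations of Figure \ref{Val2}. If both flanking triangles are internal (configuration B, so $|\overline{\triangle_{\textrm{Rel(x)}}}|=2$ and $m_x=1$), the relevant part of $\widehat W_{\mathbb T}$ has the form $\varepsilon_0\,\alpha\beta+\varepsilon_1\,\alpha u+\varepsilon_2\,\beta v$ with $u,v$ the complementary paths of the two $3$-cycles and $\varepsilon_i=\pm 1$; eliminating $\alpha$ and $\beta$ via $\partial_\alpha\widehat W_{\mathbb T}$ and $\partial_\beta\widehat W_{\mathbb T}$ replaces $\alpha,\beta$ by scalar multiples of $v,u$, so all three terms are sent to scalar multiples of the single longer cycle $vu$, with total coefficient $-\varepsilon_0\varepsilon_1\varepsilon_2\ne 0$; hence $vu$ is a term of $W_{\mathbb{T}}$ and $\varphi(C_x)$, being a nonzero multiple of it, is right-equivalent to a cycle of $W_{\mathbb{T}}$. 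If at most one flanking triangle is internal (configurations A and C, so $|\overline{\triangle_{\textrm{Rel(x)}}}|\le 1$ and $m_x=0$), then one of $\partial_\alpha\widehat W_{\mathbb T}$, $\partial_\beta\widehat W_{\mathbb T}$ is $\alpha$ or $\beta$ alone, the substitution kills $C_x$, and $\varphi(C_x)=0$ is not right-equivalent to any cycle of $W_{\mathbb{T}}$. This settles the valency-$2$ case.

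It remains to treat $\operatorname{val}_\tau(x)=1$, where $x$ lies inside a self-folded triangle with inner arc $r$ and enclosing loop. Using the known shape of the block (of type IV, V, or the type VI of Remark \ref{sum}) adjacent to the self-folded triangle, I would write out the part of $\widehat W_{\mathbb T}$ near $x$ and carry out the same reduction, checking that $\varphi(C_x)$ survives as a cycle of $W_{\mathbb{T}}$ exactly when $|\overline{\triangle_{\textrm{Rel(x)}}}|=2$, which by Remark \ref{sum} is exactly when $m_x=1$.

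The main obstacle I anticipate is bookkeeping rather than anything conceptual. Three points need care: (i) matching the geometric quantity $|\overline{\triangle_{\textrm{Rel(x)}}}|$ with the number of $3$-cycles of $\widehat W_{\mathbb T}$ glued to $C_x$, which rests on knowing exactly which triangle types produce an arrow between $a_1$ and $a_2$; (ii) keeping the orientations straight so that $\alpha,\beta$ genuinely form the $2$-cycle $C_x$ and the substituted potential has the asserted form; and (iii) confirming that the coefficient of $vu$ in $W_{\mathbb{T}}$ obtained after collecting the images of $C_x$ and of the flanking $3$-cycles under $\varphi$ does not vanish---which requires ruling out accidental cancellation with other terms of $\widehat W_{\mathbb T}$, using the $\pm 1$-normalisation and combinatorial structure of Labardini's potential \cite{Labardini-09}. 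The one genuinely separate situation is that of two valency-$2$ punctures whose incident arcs coincide, where the reduction at the other puncture does touch $C_x$ and Figure \ref{Val2} no longer applies directly; this is handled by running the reduction once more by hand.
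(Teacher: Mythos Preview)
Your strategy is the same as the paper's: do a local case analysis on the configuration of $\mathbb T$ around $x$, split according to the valency, and determine in each case whether $C_x$ survives the Splitting-Theorem reduction. The paper's own proof is considerably terser than yours---it simply identifies which configurations force $m_x=0$ and asserts that precisely in those $\varphi(C_x)$ lands in the trivial part---whereas you actually write down the relevant piece of $\widehat W_{\mathbb T}$ and carry out the DWZ elimination explicitly (substituting via $\partial_\alpha$, $\partial_\beta$ and tracking the resulting longer cycle). That extra detail is useful and makes the mechanism clearer; the paper does not do this.

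One factual slip to fix: you have the labels of the valency-$2$ configurations from Figure~\ref{Val2} reversed. In the paper, configuration~B is the case where one of the two flanking triangles is \emph{not} internal (so $|\overline{\triangle_{\textrm{Rel(x)}}}|\le 1$, $m_x=0$, and $\varphi(C_x)$ is right-equivalent to a cycle in $W_{triv}$); configurations~A and~C are the ones in which the reduction genuinely alters $C_x$ and produces a surviving cycle (this is stated in the paper's proof and reused in the proof of Theorem~B). Your mathematics is unaffected---just swap the labels B~$\leftrightarrow$~\{A,~C\}. Also, your justification in the $\operatorname{val}_\tau(x)\ge 3$ case that $C_x$ shares no arrow with any $C_z$ is correct but for a simpler reason than the one you give: the arrows of $C_x$ are indexed by angles \emph{at $x$} of the incident triangles, and an angle sits at exactly one vertex, so $C_x$ and $C_z$ are arrow-disjoint for $x\neq z$ without needing to rule out parallel arcs.
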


\begin{proof}
Recall that $C_x$ is by definition the cycle surrounding the puncture $x$, therefore $C_x=0$ in $\widehat W(\mathbb T)$ if and only if $\operatorname{val}_{\mathbb T}(x)=1$ and $x$ in the puncture of a digon of type IIIa or IIIb, and in this case $m_x=0$. Suppose $C_x\neq 0$.

Suppose $m_x=0$, by definition $\operatorname{val}_{\mathbb T}(x)\leq 2$ and $\mid\overline{ \triangle_{\textrm{Rel(x)}}}\mid \leq 1$, then either $x$ is in a configuration IIIa or IIIb, and in this case $C_x=0$ or $x$ is in a configuration of type B, see Figure \ref{Val2}, and one of the triangles is not an internal triangle, and as consequence $\varphi(C_x)$ is right-equivalent to a cycle in $W_{triv}$.

Now suppose $C_x\neq 0$ and $\varphi(C_x)$ is right-equivalent to a cycle in $W_{triv}$, then $\operatorname{val}_{\mathbb T}(x)=2$, in this case $x$ is involved in a configuration of type $B$, and  $\mid\overline{ \triangle_{\textrm{Rel(x)}}}\mid \leq 1$, then by definition the coefficient $m_x$ of $x$ is zero. 
\end{proof}

\begin{Thm}
Let $B$ be a cluster tilted algebra of type $\mathbb D_n$ or $\widetilde{\mathbb D_n}$ and $(S,\tau)$ its geometric realisation. Then $$\operatorname{dim}_k \operatorname{HH}^1(B)= \mid \triangle_{\textrm{NRel(p,q)}}\mid +m_p +m_q-m_{p,q}$$
where $\triangle_{\textrm{NRel(p,q)}}$ is the subset of internal non self-folded triangles which are not related to the puncture $p$ or $q$ and $m_{p,q}$ is equal to one if and only if there exists a triangle $\triangle\in \triangle_{\textrm{Rel(p)}}\cap \triangle_{\textrm{Rel(q)}}$.
\end{Thm}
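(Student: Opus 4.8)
The plan is to derive Theorem B from Theorem A and then to evaluate the resulting potential invariant $N_W$ directly on the triangulation, the two combinatorial inputs being Lemma~\ref{triangles-related} and Lemma~\ref{coefficient}. A cluster tilted algebra of type $\mathbb D_n$ is representation-finite, hence cyclically oriented, and one of type $\widetilde{\mathbb D_n}$ is tame of euclidean type $\widetilde{\mathbb D}$; in either case Theorem A applies (cyclically oriented case, \ref{subsec:ProofMain}; tame case, \ref{subsec:TameCase}) and gives $\operatorname{dim}_k\operatorname{HH}^1(B)=N_W$. Writing $B=\mathcal{J}(Q_{\mathbb T},W_{\mathbb T})$ with $(Q_{\mathbb T},W_{\mathbb T})$ the reduced quiver with potential of $(S,\tau)$, and using that for cluster tilted algebras of these types the potential invariant does not depend on the potential chosen to realize $B$ as a jacobian algebra (cf.\ the Corollary of \S\ref{subsec:CycArrowEquiv} and the Corollary ``Invariants are invariant''), $N_W$ equals the number $N_{W_{\mathbb T}}$ of $\sim$-classes of cycles of $W_{\mathbb T}$. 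Thus it suffices to show
\[ N_{W_{\mathbb T}}=|\triangle_{\textrm{NRel(p,q)}}|+m_p+m_q-m_{p,q}, \]
where for type $\mathbb D_n$ one reads $m_q=m_{p,q}=0$.

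First I would count the $\sim_{\widehat W(\mathbb T)}$-classes of cycles of the \emph{unreduced} potential $\widehat W(\mathbb T)=\sum_\triangle C_\triangle+C_p+C_q+C_{pq}$. By the first point of Remark~\ref{sum} no two triangle-cycles share an arrow, so every $\approx_{\widehat W(\mathbb T)}$-chain between two triangle-cycles passes through $C_p$, $C_q$ or $C_{pq}$; combined with Lemma~\ref{triangles-related}, this shows that $\triangle\mapsto[C_\triangle]$ identifies the $\sim$-classes of internal non-self-folded triangles with the $\sim_{\widehat W(\mathbb T)}$-classes of cycles that contain a triangle-cycle. Using the partition $\triangle_{\textrm{Rel(p,q)}}\sqcup\triangle_{\textrm{NRel(p,q)}}$ of Remark~\ref{sum}, the classes coming from $\triangle_{\textrm{NRel(p,q)}}$ are exactly the singletons $\{C_\triangle\}$ (each such $\triangle$ being a block of type II whose $3$-cycle meets no puncture-cycle), so they contribute $|\triangle_{\textrm{NRel(p,q)}}|$ classes; every remaining cycle of $\widehat W(\mathbb T)$ lies in a class containing $C_p$, $C_q$ or $C_{pq}$, and when a block of type V is present all of $C_p$, $C_q$, $C_{pq}$ and the type-V triangle-cycle form a single isolated class, as in the proof of Lemma~\ref{triangles-related}.

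Next I would pass from $\widehat W(\mathbb T)$ to $W_{\mathbb T}$. The Derksen--Weyman--Zelevinsky splitting acts only near a puncture $x$ with $\operatorname{val}_\tau(x)=2$, and by Lemma~\ref{coefficient} the cycle $C_x$ survives, up to right-equivalence, as a genuine cycle of $W_{\mathbb T}$ exactly when $m_x=1$; when $m_x=0$ one has $|\overline{\triangle_{\textrm{Rel(x)}}}|\le 1$, and running through the configurations $A$, $B$, $C$ of Figure~\ref{Val2} and the blocks of type IIIa, IIIb, V, VI one checks that the unique triangle possibly related to $x$ carries no surviving cycle of its own, so that the $x$-class disappears without splitting. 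Since the puncture-free singleton classes are untouched by the splitting, after reduction the puncture-free part still contributes $|\triangle_{\textrm{NRel(p,q)}}|$ classes, while the puncture-related part contributes a surviving $p$-class iff $m_p=1$ and a surviving $q$-class iff $m_q=1$; these two coincide precisely when some internal non-self-folded triangle is related to both $p$ and $q$, that is, when $m_{p,q}=1$ (the type-V situation being the degenerate case $m_p=m_q=m_{p,q}=1$). By inclusion--exclusion the puncture-related part contributes $m_p+m_q-m_{p,q}$ classes, and adding the two contributions gives the formula.

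The hard part will be this last step: controlling how the splitting reshapes the equivalence classes of cycles around a valency-$2$ puncture, in particular ruling out both a split into several surviving classes (prevented by $|\overline{\triangle_{\textrm{Rel(x)}}}|\le 1$ when $m_x=0$) and the survival of an ``orphan'' triangle-cycle related to a puncture yet counted in no class of the formula, since this is exactly what pins the correction to $N_{\widehat W(\mathbb T)}$ down to $+\,m_p+m_q-m_{p,q}$. The first two steps are essentially formal consequences of Theorem A together with Lemmas~\ref{triangles-related} and~\ref{coefficient}.
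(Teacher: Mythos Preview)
Your proposal is correct and follows essentially the same approach as the paper: both reduce to showing $N_{W(\mathbb T)}=|\triangle_{\textrm{NRel}(p,q)}|+m_p+m_q-m_{p,q}$ via Theorem~A, split the unreduced potential into the puncture-free part (contributing $|\triangle_{\textrm{NRel}(p,q)}|$ singleton classes by Remark~\ref{sum} and Lemma~\ref{triangles-related}) and the puncture-related part $\widehat W(\mathbb T)_{p,q}$, and then analyse the latter under reduction using Lemma~\ref{coefficient}, treating the type~V block as the degenerate case and otherwise obtaining $m_p+m_q-m_{p,q}$ by inclusion--exclusion. Your identification of the delicate step---controlling how the splitting near a valency-$2$ puncture affects the equivalence classes---matches exactly where the paper spends its effort (the claim that $\varphi(C_\triangle)\in[\varphi(C_x)]$ for every $\triangle\in\triangle_{\textrm{Rel}(x)}$ when $C_{p,q}=0$).
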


\begin{proof}
Let  $N_{W(\mathbb T)}$ be the number of indecomposable direct summands of the potential $W(\mathbb T)$ and $N_{\widehat W(\mathbb T)}$ be the number of indecomposable direct summands of the unreduced potential $\widehat W(\mathbb T)$. We claim that $N_{W(\mathbb T)}=\mid \triangle_{\textrm{NRel(p,q)}} \mid +m_p+m_q-m_{p,q}$.

By Remark \ref{sum}, $$\widehat{W}(\mathbb T)=\sum_{\triangle\textrm{ of type II,  IV or V}}C_{\triangle} + C_p+C_q+C_{pq}= \sum\limits_{\triangle\in\triangle_{\text{NRel}(p,q)}} C_{\triangle} + \widehat W(\mathbb T)_{p,q}$$

where $\widehat W(\mathbb T)_{p,q}=\sum\limits_{\triangle\in\triangle_{\text{Rel}(p,q)}} C_{\triangle} + C_p+C_q+C_{pq}.$

Observe that each triangle $\triangle\in \triangle_{\textrm{Nrel(p,q)}}$ induces an equivalence class of cycles $[C_\triangle]_{\widehat W(\mathbb T)}$ in $\widehat W(\mathbb T)$ of cardinality one, otherwise there exists a cycle $C$ in $\widehat W(\mathbb T)$ such that $C_{\triangle}\approx_{W(\mathbb T)} C$. Since no pair of 3-cycle $\triangle$ and $\triangle'$ of type II, IV or V shares arrows, then $C$ is one of the following options: $C_p$, $C_q$ or $C_{pq}$, and as consequence $\triangle$ is related to a puncture, which is a contradiction. Moreover, observe that in case one needs to apply a reduction to $\widehat W(\mathbb T)$, no element of the sum $\sum\limits_{\triangle\in\triangle_{\text{NRel}(p,q)}} C_{\triangle}$ is affected. Therefore, $N_{W(\mathbb T)}=\mid \triangle_{\textrm{NRel(p,q)}}\mid + N'_{W(\mathbb T)}$, where $N'_{W(\mathbb T)}$ is the number of indecomposable direct summands in the reduced part of  $\widehat W(\mathbb T)_{p,q}$. Denote by $W(\mathbb T)_{p,q}$ be the reduced part of $\widehat W(\mathbb T)_{p,q}$.

Observe that if $C_{p,q}\neq 0$, then $\widehat W(\mathbb T)_{p,q}$ is already reduced and moreover $W(\mathbb T)_{p,q}$ is the sum of the four 3-cycles in the block of type V, therefore $N'_{W(\mathbb T)}=1$ and by definition of the coefficient of a puncture $m_p+m_q-m_{p,q}$ as well, as we claim.

Suppose $C_{p,q}=0$, in this case $\widehat W_{\mathbb T}$ is not necessarily reduced. Denote by $\varphi$ the isomorphims of $k$-algebras induced by the Splitting Theorem. Suppose $\varphi (C_x)$ is right-equivalent to a cycle in $W(\mathbb T)$, we claim that $\varphi (C_\triangle)\in [\varphi (C_x)]$ for every $\triangle\in \triangle_{\textrm{Rel(x)}}$. Suppose $\varphi (C_x)=C_x$ then $\varphi (C_\triangle)=C_\triangle$ for every $\triangle\in\triangle_{\textrm{Rel}(x)}$ and by Lemma \ref{triangles-related} $\{C_{\triangle}\mid \triangle_{\textrm{Rel}(x)}\}\subset[C_x]$. Now suppose $\varphi (C_x)\neq C_x$, then $x$ is involved in a configuration of type A or C, see Figure \ref{Val2}, therefore $\varphi(C_x)=\varphi(C_{\triangle})$ for every triangle in $\triangle_{\textrm{Rel}(x)}$. As consequence, observe that $m_{p,q}\neq 1$ if and only if $[\varphi (C_q)]\cap[\varphi (C_q)\neq \emptyset$, therefore by Lemma \ref{coefficient}, $N_{W(T)}$ is counting the number of equivalence classes in the reduced part of $\widehat W_{\mathbb T}$ as we claim. 
\end{proof}

\subsection{Geometric interpretation of admissible cuts}
We give a geometric interpretation of an important definition in this work: \emph{admissible cuts of quivers}. It is known that in the unpunctured case the quiver with potential of any triangulation admits cuts yielding algebras of global dimension at most 2, see \cite{DRS12}. In the punctured case, some triangulations do not admit cuts, and even when they do, the global dimension of the corresponding algebra may exceed 2, in \cite{ALFP16} there is a combinatorial characterisation of each of these two situations for ideal valency $\geq 3$-triangulations. In this work, we show an explicit construction of admissible cuts of quivers of Dynkin type $\mathbb D_n$ and $\widetilde{\mathbb D}_n$ yielding algebras of global dimension at most 2 arising for ideal valency $1$-triangulations.

Following \cite{DRS12}, an \emph{admissible cut of the unpunctured triangulated surface $(S,M,\mathbb T)$} is a geometric object which corresponds to cuts of the quiver $Q_{\mathbb T}$. This construction consists in choosing exactly one angle in each internal triangle, and then the set consisting of the opposite arrows to each chosen angle is an \emph{admissible cut of $Q_{\mathbb T}$}. Denote by $(S, M, \mathbb T, \mathbb T^{\dagger})$ the admissible cut of the unpunctured triangulated surface $(S, M, \mathbb T)$ and by $C_{\mathbb T^{\dagger}}$ be the induced algebra of the admissible cut.

Since any internal triangle in the decomposition of puzzle pieces induced by any ideal triangulation of an unpunctured surface is of type II, the definition given in \cite{DRS12} is not enough in the punctured case, however this definition is easily generalised to the self-folded internal triangles and non self-folded internal triangles of type IV and V. Notice that any self-folded internal triangles, which gives rise a cycle in the quiver, shares a side with an internal triangle of type IV or V, therefore it is enough to describe cuts in internal triangles of type IV or V. In a similar way, choosing an angle in a internal triangle of type IV or V implies to consider the opposite arrow as well, but in this case, this selection implies to choose the closest angle of the self-folded internal triangle, and as consequence the two arrows facing each angle are elements of the admissible cut of $Q(\mathbb T)$, as depicted in Figure \ref{cuts}. Therefore an \emph{admissible cut of a triangulated surface} is a set of angles, one for each non self-folded internal triangle.

\begin{figure}[ht!]
\centering
\begin{tikzpicture}[scale=0.5]
\draw (-4,3.5)--(17.5,3.5)
(-4,3.5)-- (-4,-9)
(-4,-3.5)--(17.5,-3.5)
(17.5,3.5)--(17.5,-9)
(-4,-9)--(17.5,-9)
(3.77,3.5)--(3.77,-9)
(8.25, 3.5)--(8.25,-9)
(12.75,3.5)--(12.75,-9);
\draw[draw=black] (6,-3).. controls (3.5, -2)  and  (3.5, 2) ..(6.01,3);
\draw[draw=black] (6,-3).. controls (8.5,-2) and (8.5, 2)..(6.01,3);
\draw[draw=black, fill=white] (6,-3).. controls (5.2,-2) and (4.3, 1.3).. (6.01, 1.5);
\draw[draw=black, fill=white] (6,-3).. controls (6.8,-2) and (7.7, 1.3).. (6.01, 1.5);
\draw[draw=black] (6,-3) -- (6,0);
\filldraw[black] (6,0) circle (2pt)
(6,3) circle (2pt)
(6,-3) circle (2pt);
\draw [line width=1.5pt, gray] (6,-2.2) arc (90:140:1cm);
\draw[draw=black] (10.5,-3).. controls (8, -2)  and  (8, 2) ..(10.51,3);
\draw[draw=black] (10.5,-3).. controls (13,-2) and (13, 2)..(10.51,3);
\draw[draw=black, fill=white] (10.5,-3).. controls (9.7,-2) and (8.8, 1.3).. (10.51, 1.5);
\draw[draw=black, fill=white] (10.5,-3).. controls (11.3,-2) and (12.2, 1.3).. (10.51, 1.5);
\draw[draw=black] (10.5,-3) -- (10.5,0);
\filldraw[black] (10.5,0) circle (2pt)
(10.5,3) circle (2pt)
(10.5,-3) circle (2pt);
\draw [line width=1.5pt, gray] (11.25,-2.5) arc (43:90:1cm);
\draw[draw=black] (15,-3).. controls (12.5, -2)  and  (12.5, 2) ..(15,3);
\draw[draw=black] (15,-3).. controls (17.5,-2) and (17.5, 2)..(15,3);
\draw[draw=black, fill=white] (15,-3).. controls (14.2,-2) and (13.2, 1.3).. (15, 1.5);
\draw[draw=black, fill=white] (15,-3).. controls (15.8,-2) and (16.7, 1.3).. (15, 1.5);
\draw[draw=black] (15,-3) -- (15,0);
\filldraw[black] (15,0) circle (2pt)
(15,3) circle (2pt)
(15,-3) circle (2pt);
\draw [line width=1.5pt, gray] (14.2,2.5) arc (230:320:1cm);
\draw (0,-0.5) ellipse (3.5cm and 2cm);
\filldraw (0,-2.5) circle (2pt);
\draw[draw=black, scale=0.7,rotate=-45, xshift=1cm, yshift=1cm] (1.5,-3.5).. controls (1.2,-3.5) and (-0.2, 0.8).. (1.51, 0.8);
\draw[draw=black, scale=0.7,rotate=-45, xshift=1cm, yshift=1cm] (1.5,-3.5).. controls (1.8,-3.5) and (3.2, 0.8).. (1.51, 0.8);
\draw[draw=black, scale=0.7,rotate=45, xshift=-4cm, yshift=1cm] (1.5,-3.5).. controls (1.2,-3.5) and (-0.2, 0.8).. (1.51, 0.8);
\draw[draw=black, scale=0.7,rotate=45, xshift=-4cm, yshift=1cm] (1.5,-3.5).. controls (1.8,-3.5) and (3.2, 0.8).. (1.51, 0.8);
\draw[draw=black] (0,-2.5) -- (-1.5,-1);
\draw[draw=black] (0,-2.5) -- (1.5,-1);
\filldraw[black] (-1.5,-1) circle (2pt)
(1.5,-1) circle (2pt);
\draw [line width=1.5pt, gray] (0.73,-1.8) arc (45:135:1cm);
 \matrix (m)[matrix of math nodes, row sep=1.5em,column sep=1em,ampersand replacement=\&, yshift=-3cm]
{  \& \bullet  \&   \\
  \Box  \&  \& \Box   \\
  \Box \&  \& \Box  \\
};
\path[-stealth]
	(m-1-2) edge (m-2-1) edge (m-2-3)
	(m-3-1) edge (m-1-2)
	(m-3-3) edge(m-1-2);
 \matrix (n)[matrix of math nodes, row sep=1.5em,column sep=1em,ampersand replacement=\&, xshift=3cm, yshift=-3cm]
{ \& \Box \&   \\
  \bullet \&  \& \bullet \\
 \& \Box \&  \\
};
\path[-stealth]
	(n-1-2) edge (n-2-3)
	(n-3-2) edge (n-2-3)
	(n-2-3) edge (n-2-1);
 \matrix (l)[matrix of math nodes, row sep=1.5em,column sep=1em,ampersand replacement=\&, xshift=5.25cm, yshift=-3cm]
{ \& \Box \&  \\
    \bullet \& \& \bullet  \\
   \& \Box \& \\
};
\path[-stealth]
	(l-2-1) edge (l-1-2) edge (l-3-2)
	(l-2-3) edge (l-2-1);
 \matrix (o)[matrix of math nodes, row sep=1.5em,column sep=1em,ampersand replacement=\&, xshift=7.5cm, yshift=-3cm]
{ \& \Box \&  \\
    \bullet \& \& \bullet  \\
   \& \Box \& \\
};
\path[-stealth]
	(o-2-1) edge (o-1-2) edge (o-3-2)
	(o-1-2) edge (o-2-3)
	(o-3-2) edge (o-2-3);
\end{tikzpicture}
\caption{Cuts of Blocks}\label{cuts}
\end{figure}
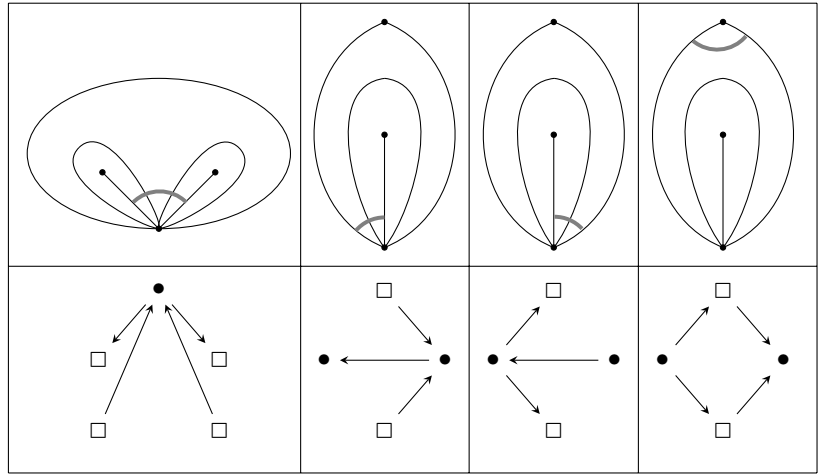

\begin{Prop}
Let $(S,M, \mathbb T)$ be an ideal valency $1$-triangulation of a punctured polygon and $(S,M,\mathbb T, \mathbb T^{\dagger})$ an admissible cut of $(S,M,\mathbb T)$. Then the algebra $C_{\mathbb T^\dagger}$ induced by the admissible cut $(S,M,\mathbb T, \mathbb T^{\dagger})$ is an admissible cut of $B_{\mathbb T}$. Moreover, $C_{\mathbb T^\dagger}$ is anof global dimension at most two.
\end{Prop}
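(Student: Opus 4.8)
The plan is to establish the two assertions separately: first that $\mathbb T^{\dagger}$, read as a set of arrows of $Q_{\mathbb T}$, meets each cyclic summand of the reduced potential $W_{\mathbb T}$ in exactly one arrow, so that it is an admissible cut of the quiver with potential $(Q_{\mathbb T},W_{\mathbb T})$ presenting $B_{\mathbb T}$ and $C_{\mathbb T^{\dagger}}=\mathcal J(Q_{\mathbb T},W_{\mathbb T})/\langle\mathbb T^{\dagger}\rangle$ is the corresponding algebra of the cut; and second that this algebra has global dimension at most $2$.

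For the first assertion, I would start from the fact that, since every puncture has valency $1\neq 2$, the unreduced quiver with potential $(\widehat Q_{\mathbb T},\widehat W(\mathbb T))$ is already reduced, so $W_{\mathbb T}=\widehat W(\mathbb T)$. A valency $1$ puncture lies in a self-folded triangle, never in a digon, so no block of type~IV occurs; moreover $C_p=C_q=0$ in $\widehat W(\mathbb T)$, and a self-folded internal triangle gives rise to a cycle only when it is glued to a block of type~V. Hence, by Remark~\ref{sum}, $W_{\mathbb T}$ is a sum of $3$-cycles, namely one cycle $C_{\triangle}$ for each non-self-folded internal triangle of type~II or~V, together with the $3$-cycle $C_{pq}$ when a block of type~V is present; and, since distinct blocks are glued only at vertices of type $\bullet$ and within a block of type~V the cycles $C_{\triangle}$ and $C_{pq}$ involve disjoint sets of arrows, these cyclic summands are pairwise arrow-disjoint. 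Now the geometric cut removes, in each non-self-folded internal triangle of type~II, the arrow opposite to the chosen angle, which is one arrow of $C_{\triangle}$; and in a block of type~V the choice of one angle of the type~V triangle, together with the forced choice of the closest angle of the self-folded triangle glued to it, removes — as recorded in Figure~\ref{cuts} — exactly one arrow of $C_{\triangle}$ and exactly one arrow of $C_{pq}$. Therefore $\mathbb T^{\dagger}$ contains exactly one arrow from each cyclic summand of $W_{\mathbb T}$, which is precisely the condition to be an admissible cut.

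For the second assertion, I would use that, because the cyclic summands of $W_{\mathbb T}$ are pairwise arrow-disjoint $3$-cycles, each cut arrow $\alpha\in\mathbb T^{\dagger}$ lies on a unique $3$-cycle $\alpha\beta\gamma$ of $W_{\mathbb T}$, so $\partial_{\alpha}W_{\mathbb T}=\beta\gamma$ is a single path of length $2$; consequently $C_{\mathbb T^{\dagger}}$ is a monomial algebra whose minimal relations are exactly these length-$2$ paths, one for each cut arrow. For a monomial algebra all of whose relations have length $2$, the global dimension is at most $2$ as soon as there is no path $\beta\gamma\delta$ of length $3$ in the quiver of $C_{\mathbb T^{\dagger}}$ for which both $\beta\gamma$ and $\gamma\delta$ are minimal relations — such a configuration is exactly what would produce a nonzero $\operatorname{Ext}^{3}$ between simple modules in Bardzell's minimal resolution. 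If such a path existed, then $\beta\gamma=\partial_{c}W_{\mathbb T}$ and $\gamma\delta=\partial_{c'}W_{\mathbb T}$ for cut arrows $c,c'$, so $c\beta\gamma$ and $c'\gamma\delta$ would be $3$-cycles of $W_{\mathbb T}$ sharing the arrow $\gamma$; by arrow-disjointness they would coincide as cycles, and since an admissible cut meets a cycle in exactly one arrow, $c=c'$; but then $\beta\gamma=\partial_{c}W_{\mathbb T}=\partial_{c'}W_{\mathbb T}=\gamma\delta$ as paths, forcing $\beta=\gamma$, which is absurd since $c\beta\gamma$ is a genuine $3$-cycle. Hence no such path exists, and $C_{\mathbb T^{\dagger}}$ has global dimension at most $2$.

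The main obstacle is the bookkeeping in the second paragraph: one has to pin down precisely the list of cyclic summands of $W_{\mathbb T}$ for a valency $1$-triangulation — in particular to check that a self-folded triangle glued to a block of type~V adds no further cycle and that the geometric cut meets $C_{\triangle}$ and $C_{pq}$ inside such a block in a single arrow each — after which both the admissibility of the cut and the bound on the global dimension follow formally from the pairwise arrow-disjointness of the $3$-cycles in $W_{\mathbb T}$.
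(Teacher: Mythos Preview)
Your argument rests on two structural claims about the potential $W_{\mathbb T}$ of a valency-$1$ triangulation that are both false, and the errors cascade.

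First, blocks of type~IV do occur. A valency-$1$ puncture sits at the end of the radius of a self-folded triangle, and the loop of that self-folded triangle is one side of an ordinary triangle on the outside; when the other two sides of that ordinary triangle are internal arcs, the pair (self-folded triangle $+$ ordinary triangle) is precisely the once-punctured digon whose quiver block is type~IV. Your sentence ``a valency-$1$ puncture lies in a self-folded triangle, never in a digon'' conflates two different things: the puncture is a vertex of the self-folded triangle, but the self-folded triangle is enclosed in the digon. Relatedly, the claim $C_p=C_q=0$ is not correct: by the paper's own Lemma~\ref{coefficient}, $C_x$ vanishes for a valency-$1$ puncture only when $x$ lies in a piece of type~IIIa or IIIb, not when it lies in a piece of type~IV or~V. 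In block~IV the potential contributes the two $3$-cycles $\alpha\beta\gamma$ and $\alpha\beta'\gamma'$, which \emph{share} the arrow $\alpha$; in block~V the potential contributes four $3$-cycles (see the proof of Lemma~\ref{triangles-related}), again with shared arrows. So the cyclic summands of $W_{\mathbb T}$ are not pairwise arrow-disjoint.

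This breaks both halves of your proof. For the admissibility of the cut you can no longer read it off from ``one arrow per disjoint $3$-cycle''; one must check, block by block, that each of the (possibly overlapping) chordless cycles inside a block is hit exactly once by the prescribed geometric cut --- this is what the paper does by inspection of Figure~\ref{cuts}. For the global dimension bound, your monomial argument collapses: when the cut removes the shared arrow $\alpha$ in a block of type~IV (the third column of Figure~\ref{cuts}), one has $\partial_\alpha W_{\mathbb T}=\beta\gamma+\beta'\gamma'$, a genuine binomial relation, so $C_{\mathbb T^\dagger}$ is not monomial and Bardzell's criterion as you stated it does not apply. The paper's argument is different and does not need monomiality: it observes that all relations (monomial or binomial) are supported inside a single block, and since blocks are glued only at the $\bullet$-vertices --- never along arrows or at the $\Box$-vertices --- two relations from different blocks can never be consecutive, while inside a single cut block one checks directly that no two relations overlap. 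That is what forces $\operatorname{gl.dim}C_{\mathbb T^\dagger}\leq 2$.
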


\begin{proof}
Let $(S,M,\mathbb T)$ be an ideal valency $1$-triangulation of a punctured n-polygon and $(S,M,\mathbb T, \mathbb T^{\dagger})$ an admissible cut of $(S,M,\mathbb T)$. As we mention before, the unreduced quiver with potential $(\widehat{Q}(\mathbb T), \widehat{W}(\mathbb T))$ is already reduced, and moreover, in this case, any oriented cycle in $Q(\mathbb T)$ is fully contained in a block of type II, IV or V, and therefore also each term of the potential $W(\mathbb T)$. To prove that $C_{\mathbb T^\dagger}$ is an admissible cut it is enough to observe that any chordless oriented cycle in each block is cut just once, see Figure \ref{cuts}.

Because the quiver $Q(\mathbb T)$ is built by gluing blocks corresponding to each kind of internal triangles and these identifications are allowed only in vertices of type $\bullet$, by definition the quiver $Q(\mathbb T^{\dagger})$ is also built by gluing the cut blocks, and the induced relations are either monomial relations or binomial relations fully contained in each cut block, therefore there are no consecutive relations, and as a consequence the global dimension of $C_{\mathbb T^\dagger}$ is at most two.
\end{proof}

\section*{Aknowledgements}
The first named author gratefully acknowledges partial support from NSERC of Canada. The third named author acknowledges partial support from ANPCyT, Argentina. The fourth named author is  deeply grateful to D\'epartement de Math\'ematiques of the Universit\'e de Sherbrooke and Ibrahim Assem for supporting and providing and warming and ideal working conditions during her stay at Sherbrooke, she is now a Royal Society Newton Fellow and part of this work was made under this fellowship.

\bibliographystyle{acm}
\bibliography{biblio.bib}

\begin{thebibliography}{10}

\bibitem{ALFP16}
{\sc Amiot, C., Labardini-Fragoso, D., and Plamondon, P.-G.}
\newblock Derived invariants for surface cut algebras ii: the punctured case.
\newblock {\em arXiv preprint arXiv:1606.07364\/} (2016).

\bibitem{ABCP09}
{\sc Assem, I., Br{\"u}stle, T., Charbonneau-Jodoin, G., and Plamondon, P.-G.}
\newblock Gentle algebras arising from surface triangulations.
\newblock {\em Algebra Number Theory 4}, 2 (2010), 201--229.

\bibitem{ABS08}
{\sc Assem, I., Br{\"u}stle, T., and Schiffler, R.}
\newblock Cluster-tilted algebras as trivial extensions.
\newblock {\em Bull. Lond. Math. Soc. 40}, 1 (2008), 151--162.

\bibitem{ABDLS-19}
{\sc {Assem}, I., {Bustamante}, J.~C., {Dionne}, J., {Le Meur}, P., and
  {Smith}, D.}
\newblock {Representation theory of partial relation extensions}.
\newblock {\em Coll. Math. 155}, 2 (2019.), 157--186.

\bibitem{ABIS13}
{\sc Assem, I., Bustamante, J.~C., Igusa, K., and Schiffler, R.}
\newblock The first {H}ochschild cohomology group of a cluster tilted algebra
  revisited.
\newblock {\em Internat. J. Algebra Comput. 23}, 4 (2013), 729--744.

\bibitem{AGST-16}
{\sc Assem, I., Gatica, M.~A., Schiffler, R., and Taillefer, R.}
\newblock Hochschild cohomology of relation extension algebras.
\newblock {\em J. Pure Appl. Algebra 220}, 7 (2016), 2471--2499.

\bibitem{AMP01}
{\sc Assem, I., Marcos, E., and de~la Pe{\~n}a., J.}
\newblock {T}he {S}imple {C}onnectedness or a {T}ame {T}ilted {A}lgebra.
\newblock {\em Journal of Algebra 237\/} (2001), 647--656.

\bibitem{AR09}
{\sc Assem, I., and Redondo, M.~J.}
\newblock The first {H}ochschild cohomology group of a {S}churian
  cluster-tilted algebra.
\newblock {\em Manuscripta Math. 128}, 3 (2009), 373--388.

\bibitem{ARS15}
{\sc Assem, I., Redondo, M.~J., and Schiffler, R.}
\newblock On the first {H}ochschild cohomology group of a cluster-tilted
  algebra.
\newblock {\em Algebr. Represent. Theory 18}, 6 (2015), 1547--1576.

\bibitem{ASS06}
{\sc Assem, I., Simson, D., and Skowro{\'n}ski, A.}
\newblock {\em Elements of the representation theory of associative algebras 1
  : Techniques of representation theory}, vol.~65 of {\em London Mathematical
  Society Student Texts}.
\newblock Cambridge University Press, Cambridge, 2006.
\newblock Techniques of representation theory.

\bibitem{AS88}
{\sc Assem, I., and Skowro{\'n}ski, A.}
\newblock On some classes of simply connected algebras.
\newblock {\em Proc. London Math. Soc. (3) 56}, 3 (1988), 417--450.

\bibitem{BFPPT10}
{\sc Barot, M., Fern{\'a}ndez, E., Platzeck, M.~I., Pratti, N.~I., and Trepode,
  S.}
\newblock From iterated tilted algebras to cluster-tilted algebras.
\newblock {\em Adv. Math. 223}, 4 (2010), 1468--1494.

\bibitem{BGZ06}
{\sc Barot, M., Geiss, C., and Zelevinsky, A.}
\newblock Cluster algebras of finite type and positive symmetrizable matrices.
\newblock {\em J. London Math. Soc. (2) 73}, 3 (2006), 545--564.

\bibitem{BT13}
{\sc Barot, M., and Trepode, S.}
\newblock Cluster tilted algebras with a cyclically oriented quiver.
\newblock {\em Comm. Algebra 41}, 10 (2013), 3613--3628.

\bibitem{Bo83}
{\sc Bongartz, K.}
\newblock Algebras and quadratic forms.
\newblock {\em J. London Math. Soc. (2) 28}, 3 (1983), 461--469.

\bibitem{BG82}
{\sc Bongartz, K., and Gabriel, P.}
\newblock Covering spaces in representation-theory.
\newblock {\em Invent. Math. 65}, 3 (1981/82), 331--378.

\bibitem{BMRRT06}
{\sc Buan, A.~B., Marsh, R., Reineke, M., Reiten, I., and Todorov, G.}
\newblock Tilting theory and cluster combinatorics.
\newblock {\em Adv. Math. 204}, 2 (2006), 572--618.

\bibitem{BMR06}
{\sc Buan, A.~B., Marsh, R.~J., and Reiten, I.}
\newblock Cluster-tilted algebras of finite representation type.
\newblock {\em J. Algebra 306}, 2 (2006), 412--431.

\bibitem{BMR07}
{\sc Buan, A.~B., Marsh, R.~J., and Reiten, I.}
\newblock Cluster-tilted algebras.
\newblock {\em Trans. Amer. Math. Soc. 359}, 1 (2007), 323--332 (electronic).

\bibitem{CSS06}
{\sc Caldero, P., Chapoton, F., and Schiffler, R.}
\newblock Quivers with relations arising from clusters ({$A_n$} case).
\newblock {\em Trans. Amer. Math. Soc. 358}, 3 (2006), 1347--1364.

\bibitem{CSS18}
{\sc Chaparro, C., Schroll, S., and Solotar, A.}
\newblock On the lie algebra structure of the first hochschild cohomology of
  gentle algebras and brauer graph algebras.
\newblock {\em arXiv preprint arXiv:1811.02211\/} (2018).

\bibitem{DRS12}
{\sc David-Roesler, L., and Schiffler, R.}
\newblock Algebras from surfaces without punctures.
\newblock {\em J. Algebra 350\/} (2012), 218--244.

\bibitem{PS01}
{\sc de~la Pe{\~n}a, J.~A., and Saor{\'{\i}}n, M.}
\newblock On the first {H}ochschild cohomology group of an algebra.
\newblock {\em Manuscripta Math. 104}, 4 (2001), 431--442.

\bibitem{DWZ-08}
{\sc Derksen, H., Weyman, J., and Zelevinsky, A.}
\newblock Quivers with potentials and their representations. {I}. {M}utations.
\newblock {\em Selecta Math. (N.S.) 14}, 1 (2008), 59--119.

\bibitem{FST08}
{\sc Fomin, S., Shapiro, M., and Thurston, D.}
\newblock Cluster algebras and triangulated surfaces. {I}. {C}luster complexes.
\newblock {\em Acta Math. 201}, 1 (2008), 83--146.

\bibitem{Hap89}
{\sc Happel, D.}
\newblock {\em Hochschild cohomology of finite-dimensional algebras}, vol.~1404
  of {\em Lecture Notes in Math.}
\newblock Springer, Berlin, 1989, pp.~108--126.

\bibitem{K11}
{\sc Keller, B.}
\newblock Deformed {C}alabi-{Y}au completions.
\newblock {\em J. Reine Angew. Math. 654\/} (2011), 125--180.
\newblock With an appendix by Michel Van den Bergh.

\bibitem{Labardini-09}
{\sc Labardini-Fragoso, D.}
\newblock Quivers with potentials associated to triangulated surfaces.
\newblock {\em Proc. Lond. Math. Soc. (3) 98}, 3 (2009), 797--839.

\bibitem{LeMeurTop09}
{\sc Le~Meur, P.}
\newblock Topological invariants of piecewise hereditary algebras.
\newblock {\em Trans. Amer. Math. Soc. 363}, 4 (2011), 2143--2170.

\bibitem{Skow92}
{\sc Skowro{\'n}ski, A.}
\newblock Simply connected algebras and {H}ochschild cohomologies [ {MR}1206961
  (94e:16016)].
\newblock In {\em Representations of algebras ({O}ttawa, {ON}, 1992)}, vol.~14
  of {\em CMS Conf. Proc.} Amer. Math. Soc., Providence, RI, 1993,
  pp.~431--447.

\bibitem{Val15}
{\sc Valdivieso-Diaz, Y.}
\newblock Hochschild cohomology of jacobian algebras from unpunctured surfaces:
  A geometric computation.
\newblock {\em arXiv preprint arXiv:1512.00738\/} (2015).

\end{thebibliography}

\end{document}